\documentclass[10pt]{article}
\usepackage{subfigure}
\usepackage{booktabs,multirow}
\usepackage{amssymb,amsmath,amsthm}
\usepackage[numbers]{natbib}
\usepackage{tikz}
\usepackage{mathrsfs}

\usepackage[all]{xy}
\usepackage{float}
\usepackage{graphicx}
\usepackage[overload]{empheq}
\usepackage{algorithmic}

\usepackage{framed} 
\usepackage{lastpage}
\usepackage[algoruled]{algorithm2e}
\usepackage{color,soul}
\usepackage{authblk}

\usepackage[pagebackref=false,colorlinks,linkcolor=blue,citecolor=magenta]{hyperref}
\usepackage[nottoc]{tocbibind}
\newcommand{\bm}{\mathbf}
\let\oldnl\nl% Store \nl in \oldnl
\newcommand{\nonl}{\renewcommand{\nl}{\let\nl\oldnl}}

\newtheorem{theorem}{Theorem}[section]
\newtheorem{definition}{Definition}[section]
\newtheorem{corollary}{Corollary}[theorem]
\newtheorem{lemma}[theorem]{Lemma}
\newtheorem{assumption}{Assumption}[section]
\newtheorem{remark}{Remark}[section]

\title{A projected descent method for minimization of weakly semismooth functions over polyhedral sets}

\author[1]{Morteza Maleknia}
\author[2]{Majid Soleimani-damaneh}

\affil[1]{Department of
	Mathematical Sciences, Isfahan University of Technology, Isfahan 84156-83111, Iran}

\affil[2]{School of Mathematics, Statistics and Computer Science, College of Science, University of
	Tehran, Tehran, Iran}

\affil[ ]{\textit {\{m.maleknia@iut.ac.ir,m.soleimani.d@ut.ac.ir\}}}

\begin{document}

\maketitle

	\section*{Abstract}
	This study develops a projection-based descent method for minimizing weakly semismooth functions over polyhedral sets, taking an initial step toward extending projection-based descent algorithms to nonsmooth, nonconvex optimization problems.
	At each iteration, the method considers an inner approximation of the Clarke $\varepsilon$-subdifferential at the current iterate over the feasible region and uses its least-norm element to generate a search direction.
	An exponential limited backtracking line search is then developed to assess the quality of the generated direction, while a projected variant of Mifflin's line search is proposed to identify nonredundant subgradients that enrich the Clarke $\varepsilon$-subdifferential approximation when necessary.
	To quantify stationarity, a characterization of stationary points based on the projection operator yields a computable optimality measure for the proposed method.
	The convergence properties of the proposed method are established under mild assumptions. If the algorithm generates infinitely many serious steps, we identify two subsequences such that every cluster point of each subsequence is stationary. If the number of serious steps is finite, we show that the final serious step generates a stationary point.
	Numerical experiments demonstrate the efficiency and broad applicability of the proposed method across a wide range of test problems, including applications in image denoising, multiobjective optimization, and data clustering. To extend the applicability of the proposed method to problems with general smooth nonlinear constraints, we propose a heuristic approach based on sequential linearization.

	\section{Introduction}
	The problem of minimizing a real-valued locally Lipschitz function over a closed subset of $\mathbb{R}^n$ is a challenging problem with broad applications in various fields. Following the introduction of the Clarke subdifferential \cite{Clarke1990}, considerable effort has been devoted to developing numerical algorithms that possess global convergence guarantees and admit termination criteria based on necessary optimality conditions. In particular, the concept of the Clarke $\varepsilon$-subdifferential \cite{Gold1} has played a significant role in constructing stabilized search directions and deriving computable optimality measures that serve as reliable termination criteria for nonsmooth optimization algorithms.
	
	Let us consider the constrained optimization problem
	\begin{equation}\label{Intro-main1}
		\min \,\, f(\bm x) \quad \text{s.t.} \quad g_i(\bm x)\leq 0, \quad i\in\{1,2,\ldots,m\},
	\end{equation}
	where $f:\mathbb{R}^n\to\mathbb{R}$ and $g_i:\mathbb{R}^n\to\mathbb{R}$, $i\in\{1,2,\ldots,m\}$, are nonsmooth, but locally Lipschitz. For ease of exposition, we denote the feasible region of problem~\eqref{Intro-main1} by 
	$$
	C:=\big\{\bm x\in\mathbb{R}^n:\ g_i(\bm x)\leq 0,\ i\in\{1,2,\ldots,m\}\big\}.
	$$
	A widely used strategy for handling the constraints in problem \eqref{Intro-main1} is the exact penalty method \cite{Bagirov2014}, which incorporates the constraint violations into the objective function through an exact penalty term. This reformulation gives rise to a sequence of unconstrained nonsmooth optimization problems that can then be solved by algorithms such as subgradient methods \cite{Amir_Beck_first_order,bagirov2012,Maleknia-Optimization}, bundle methods \cite{kiwielbook}, and gradient sampling methods \cite{Burke2005,GS-full}.
	Another approach, commonly used in bundle-type methods, is to incorporate the constraint functions into an auxiliary function, often referred to as the improvement function. At each iteration, based on local linearization techniques, the corresponding improvement function is minimized to obtain a feasible descent direction \cite{kiwielbook}. In the context of gradient sampling methods, a sequential quadratic programming framework has been developed to handle constrained problems \cite{Curtis2012}. When the objective function $f$ is quasi-differentiable and the constraints are linear, a derivative-free method based on discrete gradients was proposed in \cite{free-derivative}. After introducing slack variables for the inequality constraints, the method eliminates the constraints by expressing the nonbasic variables in terms of the basic variables.
	
	When the feasible region $C\subset\mathbb{R}^n$ is a closed convex set, the constraints can be handled by means of the orthogonal projection onto $C$. For many simple sets, including the Cartesian product of closed intervals, polyhedral sets, and ellipsoids, the projection step can be computed efficiently, making projection-based methods an attractive approach for constraint handling, especially in large-scale optimization problems \cite{large-scale-Newton}. The projected subgradient method \cite{shorbook} is one of the simplest approaches for minimizing a nonsmooth function over a closed convex set. It does not require solving a subproblem to compute a search direction and relies on a prescribed sequence of step sizes. A notable generalization of the projected subgradient method is the mirror descent method \cite{Amir_Beck_first_order}, which employs the Bregman distance to define the proximity term. Despite their simplicity, these methods are not descent methods and do not inherently provide a termination criterion based on necessary optimality conditions. Furthermore, their convergence guarantees are generally developed for convex optimization problems. Along this line of research, Kiwiel employed the projection technique within the bundle framework and developed a projection-based descent algorithm for nonsmooth convex optimization problems \cite{Kiwiel-projection}. The convergence analysis of that method, however, is restricted to convex objective functions. An attempt to extend projection-based descent methods to the minimization of nonconvex locally Lipschitz functions over bound-constrained problems was made in \cite{Karmitsa-projection}. However, establishing a convergence theory for the proposed method remained as a challenge.
	
	This study takes an initial step toward extending projection-based descent algorithms to nonsmooth, nonconvex optimization problems. To this end, we concentrate on the minimization problem
	\begin{equation}\label{Main-Intro}
		\min \,\, f(\bm x) \quad \text{s.t.} \quad \bm x\in C,
	\end{equation}
	where $f:\mathbb{R}^n\to\mathbb{R}$ is weakly semismooth and $C\subset\mathbb{R}^n$ is a nonempty, closed, and convex polyhedral set.
	At each iteration of the method, we consider an inner approximation of the Clarke $\varepsilon$-subdifferential of $f$ at the current point over the feasible region $C$, whose least-norm element provides a search direction.
	Since the  search directions are employed within a projected line search procedure, they are not required to be feasible.
	The algorithm combines two complementary line search procedures. An exponential limited backtracking line search is first used to check an Armijo-type sufficient decrease condition by projecting each trial point onto the feasible region. If this condition is satisfied, the algorithm takes a serious step.
	Otherwise, the method admits a null step, and a projected variant of Mifflin's line search \cite{kiwielbook} is invoked to compute a new nonredundant subgradient, which enriches the current approximation of the Clarke $\varepsilon$-subdifferential and enables the computation of a new search direction. 
	We prove that, whenever the objective function $f$ is weakly semismooth and the feasible region $C$ is a polyhedral set, the proposed projected variant of Mifflin's line search terminates after finitely many iterations with a nonredundant subgradient. We characterize the stationary points of problem~\eqref{Main-Intro} through the projection mapping, providing the proposed method with a computable optimality measure based on a necessary optimality condition. We establish the convergence properties of the proposed method under mild assumptions. In particular, when the algorithm generates infinitely many serious steps, we show that two subsequences of the generated iterates can be identified such that every cluster point of each subsequence is stationary. On the other hand, when only finitely many serious steps are generated, we prove that the iterate associated with the final serious step is stationary.

	%The global convergence properties of the proposed method are thoroughly studied, and we prove that every accumulation point of the generated sequence is a stationary point of problem~\eqref{Main-Intro}.
	
	Through  numerical experiments, we assess the practical performance of the proposed method. To this end, we first consider a collection of linearly constrained test problems to illustrate the main features of the proposed method. We also propose and apply a heuristic approach for handling smooth nonlinear constraints. At each iteration, this sequential approach retains the original objective function while replacing each nonlinear constraint with a linear approximation within a trust region. The applicability of the proposed method to large-scale optimization is demonstrated through an image processing problem. Furthermore, we apply the method to approximate the Pareto front of a nonsmooth, nonconvex multiobjective optimization problem. Finally, we consider a data clustering problem in which the cluster centroids are constrained to lie in a polyhedral set.
	
	The remainder of the paper is organized as follows. Section~\ref{Basic-Concepts} introduces the basic concepts and notation. Section~\ref{P-Projection} discusses projection onto polyhedral sets. Section~\ref{Stat-Proj} characterizes stationary points using the projection operator. Section~\ref{Deivation} presents the proposed method in detail. Section~\ref{Conv-Analysis} is devoted to the convergence analysis of the method.  Section~\ref{Numeric} reports the results of the numerical experiments, and Section~\ref{Conclusion} concludes the paper.

	\section{Basic Concepts}\label{Basic-Concepts}
	We denote  by $\mathbb{R}^n$ the $n$-dimensional Euclidean space, and the inner product of any two column vectors $\bm x, \bm y\in\mathbb{R}^n$ is given by $\bm x^T\bm y:=\sum_{i=1}^{n} x_i y_i$, which induces the Euclidean norm $\lVert \bm x\rVert:=\sqrt{\bm x^T \bm x}$. In addition, $\mathcal{B}_\varepsilon(\bm x)$ is the closed  ball  centered at $\bm x\in\mathbb{R}^n$ with radius $\varepsilon\geq0$, i.e.,
	$\mathcal{B}_\varepsilon(\bm x):=\{\bm y\in\mathbb{R}^n \,:\, \lVert\bm y-\bm x \rVert\leq \varepsilon\}$. Moreover, the  infinity norm of the vector $\bm x\in\mathbb{R}^n$ is given by $$\lVert \bm x \rVert_\infty:=\max\big\{\lvert x_i \rvert \,:\, i\in\{1,2,\ldots, n\} \big\}.$$
	Furthermore,
	$\mathbb{R}_+:=(0,+\infty)$, and $\mathbb{N}_0:=\mathbb{N}\cup\{0\}$.
	
	The classical directional derivative of a function $F:\mathbb{R}^n\to\mathbb{R}^m$ at a point $\bm x\in\mathbb{R}^n$ and direction $\bm d\in\mathbb{R}^n$ is given by \cite{Zowe-book}
	\begin{equation}
		F'(\bm x; \bm d):=\lim_{t\downarrow 0}\frac{F(\bm x+t\bm d)-F(\bm x)}{t}.
	\end{equation} 
	If $F'(\bm x, \bm d)$ exists for every  $\bm x, \bm d\in\mathbb{R}^n$, then $F$ is called a directionally differentiable function.

	For a locally Lipschitz function $F:\mathbb{R}^n\to\mathbb{R}^m$, let
	$$\Omega_F:=\left\{\bm x\in\mathbb{R}^n \,:\, F\,\, \text{is not differentiable at}\, \bm x \right\}.$$
	Then, by Rademacher's theorem \cite{Evans2015}, $\mathbb{R}^n\setminus\Omega_F$ is a full measure subset of $\mathbb{R}^n$.
	
	Suppose $f:\mathbb{R}^n\to\mathbb{R}$ is a locally Lipschitz function. The Clarke subdifferential  of $f$ at a given point $\bm x\in\mathbb{R}^n$ is given by \cite{Clarke:Functional_analysis}
	\begin{equation*}
		\partial f(\bm x):=\texttt{conv} \left\{\boldsymbol{\xi}\in\mathbb{R}^n \,:\, \exists \{\bm x_k\}\subset \mathbb{R}^n \setminus\Omega_f \,\,\, \text{s.t.} \,\,\, \bm x_k\to\bm x \,\,\text{and} \,\, \nabla f(\bm x_k)\to \boldsymbol{\xi}   \right\},
	\end{equation*}
	where \texttt{conv} denotes the convex hull of a set. Each element of the set $\partial f(\bm x)$ is called a subgradient.  For any $\varepsilon\geq0$, the Clarke $\varepsilon$-subdifferential of $f$ at a point $\bm x\in\mathbb{R}^n$ is defined as \cite{Makela_book}
	\begin{equation*}
		\partial_\varepsilon f(\bm x):=\texttt{conv}\left\{ \partial f(\bm y) \,\,:\,\, \bm y\in\mathcal{B}_\varepsilon(\bm x)   \right\}.
	\end{equation*}
	Notice that $\partial f(\bm x)=\partial_0 f(\bm x)$, for all $\bm x\in\mathbb{R}^n$. Furthermore, for any $\varepsilon\geq 0$ and $\bm x\in\mathbb{R}^n$, the set $\partial_\varepsilon f(\bm x)$
	is a nonempty, convex, and compact subset of $\mathbb{R}^n$. In addition, for any $\varepsilon\geq 0$, the set-valued map $\partial_\varepsilon f:\mathbb{R}^n\rightrightarrows\mathbb{R}^n$ is locally bounded and upper semicontinuous \cite{Makela_book}.
	
	For the locally Lipschitz vector-valued function $F:\mathbb{R}^n\to\mathbb{R}^m$, the generalized Jacobian at a given point $\bm x\in\mathbb{R}^n$ is defined as \cite{Zowe-book}
	$$\partial F(\bm x):=\texttt{conv}\left\{\bm V\in\mathbb{R}^{m\times n}\, : \, \exists \{\bm x_k\}\subset \mathbb{R}^n \setminus\Omega_F \,\,\, \text{s.t.} \,\,\, \bm x_k\to\bm x \,\,\text{and} \,\, \mathcal J F(\bm x_k)\to \bm V    \right\},$$
	in which, for any $\bm x\in\mathbb{R}^n \setminus\Omega_F$, $\mathcal J F(\bm x)\in\mathbb{R}^{m\times n}$ is the usual Jacobian matrix. Similar to the subdifferential set, for every $\bm x\in\mathbb{R}^n$, the generalized Jacobian $\partial F(\bm x)$ is a nonempty, convex, and compact subset of $\mathbb{R}^{m\times n}$. Moreover, the set-valued map $\partial F:\mathbb{R}^n\rightrightarrows\mathbb{R}^{n\times m}$ is locally bounded and upper semicontinuous \cite{Zowe-book}.
	
	The Clarke directional derivative of the locally Lipschitz function $f:\mathbb{R}^n\to\mathbb{R}$ at point  $\bm x\in\mathbb{R}^n$ and direction $\bm d\in\mathbb{R}^n$ is given by \cite{Clarke:Functional_analysis}
	\begin{equation}\label{Directional Derivative}
		f^\circ(\bm x; \bm d):=\limsup_{\substack{\bm y\to \bm x\\t\downarrow 0}} \frac{f(\bm y+ t\bm d)-f(\bm y)}{t}.
	\end{equation}
	Since locally Lipschitz functions are locally bounded, $f^\circ(\bm x; \bm d)$ exists, for all $\bm x\in\mathbb{R}^n$ and $\bm d\in\mathbb{R}^n$. It is recalled that $f^\circ(\bm x; \bm d)$ can be expressed as the support functional for the subdifferential set $\partial f(\bm x)$; in other words \cite{Clarke:Functional_analysis}
	\begin{equation}\label{support-functional}
		f^\circ(\bm x; \bm d)=\max \left\{\boldsymbol{\xi}^T \bm d \,:\, \boldsymbol{\xi}\in\partial f(\bm x)  \right\}.
	\end{equation}

	Next, we recall two fundamental classes of functions that play a central role in nonsmooth optimization \cite{Zowe-book,bagirov2020}.
	\begin{definition}\label{weakly-semismooth}
		A function $F:\mathbb{R}^n\to\mathbb{R}^m$ is said to be semismooth at $\bm z\in\mathbb{R}^n$ if it is locally Lipschitz at $\bm z$, and the limit 
		\begin{equation}
			\lim_{\substack{\bm V\in\partial F(\bm z+h \bm d')\\ \bm d'\to \bm d,\, h\downarrow 0    }} \bm V \bm d,
		\end{equation}
		exists, for all $\bm d\in\mathbb{R}^n$. Furthermore, $F$ is called weakly semismooth at $\bm z\in\mathbb{R}^n$ if it is locally Lipschitz at $\bm z$, and the limit 
		\begin{equation}
			\lim_{\substack{\bm V\in\partial F(\bm z+h \bm d)\\  h\downarrow 0    }} \bm V \bm d,
		\end{equation}
		exists, for all $\bm d\in\mathbb{R}^n$. Moreover,  $F$ is called (weakly) semismooth if it is (weakly) semismooth at any $\bm z\in\mathbb{R}^n$.
		
		%and for any $\bm d\in\mathbb{R}^n$ and sequences $\{t_i\}\subset\mathbb{R}_+$ and $\{\bm V_i\}\subset\mathbb{R}^{m\times n}$ satisfying $t_i\downarrow 0$ and $\bm V_i\in\partial F(\bm x+t_i \bm d)$, the sequence	 $\{\bm V_i \bm d\}$ has exactly one limit point. Moreover,  $F$ is called weakly semismooth if it is weakly semismooth at any $\bm x\in\mathbb{R}^n$.	
	\end{definition}
	Clearly, any semismooth function is weakly semismooth. Furthermore, a weakly semismooth function $F:\mathbb{R}^n\to\mathbb{R}^m$ is directionally differentiable, and  \cite{Zowe-book} 
	\begin{equation*}
		F'(\bm z; \bm d)=\lim_{\substack{\bm V\in\partial F(\bm z+h \bm d)\\  h\downarrow 0    }} \bm V \bm d.
	\end{equation*}

	%%%%%%%%%%%%%%%%%%%%%%%%%%%%%%%%%%%%%%%%%%%%%%%%%%%%%
	%A function $f:\mathbb{R}^n\to\mathbb{R}$ is called weakly upper semismooth if for any $\bm x, \bm d\in\mathbb{R}^n$ and sequences $\{h_i\}\subset\mathbb{R}_+$ and $\{\boldsymbol{\xi}_i \}\subset \mathbb{R}^n$ satisfying $t_i\downarrow 0$ as $i\to\infty$ and $\boldsymbol{\xi}_i\in\partial f(\bm x+ t_i \bm d)$ one has
	%\begin{equation}
	%\liminf_{i\to\infty} \frac{f(\bm x+t_i \bm d)-f(\bm x)}{t_i}\leq \limsup_{i\to\infty} \boldsymbol{\xi}_i^T \bm d.
	%\end{equation}
	%%%%%%%%%%%%%%%%%%%%%%%%%%%%%%%%%%%%%%%%%%%%%%%%%%%%%%
	
	%As proved in \cite{Ulbrich-semismooth}, a directionally differentiable function $F:\mathbb{R}^n\to\mathbb{R}^m$ is semismooth at $\bm x\in\mathbb{R}^n$ if for the sequences $\{\bm h_i\}\subset\mathbb{R}^n$ and $\{\bm V_i\}\subset\mathbb{R}^{m\times n}$ satisfying $\bm h_i\to \bm 0$ and $\bm V_i\in\partial F(\bm x +\bm h_i)$, one has 
	%\begin{equation}\label{Suff-cond-semi}
	%\lVert F(\bm x+\bm h_i)-F(\bm x)-\bm V_i \bm h_i\rVert=o(\lVert \bm h_i\rVert),
	%\end{equation}
	%where $o(\lVert \bm h_i\rVert)$ is a sequence satisfying  $o(\lVert \bm h_i\rVert)/\lVert \bm h_i\lVert\to \bm 0$, as $i\to\infty$.
	
	\section{ Projection onto Polyhedral Sets}\label{P-Projection}
	Let $C\subseteq\mathbb{R}^n$ be a nonempty, closed, and convex subset of $\mathbb{R}^n$. The orthogonal projection mapping $P_C:\mathbb{R}^n\to\mathbb{R}^n$ is defined by
	\begin{equation}\label{Projection_Problem}
		P_C(\bm x):=\text{argmin} \, \left\{\lVert \bm y-\bm x\lVert \,\,:\,\, \bm y\in C    \right\}.
	\end{equation}
	Due to the closedness and convexity of the nonempty set $C$, for any $\bm x\in\mathbb{R}^n$, $P_C(\bm x)$ exists and is uniquely determined. Moreover, $P_C$ is  nonexpansive \cite{Amir_Beck_Nonlinear}, i.e.,
	\begin{equation}
		\lVert P_C(\bm x)-P_C(\bm y)\rVert \leq \lVert \bm x-\bm y \rVert, \quad \forall \, \bm x, \bm y\in\mathbb{R}^n,
	\end{equation} 
	and hence it is a continuous function over $\mathbb{R}^n$. For a given $\bm x\in\mathbb{R}^n$, it is essentially well-known that $\bm y=P_C(\bm x)$ if and only if \cite{Amir_Beck_Nonlinear}
	\begin{equation}\label{First-Pro-Theorem}
		(\bm z-\bm y)^T(\bm x-\bm y)\leq 0, \quad \forall \, \bm z\in C.
	\end{equation}
	%As another important feature of the orthogonal projection map, let $\bm x, \bm d\in\mathbb{R}^n$, and $M_1\geq M_2>0$. Then \cite{Amir_Beck_Nonlinear}
	%\begin{equation}\label{projection-feature2}
	%\frac{\left \lVert M_2 \left[ \bm x -P_C\left(\bm x -\frac{1}{M_2} \bm d\right)   \right]   \right\rVert}{M_2} \geq \frac{\left \lVert M_1 \left[ \bm x -P_C(\bm x -\frac{1}{M_1} \bm d)   \right]   \right\rVert}{M_1}.
	%\end{equation}
	
	In what follows, we consider $C$ as a nonempty, closed, and convex \emph{polyhedral} set, i.e., 
	\begin{equation}\label{Polyhedral_Set}
		C:=\{ \bm y\in\mathbb{R}^n \,\,:\,\,\bm  A\bm y\leq \bm b  \}\neq \emptyset,
	\end{equation}
	in which $\bm A\in\mathbb{R}^{m\times n}$ and $\bm b\in\mathbb{R}^m$. In this situation, it is proved in \cite{Zowe-book} that  $P_C$ is a directionally differentiable function. In addition to this, the following lemma reveals that $P_C$ is a piecewise linear function, which is a consequence of Karush-Kuhn-Tucker (KKT) optimality conditions. Similar versions of this result can be found in \cite{Projection-Poly, Amir_Beck_Nonlinear}. However, the proof presented here includes some technical details that are essential for the development of the subsequent results.

	%in what follows, we shall prove that $P_C$ is indeed a semismooth map. To this end, we need the following auxiliary result.
	
	\begin{lemma}\label{L1}
		Suppose that $C\subseteq\mathbb{R}^n$ is a nonempty polyhedral set given by \eqref{Polyhedral_Set}. Then, there exist $N\in\mathbb{N}$, and  subsets $R_j$ of $\mathbb{R}^n$, $j=1,2,\ldots, N$, such that $\mathbb{R}^n=\cup_{j=1}^{N} R_j$ and
		\begin{equation}
			P_C(\bm x)=\bm M_j \bm x + \bm c_j, \quad \forall\,  \bm x\in R_j,
		\end{equation}	
		in which,  $\bm M_j\in\mathbb{R}^{n\times n}$ and $\bm c_j\in\mathbb{R}^n$, for $j=1,\ldots,N$.
	\end{lemma}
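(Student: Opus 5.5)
We prove the lemma by writing the KKT conditions of the projection problem \eqref{Projection_Problem} and indexing the regions by \emph{active sets}. Since $C$ is polyhedral, the constraints are linear and no constraint qualification is needed. The point $\bm y=P_C(\bm x)$, which minimizes $\tfrac12\lVert \bm y-\bm x\rVert^2$ over $C$, is therefore characterized by the existence of $\boldsymbol{\lambda}\in\mathbb{R}^m$ with
\begin{equation*}
\bm y-\bm x+\bm A^T\boldsymbol{\lambda}=\bm 0,\quad \bm A\bm y\leq \bm b,\quad \boldsymbol{\lambda}\geq \bm 0,\quad \lambda_i(\bm a_i^T\bm y-b_i)=0,\ i=1,\ldots,m,
\end{equation*}
where $\bm a_i^T$ is the $i$th row of $\bm A$.

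The multiplier need not be unique, so we choose it with a linearly independent support. By a Carathéodory-type argument, among all KKT multipliers for the given $\bm y$ there is one whose support $J$ has $\{\bm a_i\}_{i\in J}$ linearly independent. Indeed, $\bm x-\bm y$ lies in the cone generated by the active rows, and any conic combination can be reduced to one using linearly independent generators. This choice is the main technical point: without it, the system determining $\boldsymbol{\lambda}$ below need not have an invertible matrix.

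For each subset $J\subseteq\{1,\ldots,m\}$ with $\bm A_J$ (the rows indexed by $J$) of full row rank, define $R_J$ as the set of $\bm x$ admitting a KKT pair $(\bm y,\boldsymbol{\lambda})$ with $\lambda_i=0$ for $i\notin J$ and $\bm A_J\bm y=\bm b_J$. On $R_J$ we have $\bm y=\bm x-\bm A_J^T\boldsymbol{\lambda}_J$. Substituting into $\bm A_J\bm y=\bm b_J$ gives
\begin{equation*}
\boldsymbol{\lambda}_J=(\bm A_J\bm A_J^T)^{-1}(\bm A_J\bm x-\bm b_J),
\end{equation*}
which is well defined because $\bm A_J$ has full row rank. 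Hence
\begin{equation*}
P_C(\bm x)=\big(\bm I-\bm A_J^T(\bm A_J\bm A_J^T)^{-1}\bm A_J\big)\bm x+\bm A_J^T(\bm A_J\bm A_J^T)^{-1}\bm b_J,
\end{equation*}
which has the form $\bm M_J\bm x+\bm c_J$. For $J=\emptyset$ we take $\bm M_J=\bm I$ and $\bm c_J=\bm 0$; this region is exactly $C$. The formula holds on $R_J$ because the KKT point is unique (strict convexity of the objective) and equals $P_C(\bm x)$.

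Finally, every $\bm x\in\mathbb{R}^n$ lies in some $R_J$, by existence of a KKT pair together with the reduction to a linearly independent support. There are at most $2^m$ such index sets, so relabeling them as $j=1,\ldots,N$ gives $\mathbb{R}^n=\cup_{j=1}^N R_j$ with $N\leq 2^m$. Each $R_J$ can also be written explicitly as $\{\bm x:\ \boldsymbol{\lambda}_J(\bm x)\geq\bm 0,\ \bm A(\bm M_J\bm x+\bm c_J)\leq\bm b\}$, a polyhedron, which is useful for the later results.
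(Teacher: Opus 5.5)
Your proof is correct. It follows the same KKT route as the paper and arrives at the same affine formula; the difference lies in how the pieces are chosen.

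The paper labels each $\bm x$ by its unique optimal active set $\mathrm{I}$, so its $R_j$ partition $\mathbb{R}^n$. It handles rank deficiency with a bare ``without loss of generality, $\bm A_{\mathrm{I}}$ has full row rank.'' That step can be justified by linear algebra alone. Replace $\mathrm{I}$ by a subset $\mathrm{I}'$ whose rows form a basis of the row space of $\bm A_{\mathrm{I}}$. The derivation only uses $\bm x-\bm y\in\mathrm{range}(\bm A_{\mathrm{I}}^T)=\mathrm{range}(\bm A_{\mathrm{I}'}^T)$ and $\bm A_{\mathrm{I}'}\bm y=\bm b_{\mathrm{I}'}$. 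Hence the sign of the reduced multiplier is irrelevant, and no Carath\'eodory argument is needed.

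You instead index pieces by linearly independent supports of \emph{nonnegative} multipliers. The sign-preserving conic Carath\'eodory reduction is essential for you precisely because your $R_J$ are defined through KKT pairs. In return, each $R_J$ is a closed polyhedron. The pieces need not be disjoint, but the lemma does not require that. Two consequences follow:
\begin{itemize}
\item Corollary~\ref{Corollary1} becomes immediate: part (i) because $\texttt{cl}\,R_J=R_J$, and part (ii) from your explicit inequality description.
\item Lemma~\ref{L2'} still holds, since its proof only uses that finitely many convex pieces cover the ray $\{\bm z+h\bm d\}$.
\end{itemize}
The paper's choice buys a genuine partition with the natural active-set interpretation. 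The cost is generally non-closed pieces and a separate convexity argument.
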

	\begin{proof}
		Since the objective function of problem~\eqref{Projection_Problem} is convex and $C$ is a polyhedral set, KKT  conditions are indeed necessary and sufficient optimality conditions. For a given $\bm x\in\mathbb{R}^n$, suppose 
		$$\bm y=P_C(\bm x)=\text{argmin} \, \left\{\frac{1}{2}\lVert \bm y-\bm x\lVert^2 \,\,:\,\, \bm y\in C    \right\}.$$
		Then, KKT conditions imply the existence of nonnegative vector of Lagrange multipliers $\boldsymbol{\lambda}:=(\lambda_1,\ldots,\lambda_m)^T$ such that
		\begin{align}\label{KKT-Cond}
			\bm y-\bm x+\bm A^T \boldsymbol{\lambda} =\bm 0 \quad \text{and} \quad \lambda_i(\bm a^i\bm y-b_i)=0, \,\, \forall\, i\in\{1,\ldots,m\},
		\end{align}
		in which, $\bm a^i$ and $b_i$ denote the $i$-th row of $\bm A\in\mathbb{R}^{m\times n}$ and $i$-th component of $\bm b\in\mathbb{R}^m$, respectively.	Define the optimal active set $\mathrm{I}:=\left\{i\in\{1,\ldots,m\} \,:\, \bm a^i \bm y=b_i   \right\}$.
		
		First,  assume $\mathrm{I}\neq \emptyset$. Then, $\lambda_i=0$ for any $i\notin\mathrm{I}$, and
		\begin{equation}\label{KKT-Active}
			\bm y-\bm x+\bm A^T_{\mathrm{I}} \boldsymbol{\lambda}_{\mathrm{I}} =\bm 0 \quad \text{and} \quad \bm A_{\mathrm{I}}\bm y=\bm b_{\mathrm{I}},
		\end{equation}
		in which, $\bm A_{\mathrm{I}}:=[\bm a^i]_{i\in\mathrm{I}}$,  $\bm b_{\mathrm{I}}:=(b_i)_{i\in\mathrm{I}}$, and $\boldsymbol{\lambda}_{\mathrm{I}}:=(\lambda_i)_{i\in\mathrm{I}}$. With no loss of generality, one may assume that $\bm A_{\mathrm{I}}$ is a full row rank matrix. Next, 
		it follows from~\eqref{KKT-Active} that
		\begin{equation*}
			\bm y=\bm x- \bm A_{\mathrm{I}}^T \left(\bm A_{\mathrm{I}} \bm A_{\mathrm{I}}^T \right)^{-1} \left(\bm A_{\mathrm{I}} \bm x-\bm b_{\mathrm{I}}   \right),
		\end{equation*} 
		and hence, one can write $\bm y=\bm M \bm x + \bm c$ such that
		\begin{equation*}\label{M-c-1'}
			\bm M:= \bm I_n- \bm A_{\mathrm{I}}^T \left(\bm A_{\mathrm{I}} \bm A_{\mathrm{I}}^T \right)^{-1} \bm A_{\mathrm{I}}\in\mathbb{R}^{n\times n} \quad \text{and} \quad \bm c:=\bm A_{\mathrm{I}}^T \left(\bm A_{\mathrm{I}} \bm A_{\mathrm{I}}^T \right)^{-1} \bm b_{\mathrm{I}}\in\mathbb{R}^n,
		\end{equation*}
		where, $\bm I_n\in\mathbb{R}^{n\times n}$ is the identity matrix.
		
		In case $\mathrm{I}=\emptyset$, it immediately follows from \eqref{KKT-Cond} that $\bm y=\bm x$, and it is sufficient to set 
		$\bm M:= \bm I_n\in\mathbb{R}^{n\times n} $ and $\bm c:=\bm 0 \in\mathbb{R}^n$ to see that $\bm y=\bm M\bm x+\bm c$.
		
		Therefore, by setting $$R:=\{\bm x\in\mathbb{R}^n :  \text{the optimal active set corresponding to}\, \bm x \,\, \text{is}\,\, \mathrm{I}  \}\subseteq\mathbb{R}^n,$$
		we conclude the existence of $\bm M\in\mathbb{R}^{n\times n}$ and $\bm c\in\mathbb{R}^n$ such that $\bm y=\bm M \bm x + \bm c$, for all $\bm x\in R$.
		
		Next, let $\mu:=\{1,2,\ldots,m\}$ and $\mathcal{P}(\mu):=\{\mathrm{I}^1, \mathrm{I}^2,\ldots,\mathrm{I}^N  \}$ denote the power set of the set  $\mu$. For any $j\in\{1,\ldots N\}$, define
		\begin{equation*}
			R_j:=\{\bm x\in\mathbb{R}^n :  \text{the optimal active set corresponding to}\, \bm x \,\, \text{is}\,\, \mathrm{I}^j  \}.
		\end{equation*}
		Then, it is evident that $\mathbb{R}^n=\cup_{j=1}^{N} R_j$. Moreover, for any $j\in\{1,\ldots N\}$,  the above arguments ensure the existence of  $\bm M_j\in\mathbb{R}^{n\times n}$ and $\bm c_j\in\mathbb{R}^n$ such that
		\begin{equation*}
			\bm y=P_C(\bm x)=\bm M_j \bm x + \bm c_j, \quad \forall \, \bm x\in R_j.
		\end{equation*}
	\end{proof}

	\begin{corollary}\label{Corollary1}
		Suppose that $C\subseteq\mathbb{R}^n$ is a nonempty polyhedral set given by \eqref{Polyhedral_Set}. For any  $j\in\{1,2,\ldots,N\}$,	let $R_j\subseteq\mathbb{R}^n, \bm M_j\in\mathbb{R}^{n\times n}$, and $\bm c_j\in\mathbb{R}^n$ be as obtained in Lemma~\ref{L1}. Then,
		\begin{itemize}
			\item [(i)] For any $\bm x\in\texttt{cl}\, R_j$, we have $	P_C(\bm x)= \bm M_j\bm x + \bm c_j$, where \texttt{cl} denotes the closure of a set.
			\item[(ii)] For each $j\in\{1,2,\ldots,N\}$, $R_j$ is a convex subset of $\mathbb{R}^n$.
		\end{itemize}

	\end{corollary}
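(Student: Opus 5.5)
Both parts follow from continuity of $P_C$ together with the explicit description of $R_j$ through the active set $\mathrm{I}^j$ used in the proof of Lemma~\ref{L1}.

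\emph{Part (i).} Fix $j$ and $\bm x\in\texttt{cl}\,R_j$, and choose $\{\bm x_k\}\subset R_j$ with $\bm x_k\to\bm x$. By Lemma~\ref{L1}, $P_C(\bm x_k)=\bm M_j\bm x_k+\bm c_j$ for every $k$. The map $P_C$ is nonexpansive, hence continuous, and the affine map $\bm z\mapsto\bm M_j\bm z+\bm c_j$ is continuous as well. Letting $k\to\infty$ on both sides gives $P_C(\bm x)=\bm M_j\bm x+\bm c_j$. There is a degenerate case: if $R_j=\emptyset$, then $\texttt{cl}\,R_j=\emptyset$ and the claim holds vacuously. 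So (i) is a routine limit argument.

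\emph{Part (ii).} Here the key step is a characterization of $R_j$. Let $\mathrm{I}:=\mathrm{I}^j$. I would show that $\bm x\in R_j$ if and only if the point $\bm y:=\bm M_j\bm x+\bm c_j$ satisfies three conditions:
\begin{itemize}
\item[(a)] $\bm A_{\mathrm{I}}\bm y=\bm b_{\mathrm{I}}$;
\item[(b)] $\bm a^i\bm y<b_i$ for every $i\notin\mathrm{I}$;
\item[(c)] there exists $\boldsymbol{\lambda}_{\mathrm{I}}\geq\bm 0$ with $\bm x-\bm y=\bm A_{\mathrm{I}}^T\boldsymbol{\lambda}_{\mathrm{I}}$.
\end{itemize}
Necessity is exactly the KKT system \eqref{KKT-Cond}--\eqref{KKT-Active} combined with the formula for $\bm y$ from the proof of Lemma~\ref{L1}. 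Sufficiency holds because, for this convex problem over a polyhedron, KKT conditions are sufficient, so $\bm y=P_C(\bm x)$, and (a)--(b) say precisely that the active set at $\bm y$ is $\mathrm{I}$.

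With this description, convexity follows. Take $\bm x^1,\bm x^2\in R_j$ and $t\in[0,1]$, and set $\bm x^t:=t\bm x^1+(1-t)\bm x^2$. Since $\bm x\mapsto\bm M_j\bm x+\bm c_j$ is affine, $\bm y^t=t\bm y^1+(1-t)\bm y^2$. Then:
\begin{itemize}
\item (a) is an affine equality, so it is preserved under convex combinations;
\item (b) is a strict affine inequality, and convex combinations of points satisfying a strict affine inequality still satisfy it;
\item (c) is preserved by taking $\boldsymbol{\lambda}^t=t\boldsymbol{\lambda}^1+(1-t)\boldsymbol{\lambda}^2\geq\bm 0$, because $\bm x^t-\bm y^t$ is the same convex combination of $\bm x^1-\bm y^1$ and $\bm x^2-\bm y^2$.
\end{itemize}
Hence $\bm x^t\in R_j$. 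Equivalently, $R_j$ is the image of a convex set of pairs $(\bm y,\boldsymbol{\lambda})$ under the affine map $(\bm y,\boldsymbol{\lambda})\mapsto\bm y+\bm A_{\mathrm{I}}^T\boldsymbol{\lambda}$, which gives convexity directly.

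\emph{Main obstacle.} The delicate point is the reduction to a full-row-rank $\bm A_{\mathrm{I}}$ made in Lemma~\ref{L1}. If that reduction replaces $\mathrm{I}$ by a subset, then $\bm M_j,\bm c_j$ depend only on the row space of $\bm A_{\mathrm{I}}$. I would avoid the issue entirely by using the parametrization just described, in terms of $\bm y$ and $\boldsymbol{\lambda}_{\mathrm{I}}$ with the full active set $\mathrm{I}$, together with the fact that $\bm y=P_C(\bm x)$ is uniquely determined. Then no rank assumption is needed: $\bm y$ is convex-combined linearly because $P_C$ is affine on each $R_j$, and the multipliers need not be unique.
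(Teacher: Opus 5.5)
Your proof is correct and follows essentially the same route as the paper. Part (i) is the same limit argument using continuity of $P_C$. For part (ii), both you and the paper convex-combine the KKT systems of two points with the same active set $\mathrm{I}$, then use KKT sufficiency to conclude $\bm x^t\in R_j$. The only cosmetic differences are that you obtain $\bm y^t$ through the affine formula $\bm M_j\bm x+\bm c_j$ (the paper convex-combines $P_C(\bm x_1)$ and $P_C(\bm x_2)$ directly), and that your argument covers $\mathrm{I}=\emptyset$ without the paper's separate case.
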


	\begin{proof}
		(i)	This is an immediate consequence of Lemma~\ref{L1} and  continuity of the map $P_C:\mathbb{R}^n\to\mathbb{R}^n$.
		
		(ii) Assume $ j\in\{1,2,\ldots,N\}$ is arbitrary and $\mu:=\{1,2,\ldots,m\}$. For the sake of simplicity in notations, let $R:=R_j$ and  $\mathrm{I}:=\mathrm{I}^j$. First, it is assumed that $\mathrm{I}\neq\emptyset$.  Suppose $\bm x_1, \bm x_2\in R$ are arbitrary, and $t\in[0, 1]$ is given. In addition, suppose $\bm y_1:=P_C(\bm x_1)$, $\bm y_2:=P_C(\bm x_2)$, and $\bm x_t:=t \bm x_1 + (1-t)\bm x_2$. We need to show that $\bm x_t\in R$. Since $\bm x_1, \bm x_2\in R$, we have
		\begin{equation}\label{Co-1-1}
			\bm A_{\mathrm{I}} \bm y_1 = \bm b_{\mathrm{I}} \quad \text{and} \quad \bm A_{\mu\setminus\mathrm{I}} \bm y_1 < \bm b_{\mu\setminus\mathrm{I}},
		\end{equation}
		and
		\begin{equation}\label{Co-1-2}
			\bm A_{\mathrm{I}} \bm y_2 = \bm b_{\mathrm{I}} \quad \text{and} \quad \bm A_{\mu\setminus\mathrm{I}} \bm y_2< \bm b_{\mu\setminus\mathrm{I}}.
		\end{equation}
		As $\bm y_1=P_C(\bm x_1)$ and $\bm y_2=P_C(\bm x_2)$ with $\mathrm{I}$ as the optimal active set,  KKT optimality conditions ensure the existence of nonnegative vectors of Lagrange multipliers $\boldsymbol{\lambda}^1, \boldsymbol{\lambda}^2\in\mathbb{R}^m$ such that 
		\begin{equation}\label{Co-1-3}
			\bm y_1 - \bm x_1 + \bm A^T_{\mathrm{I}} \boldsymbol{\lambda}^1_{\mathrm{I}} =\bm 0 \quad \text{and} \quad \bm y_2 - \bm x_2  +\bm A^T_{\mathrm{I}} \boldsymbol{\lambda}^2_{\mathrm{I}} =\bm 0. 
		\end{equation}
		Let $\bm y_t:= t \bm y_1 + (1-t) \bm y_2$. Then, by multiplying the first and second equations in \eqref{Co-1-3} by $t$ and $1-t$, respectively, and then summing them up, we obtain
		\begin{equation}\label{Co-1-4}
			\bm y_t - \bm x_t + \bm A^T_{\mathrm{I}} \big(t \boldsymbol{\lambda}^1_{\mathrm{I}} + (1-t)\boldsymbol{\lambda}^2_{\mathrm{I}}  \big) = \bm 0.
		\end{equation}
		Clearly, $t \boldsymbol{\lambda}^1_{\mathrm{I}} + (1-t)\boldsymbol{\lambda}^2_{\mathrm{I}}\geq \bm 0 $. Moreover, in view of \eqref{Co-1-1} and \eqref{Co-1-2}, one can write
		\begin{equation}\label{Co-1-5}
			\bm A_{\mathrm{I}} \bm y_t=\bm b_{\mathrm{I}} \quad \text{and} \quad \bm A_{\mu\setminus\mathrm{I}} \bm y_t<\bm b_{\mu\setminus\mathrm{I}}.
		\end{equation}
		Next, it follows from \eqref{Co-1-4} and \eqref{Co-1-5} that $\bm y_t=P_C(\bm x_t)$ with $\mathrm{I}$ as the optimal active set. Consequently, $\bm x_t\in R$, which means $R$ is convex.
		
		In case $\mathrm{I}=\emptyset$, it is easy to see that $R=\left\{\bm x\in\mathbb{R}^n\, : \, \bm A \bm x < \bm b   \right\}$, which is clearly a convex set.
	\end{proof}

	It is shown in \cite{Ulbrich-semismooth} that every piecewise smooth function is semismooth. Thus, as a consequence of Lemma~\ref{L1},
	the projection mapping
	$P_C:\mathbb{R}^n\to\mathbb{R}^n$ is semismooth whenever the set $C\subseteq\mathbb{R}^n$ is given by \eqref{Polyhedral_Set}. Moreover,
	Mifflin establishes that the class of semismooth functions is closed under composition \cite{Mifflin}. In addition, it is proved in \cite{Zowe-book} that the composition of a continuously differentiable function with a weakly semismooth function remains weakly semismooth.
	In the following, for a polyhedral set $C\subseteq\mathbb{R}^n$, we show that the composition of a weakly semismooth function with the projection mapping is again weakly semismooth. To this end, we first establish the following auxiliary result.

	\begin{lemma}\label{L2'}
		Suppose $\bm z, \bm d\in\mathbb{R}^n$, and the sequence $h_k\downarrow 0$ is given.	Let $R_j\subseteq\mathbb{R}^n, j=1,2,\ldots,N$, be as derived in Lemma~\ref{L1}. Then, there exist $\bar{k}\in\mathbb{N}$ and $\bar{j}\in\{1,2,\ldots,N\}$ such that
		\begin{equation}\label{L1'-1}
			\bm z+ h_k \bm d \in R_{\bar j}, \quad \forall \, k\geq \bar{k}.
		\end{equation}
		
	\end{lemma}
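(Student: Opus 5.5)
The plan is to reduce the statement to a one-dimensional covering argument along the ray $\{\bm z+h\bm d : h>0\}$. The two ingredients are the finiteness of the partition from Lemma~\ref{L1} and the convexity of each piece from Corollary~\ref{Corollary1}(ii). Note that simply passing to a subsequence lying in a single $R_j$ (pigeonhole) is not enough, since the statement requires \emph{all} $k\geq\bar k$. Convexity is exactly what upgrades ``infinitely often'' to ``eventually''.

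Concretely, for each $j\in\{1,\ldots,N\}$ I will define
$$T_j:=\{h\in(0,1) \,:\, \bm z+h\bm d\in R_j\}.$$
Since the map $h\mapsto \bm z+h\bm d$ is affine and $R_j$ is convex by Corollary~\ref{Corollary1}(ii), each $T_j$ is the intersection of $(0,1)$ with the preimage of a convex set under an affine map. Hence each $T_j$ is a convex subset of $\mathbb{R}$, that is, an interval, possibly empty. Moreover, $\mathbb{R}^n=\cup_{j=1}^N R_j$ gives $(0,1)=\cup_{j=1}^N T_j$.

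Next I will show that some $T_{\bar j}$ contains an interval of the form $(0,s]$ with $s>0$. Let $\tau:=\min\{\inf T_j : T_j\neq\emptyset\}$; this is a minimum over finitely many numbers in $[0,1)$. If $\tau>0$, then any $h\in(0,\tau)$ would lie in no $T_j$, contradicting the covering. Thus $\tau=0$, and there is a nonempty $T_{\bar j}$ with $\inf T_{\bar j}=0$. Pick any $s\in T_{\bar j}$, so $s>0$. Since $T_{\bar j}$ is an interval whose infimum is $0$ and which contains $s$, convexity gives $(0,s]\subseteq T_{\bar j}$. This is the step where convexity is essential and the only genuinely nontrivial point. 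Its two subtleties are that $0\notin T_{\bar j}$ may hold, and that the intervals need not be disjoint or open; the argument uses neither property.

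Finally, since $h_k\downarrow 0$, there is $\bar k$ with $0<h_k\le s$ for all $k\ge\bar k$. Then $h_k\in T_{\bar j}$, i.e., $\bm z+h_k\bm d\in R_{\bar j}$ for all $k\geq\bar k$, which is \eqref{L1'-1}. If $h_k>0$ is not assumed strictly, the case $h_k=0$ is excluded by $h_k\downarrow0$ in the usual sense; alternatively, one can restrict to the tail where $h_k\in(0,1)$.
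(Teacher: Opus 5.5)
Your proof is correct, but it takes a different route from the paper's.

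The paper argues by contradiction directly on the given sequence. If the points $\bm z+h_k\bm d$ are not eventually in one piece, then, by an implicit pigeonhole step, some $R_{\hat j}$ is entered and left infinitely often. This gives indices $k_1<k_2<k_3$ with $\bm z+h_{k_1}\bm d,\ \bm z+h_{k_3}\bm d\in R_{\hat j}$ and $\bm z+h_{k_2}\bm d\notin R_{\hat j}$. Since $h_{k_1}\ge h_{k_2}\ge h_{k_3}$, the middle point lies on the segment joining the other two, which contradicts Corollary~\ref{Corollary1}(ii).

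You instead pass to the continuous parameter. You note that each trace $T_j$ of $R_j$ on the ray is an interval, and use the finite cover of $(0,1)$ to find one piece containing a whole initial segment $\{\bm z+h\bm d: 0<h\le s\}$.

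Both arguments rest on the same two ingredients: finiteness of the partition from Lemma~\ref{L1} and convexity of each $R_j$. The paper's version is shorter and only ever compares three sample points. Yours buys a few things:
\begin{itemize}
\item It is direct rather than by contradiction.
\item It covers $\bm d=\bm 0$ without a separate case.
\item It proves a stronger, sequence-independent statement: $\bar j$ and $s$ depend only on $\bm z$ and $\bm d$, so $P_C$ agrees with the single affine map $\bm x\mapsto\bm M_{\bar j}\bm x+\bm c_{\bar j}$ on an initial segment of the ray.
\end{itemize}
Combined with Corollary~\ref{Corollary1}(i) at $\bm z$, which lies in $\texttt{cl}\, R_{\bar j}$, this gives $P_C'(\bm z;\bm d)=\bm M_{\bar j}\bm d$ in Theorem~\ref{T0} directly. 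You then need neither the externally cited directional differentiability of $P_C$ nor a particular choice of $h_k$.
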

	\begin{proof}
		In case $\bm d=\bm 0$, the assertion follows immediately from the fact that $\mathbb{R}^n=\cup_{j=1}^{N} R_j$. Let $\bm d\neq \bm 0$, and
		suppose by indirect proof that the assertion does not hold. Then, for any $\bar k\in\mathbb{N}$, there exist  $k_3>k_2>k_1\geq \bar k$ along with $\hat{j}\in \{1,2,\ldots,N\}$ such that 
		\begin{equation*}
			\bm z+ h_{k_1} \bm d\in R_{\hat{j}}, \quad \bm z+ h_{k_2} \bm d\notin R_{\hat{j}}, \,\, \text{and} \,\,\, \bm z+ h_{k_3} \bm d\in R_{\hat{j}}. 
		\end{equation*}
		Since $k_3>k_2>k_1$, this means $R_{\hat{j}}$ is not a convex set, a contradiction. 
	\end{proof}
	
	\begin{theorem}\label{T0}
		Assume $C\subseteq\mathbb{R}^n$ is a nonempty polyhedral set given by \eqref{Polyhedral_Set}, and $f:\mathbb{R}^n\to\mathbb{R}$ is weakly semismooth. Then, $f\circ P_C:\mathbb{R}^n\to\mathbb{R}$ is weakly semismooth.
	\end{theorem}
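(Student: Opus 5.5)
The plan is to verify the two requirements of Definition~\ref{weakly-semismooth} for $F:=f\circ P_C$. Local Lipschitz continuity is immediate, since $P_C$ is nonexpansive and $f$ is locally Lipschitz. The real work is to show that, for fixed $\bm z,\bm d\in\mathbb{R}^n$, the limit of $\boldsymbol{\zeta}^T\bm d$ over $\boldsymbol{\zeta}\in\partial F(\bm z+h\bm d)$, $h\downarrow0$, exists. The idea is to show that along the ray $\bm z+h\bm d$, for small $h>0$, every element of the relevant generalized Jacobian of $P_C$ maps $\bm d$ to one fixed vector $\bm w$. This reduces the question to weak semismoothness of $f$ at $\bm y:=P_C(\bm z)$ in the direction $\bm w$.

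\textbf{Step 1 (localising the ray).} Each $R_j$ is convex by Corollary~\ref{Corollary1}(ii), so $\{h>0:\bm z+h\bm d\in R_j\}$ is an interval, and so is the corresponding set for each $\texttt{cl}\,R_j$. There are finitely many of these intervals and they cover $(0,\infty)$. Hence there exist $\delta>0$ and an index $\bar j$ with $\bm z+h\bm d\in R_{\bar j}$ for all $h\in(0,\delta)$; this is the continuous analogue of Lemma~\ref{L2'}. Shrinking $\delta$ further, I arrange that the set $J:=\{i:\texttt{cl}\,R_i\ \text{meets}\ \{\bm z+h\bm d:h\in(0,\delta)\}\}$ has the property that each $\texttt{cl}\,R_i$, $i\in J$, contains the whole open segment. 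This is possible because each such intersection is an interval in $h$, and there are finitely many of them.

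By Corollary~\ref{Corollary1}(i), for $i\in J$ and $h\in(0,\delta)$,
\begin{equation*}
\bm M_i(\bm z+h\bm d)+\bm c_i=P_C(\bm z+h\bm d)=\bm M_{\bar j}(\bm z+h\bm d)+\bm c_{\bar j}.
\end{equation*}
Differentiating in $h$ gives $\bm M_i\bm d=\bm M_{\bar j}\bm d=:\bm w$ for every $i\in J$. Letting $h\downarrow0$ and using continuity gives $P_C(\bm z+h\bm d)=\bm y+h\bm w$ on $(0,\delta)$.

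\textbf{Step 2 (Jacobians and chain rule).} Fix $\bm x=\bm z+h\bm d$ with $h\in(0,\delta)$. Any $\bm V\in\partial P_C(\bm x)$ is a convex combination of limits of Jacobians $\mathcal J P_C(\bm x_k)$ with $\bm x_k\to\bm x$. Since the generalized Jacobian is unchanged when a null set is excluded, I may take the $\bm x_k$ in interiors of pieces $R_i$, where $\mathcal JP_C=\bm M_i$. Such pieces satisfy $\bm x\in\texttt{cl}\,R_i$, hence $i\in J$, and therefore $\bm V\bm d=\bm w$.

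The Clarke chain rule gives
\begin{equation*}
\partial F(\bm x)\subseteq\texttt{conv}\{\bm V^T\boldsymbol{\xi}:\bm V\in\partial P_C(\bm x),\ \boldsymbol{\xi}\in\partial f(P_C(\bm x))\}.
\end{equation*}
Consequently every $\boldsymbol\zeta\in\partial F(\bm z+h\bm d)$ satisfies $\boldsymbol\zeta^T\bm d=\sum_\ell\lambda_\ell\boldsymbol{\xi}_\ell^T\bm w$, where the $\lambda_\ell\ge0$ sum to one and each $\boldsymbol\xi_\ell\in\partial f(\bm y+h\bm w)$.

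\textbf{Step 3 (passing to the limit).} Since $f$ is weakly semismooth at $\bm y$, $\boldsymbol\xi^T\bm w\to f'(\bm y;\bm w)$ uniformly over $\boldsymbol\xi\in\partial f(\bm y+h\bm w)$ as $h\downarrow0$. Convex combinations inherit this convergence, so $\boldsymbol\zeta^T\bm d\to f'(\bm y;\bm w)$. In the case $\bm w=\bm 0$ the limit is simply $0$, by local boundedness of $\partial f$.

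I expect the main obstacle to be Step 2: controlling the full generalized Jacobian of $P_C$ at points on the ray, not just the affine piece $R_{\bar j}$ containing it. This is exactly why I shrink $\delta$ so that every piece whose closure touches the segment contains all of it. A delicate point is the tacit use of pieces with nonempty interior. If that becomes troublesome, I would instead argue directly that at any differentiability point $\bm x_k\in R_i$ the Jacobian agrees with $\bm M_i$ on the affine hull of $R_i$, and restrict attention to full-dimensional pieces.
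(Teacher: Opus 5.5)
Your proof is correct. It shares the paper's skeleton: the Clarke chain rule, localizing the ray in one piece $R_{\bar j}$, and reducing to weak semismoothness of $f$ along $P_C(\bm z)+h\bm M_{\bar j}\bm d$. Where it differs is in how it controls the factor $\partial P_C(\bm z+h\bm d)$. The paper works along a sequence $h_k\downarrow 0$ (Lemma~\ref{L2'}) and selects elements of $E(\bm z_k)$ attaining the infimum and supremum of $\boldsymbol{\xi}^T\bm d$. It then appeals to the cited semismoothness of the piecewise affine map $P_C$, which gives only the asymptotic statement $\bm G_k\bm d\to P_C'(\bm z;\bm d)=\bm M_{\bar j}\bm d$, and closes with a sandwich argument. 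You instead obtain, via the extra shrinking of $\delta$, the exact identity $\bm V\bm d=\bm M_{\bar j}\bm d$ for all $\bm V\in\partial P_C(\bm z+h\bm d)$ and $h\in(0,\delta)$. This makes your argument independent of the external semismoothness result and removes the extremal-element and subsequence bookkeeping. The cost is that you must describe $\partial P_C$ through the matrices $\bm M_i$.

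For that step you do not need null-set invariance of the generalized Jacobian. At any differentiability point $\bm x_k$, the pieces with empty interior lie in finitely many hyperplanes, so some $R_i$ with nonempty interior has $\bm x_k\in\texttt{cl}\,R_i$. The line-segment principle then gives an open set of directions $\bm v$ with $\bm x_k+t\bm v\in\mathrm{int}\,R_i$ for $t\in(0,1]$. Together with Corollary~\ref{Corollary1}(i), this forces $\mathcal J P_C(\bm x_k)=\bm M_i$. This is exactly the justification your fallback of restricting to full-dimensional pieces would need.
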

	\begin{proof}
		Suppose $\bm z, \bm d\in\mathbb{R}^n$ are fixed, and the sequence $h_k\downarrow 0$ is arbitrary. For any $k\in\mathbb{N}$, let $\bm z_k:=\bm z + h_k \bm d$, and $\boldsymbol{\xi}_k\in\partial \big(f\circ P_C\big)(\bm z_k) $.  By using the chain rule, we have
		\begin{align*}
			\partial \big(f\circ P_C\big)(\bm z_k)&\subseteq \texttt{conv} \left\{\boldsymbol{\xi}\in\mathbb{R}^n \,: \, \boldsymbol{\xi}=\bm G^T \bm w,  \bm G\in \partial P_C(\bm z_k) \, \text{and} \, \bm w\in \partial f\big(P_C(\bm z_k) \big)   \right\}\\&=:E(\bm z_k).
		\end{align*}
		As $E(\bm z_k)$ is a convex compact subset of $\mathbb{R}^n$, for each $\boldsymbol{\xi}_k\in E(\bm z_k)$, there exist $\hat{\boldsymbol{\xi}}_k, \bar{\boldsymbol{\xi}}_k\in E(\bm z_k)$  such that
		\begin{align}\label{T0-1}
			\bar{\boldsymbol{\xi}}_k^T \bm d:= \inf_{\boldsymbol{\xi}\in E(\bm z_k)} \boldsymbol{\xi}^T \bm d \leq \boldsymbol{\xi}_k^T \bm d\leq \sup_{\boldsymbol{\xi}\in E(\bm z_k)} \boldsymbol{\xi}^T \bm d:= 	\hat{\boldsymbol{\xi}}_k^T \bm d.
		\end{align}
		Since $\hat{\boldsymbol{\xi}}_k, \bar{\boldsymbol{\xi}}_k\in E(\bm z_k)$, there exist $\bar{\bm G}_k, \hat{\bm G}_k\in\partial P_C(\bm z_k)$ and $\bar{\bm w}_k, \hat{\bm w}_k\in\partial f\big(P_C(\bm z_k)   \big)$ such that
		\begin{equation}\label{T0-2}
			\hat{\boldsymbol{\xi}}_k= \hat{\bm G}_k^T \hat{\bm w}_k \quad \text{and} \quad \bar{\boldsymbol{\xi}}_k= \bar{\bm G}_k^T \bar{\bm w}_k. 
		\end{equation}
		Moreover, $\bm z_k\to \bm z $ as $k\to\infty$,  and hence the local boundedness of the subdifferential map together with its upper semicontinuity  implies the existence of $\mathcal{K}\subseteq\mathbb{N}$ along with $\bar{\bm G}, \hat{\bm G}\in\partial P_C(\bm z)$ and $\bar{\bm w}, \hat{\bm w}\in\partial f\big(P_C(\bm z)   \big)$ such that
		\begin{equation}\label{T0-3}
			\bar{\bm G}_k\to \bar{\bm G}, \quad \hat{\bm G}_k\to \hat{\bm G}, \quad \bar{\bm w}_k\to \bar{\bm w}, \quad \text{and} \quad \hat{\bm w}_k\to\hat{\bm w},
		\end{equation}
		as $k\xrightarrow{k\in\mathcal{K}}\infty$. On the other hand, by the semismoothness of the map $P_C$, we have 
		\begin{equation}\label{T0-4}
			\left(\hat{\bm G}_k^T \hat{\bm w}_k\right)^T \bm d \to \hat{\bm w}^T P_C'(\bm z; \bm d) \quad \text{and} \quad   \left(\bar{\bm G}_k^T \bar{\bm w}_k\right)^T \bm d \to \bar{\bm w}^T P_C'(\bm z; \bm d),
		\end{equation} 
		as $k\xrightarrow{k\in\mathcal{K}}\infty$. Next, in view of Lemma~\ref{L2'},  there exists $\bar{j}\in\{1,2,\ldots, N\}$ and $\bar k\in\mathbb{N}$ such that $\bm z_k\in R_{\bar{j}}$, for all $k\geq\bar k$. Hence,  Lemma~\ref{L1} implies   $P_C(\bm z_k)=\bm M_{\bar{j}} \bm z_k+\bm c_{\bar{j}}$, for all $k\geq\bar k$. In addition, $\bm z\in \texttt{cl} R_{\bar{j}}$, and part (i) of Corollary~\ref{Corollary1} gives $P_C(\bm z)=\bm M_{\bar{j}} \bm z+\bm c_{\bar{j}}$. Consequently,
		\begin{equation}\label{T0-5}
			P_C'(\bm z; \bm d)=\lim_{k\to\infty} \frac{P_C(\bm z_k)-P_C(\bm z)}{h_k}= \bm M_{\bar{j}} \bm d.
		\end{equation}
		Now, we conclude from \eqref{T0-1}, \eqref{T0-2}, \eqref{T0-3}, \eqref{T0-4}, and \eqref{T0-5} that
		\begin{equation}\label{T0-6}
			\bar{\bm w}^T \bm M_{\bar{j}} \bm d \leq \liminf_{k\to\infty} \boldsymbol{\xi}_k^T \bm d \leq \limsup_{k\to\infty} \boldsymbol{\xi}_k^T \bm d \leq 	\hat{\bm w}^T \bm M_{\bar{j}} \bm d .
		\end{equation}
		Next, we show that    $\lim_{k\to \infty} \boldsymbol{\xi}_k^T \bm d$ exists. To this end, in virtue of \eqref{T0-6}, it is sufficient to prove  $\hat{\bm w}^T \bm M_{\bar{j}} \bm d=\bar{\bm w}^T \bm M_{\bar{j}} \bm d $. Notice that
		\begin{equation*}
			P_C(\bm z_k)= \bm M_{\bar{j}} (\bm z+h_k \bm d) + \bm c_{\bar{j}}= P_C(\bm z) + h_k \bm M_{\bar{j}} \bm d, \quad \forall \, k\geq \bar{k}.
		\end{equation*} 
		Thus, $\bar{\bm w}_k, \hat{\bm w}_k\in\partial f\big(P_C(\bm z_k)\big)=\partial f\big(P_C(\bm z)+h_k\bm M_{\bar{j}} \bm d\big)$, for all $k\geq \bar{k}$. Therefore, weakly semismoothness of the function $f$ implies that the sequences $\{\bar{\bm w}_k^T \bm M_{\bar{j}} \bm d   \}$ and $\{\hat{\bm w}_k^T \bm M_{\bar{j}} \bm d   \}$ have exactly the same limit, which, in turn, are $\bar{\bm w}^T \bm M_{\bar{j}} \bm d$ and $\hat{\bm w}^T \bm M_{\bar{j}} \bm d$. Consequently,
		$$\bar{\bm w}^T \bm M_{\bar{j}} \bm d=\hat{\bm w}^T \bm M_{\bar{j}} \bm d.$$
		Eventually, since the limit of the sequence $\{ \boldsymbol{\xi}_k^T \bm d\}$ was independent of our choice of $\boldsymbol{\xi}_k\in\partial\big(f\circ P_C\big)(\bm z_k)$, the proof is complete.
	\end{proof}

	\section{ Stationary Points and the Projection Operator}\label{Stat-Proj}
	We now return to the main problem 
	\begin{equation}\label{Main_Problem-1}
		\min \, f(\bm x) \quad \text{s.t.} \quad \bm x\in C,
	\end{equation}
	where $f:\mathbb{R}^n\to\mathbb{R}$ is a weakly semismooth function, and $C\subseteq\mathbb{R}^n$ is a nonempty  polyhedral set given by \eqref{Polyhedral_Set}. For a point $\bm x^*\in C$ to be a local minimizer of problem~\eqref{Main_Problem-1}, it is necessary that \cite{Clarke1990} 
	\begin{equation}\label{stationarity-condition}
		f^\circ(\bm x^*; \bm x-\bm x^*)\geq 0, \quad \forall\, \bm x\in C.
	\end{equation}
	The point $\bm x^*\in C$ that satisfies the above condition is called a \emph{stationary} point. In the following theorem, we characterize stationary points of problem~\eqref{Main_Problem-1} based on the projection operator. Before it, let $\mathcal{T}_C(\bm x)$ and $\mathcal{N}_C(\bm x)$ denote the tangent and normal cones to the convex set $C$ at the point $\bm x\in C$, respectively \cite{Rockafellar2004}.
	
	\begin{theorem}\label{T2}
		A point $\bm x^*\in C$ is a stationary point for problem~\eqref{Main_Problem-1} if and only if there exist $\boldsymbol{\xi}^*\in\partial f(\bm x^*)$ and $t>0$ such that
		\begin{equation}\label{projection-stationary}
			\bm x^*=P_C(\bm x^*-t\boldsymbol{\xi}^*).
		\end{equation} 
		
	\end{theorem}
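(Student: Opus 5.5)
The plan is to translate both sides of the equivalence into the same normal-cone inclusion, $\bm 0\in\partial f(\bm x^*)+\mathcal{N}_C(\bm x^*)$, and compare them there.

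\emph{Projection condition to normal cone.} By the variational characterization \eqref{First-Pro-Theorem}, with $\bm x=\bm x^*-t\boldsymbol{\xi}^*$ and $\bm y=\bm x^*$, the identity \eqref{projection-stationary} holds if and only if
\[
(\bm z-\bm x^*)^T(-t\boldsymbol{\xi}^*)\leq 0 \quad \text{for all } \bm z\in C .
\]
Since $t>0$, this is equivalent to $\boldsymbol{\xi}^{*T}(\bm z-\bm x^*)\geq 0$ for all $\bm z\in C$, that is, $-\boldsymbol{\xi}^*\in\mathcal{N}_C(\bm x^*)$. In particular, the value of $t>0$ plays no role.

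\emph{Sufficiency.} Suppose \eqref{projection-stationary} holds with some $\boldsymbol{\xi}^*\in\partial f(\bm x^*)$ and $t>0$. By the support-function formula \eqref{support-functional}, for every $\bm x\in C$,
\[
f^\circ(\bm x^*;\bm x-\bm x^*)\geq \boldsymbol{\xi}^{*T}(\bm x-\bm x^*)\geq 0 .
\]
This is exactly the stationarity condition \eqref{stationarity-condition}.

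\emph{Necessity.} This is the main step, because we must produce a single subgradient that works for all directions at once. Assume \eqref{stationarity-condition} holds. By convexity of $C$, every $\bm d\in\mathcal{T}_C(\bm x^*)$ is a limit of positive multiples of vectors $\bm x-\bm x^*$ with $\bm x\in C$. The function $f^\circ(\bm x^*;\cdot)$ is positively homogeneous and continuous, since it is finite and convex. Hence $f^\circ(\bm x^*;\bm d)\geq 0$ for all $\bm d\in\mathcal{T}_C(\bm x^*)$. (Since $C$ is polyhedral, the tangent cone is in fact just the cone of feasible directions, so this closure step is almost trivial.)

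Now suppose, for contradiction, that $\partial f(\bm x^*)\cap\big(-\mathcal{N}_C(\bm x^*)\big)=\emptyset$. Equivalently, $\bm 0\notin\partial f(\bm x^*)+\mathcal{N}_C(\bm x^*)$. This set is closed and convex, being the sum of a compact convex set and a closed convex cone. The strict separation theorem then gives $\bm d$ with
\[
\boldsymbol{\xi}^T\bm d+\bm v^T\bm d<0 \quad \text{for all } \boldsymbol{\xi}\in\partial f(\bm x^*),\ \bm v\in\mathcal{N}_C(\bm x^*) .
\]
Because $\mathcal{N}_C(\bm x^*)$ is a cone, this forces $\bm v^T\bm d\leq 0$ for all such $\bm v$. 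Hence $\bm d\in\mathcal{N}_C(\bm x^*)^\circ=\mathcal{T}_C(\bm x^*)$. Taking $\bm v=\bm 0$ gives $\max_{\boldsymbol{\xi}\in\partial f(\bm x^*)}\boldsymbol{\xi}^T\bm d<0$, where the maximum is attained by compactness. By \eqref{support-functional} this means $f^\circ(\bm x^*;\bm d)<0$, contradicting the previous paragraph.

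Therefore some $\boldsymbol{\xi}^*\in\partial f(\bm x^*)$ satisfies $-\boldsymbol{\xi}^*\in\mathcal{N}_C(\bm x^*)$. By the first step, \eqref{projection-stationary} then holds for this $\boldsymbol{\xi}^*$ and every $t>0$.

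The only delicate points are the closedness of $\partial f(\bm x^*)+\mathcal{N}_C(\bm x^*)$, which relies on the compactness of $\partial f(\bm x^*)$, and the polarity $\mathcal{N}_C^\circ=\mathcal{T}_C$ for the closed convex set $C$. An equivalent alternative would be to invoke a minimax theorem on $\min_{\bm d}\max_{\boldsymbol{\xi}}$ over compact $\partial f(\bm x^*)$ and the convex set $C-\bm x^*$ intersected with a ball.
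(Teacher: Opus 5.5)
Your proposal is correct and follows the paper's proof. Both directions go through the variational characterization \eqref{First-Pro-Theorem}, sufficiency uses the support-function formula \eqref{support-functional}, and necessity uses the inclusion $\mathbf{0}\in\partial f(\mathbf{x}^*)+\mathcal{N}_C(\mathbf{x}^*)$. The one difference is that you justify that inclusion explicitly, via closedness of the sum, strict separation and the polarity $\mathcal{N}_C(\mathbf{x}^*)^\circ=\mathcal{T}_C(\mathbf{x}^*)$, whereas the paper simply asserts it after noting $f^\circ(\mathbf{x}^*;\mathbf{d})\geq 0$ for all $\mathbf{d}\in\mathcal{T}_C(\mathbf{x}^*)$.
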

	\begin{proof}
		Assume first that, for some $\boldsymbol{\xi}^*\in\partial f(\bm x^*)$ and $t>0$, we have
		$$\bm x^*=P_C(\bm x^*-t\boldsymbol{\xi}^*).$$
		Thus, in virtue of~\eqref{First-Pro-Theorem}, one can write
		\begin{equation*}
			{\boldsymbol{\xi}^*}^T(\bm x-\bm x^*)\geq 0, \quad \forall\, \bm x\in C,
		\end{equation*}
		which means
		$$f^\circ(\bm x^*; \bm x-\bm x^*)=	\max \left\{\boldsymbol{\xi}^T (\bm x-\bm x^*) \,:\, \boldsymbol{\xi}\in\partial f(\bm x^*)  \right\}\geq 0, \quad \forall\, \bm x\in C,$$
		yielding the stationarity of the point $\bm x^*$.
		
		Next, suppose $\bm x^*$ is a stationary point for problem~\eqref{Main_Problem-1}. Since $C\subseteq\mathbb{R}^n$ is a convex set, we know that stationarity condition~\eqref{stationarity-condition} is equivalent to $$f^\circ(\bm x^*; \bm d)\geq 0, \quad \forall \, \bm d\in\mathcal{T}_C(\bm x^*), $$
		and hence $\bm 0\in\partial f(\bm x^*)+\mathcal{N}_C(\bm x^*)$. Therefore, there exists $\boldsymbol{\xi}^*\in\partial f(\bm x^*)$ such that $-\boldsymbol{\xi}^*\in\mathcal{N}_C(\bm x^*)$. Thus,
		\begin{equation*}
			{\boldsymbol{\xi}^*}^T(\bm x-\bm x^*)\geq 0, \quad \forall \, \bm x\in C.
		\end{equation*}
		In view of~\eqref{First-Pro-Theorem}, the latter inequality is equivalent to the fact that $$\bm x^*=P_C(\bm x^*-t\boldsymbol{\xi}^*), \quad \forall \, t>0,$$ 
		which completes the proof.
	\end{proof}

	\section{ Derivation of the Algorithm}\label{Deivation}
	The main aim of this section is to develop a descent iterative algorithm in order to find a stationary point for problem~\eqref{Main_Problem-1}. To this end, in the light of Theorem~\ref{T2}, we need to find a point $\bm x^*\in C$ which satisfies condition~\eqref{projection-stationary}.

	In nonsmooth optimization, it is well known that an effective descent direction for a function $f:\mathbb{R}^n\to\mathbb{R}$ at $\bm{x}\in\mathbb{R}^n$ can be obtained by computing the least-norm element of the Clarke $\varepsilon$-subdifferential of $f$ at $\bm{x}$. More precisely, for some $\varepsilon>0$, let $\boldsymbol{\xi}^*\neq\bm{0}$ be an optimal solution of the following minimization problem:
	\begin{equation}\label{subproblem-1}
		\min \left\{ \lVert\boldsymbol{\xi}\rVert : \boldsymbol{\xi}\in\partial_\varepsilon f(\bm{x}) \right\}.
	\end{equation}
	Then, the vector $\bm{d}:=-\boldsymbol{\xi}^*$ provides an effective descent direction for $f$ at $\bm{x}$, and is sometimes referred to as the $\varepsilon$-steepest descent direction \cite{Burke2005}. However, computing the entire $\varepsilon$-subdifferential $\partial_\varepsilon f(\bm{x})$ can be computationally demanding in many practical situations.
	
	Regarding our main minimization problem
	
	$$
	\min \,\, f(\bm{x}) \quad \text{s.t.} \quad \bm{x}\in C\subset \mathbb{R}^n,
	$$
	in order to find an effective search direction for $f$ at $\bm{x}\in C$, we propose an iterative process that \emph{sequentially} improves the current approximation of $\partial_\varepsilon f(\bm{x})$ over the feasible region $C$. 
	We shall show that this process either yields a search direction that produces a projected serious step with a significant reduction in the objective function, or, if it generates an infinite sequence of improvements at the current point, establishes that the current point is stationary.

	% with the aim of taking a projected serious step that yields a significant reduction in the objective function.
	
	% The process is continued until either a projected step along the resulting search direction yields a significant reduction in the objective function or it turns out that the current point is indeed an accurate approximation of a stationary point.

	Throughout this section, parallel to what is standard in smooth optimization,  it is assumed that we have a subroutine that can evaluate $f(\bm x)$ and one arbitrary subgradient $\boldsymbol{\xi}\in\partial f(\bm x)$, at each $\bm x\in C$. 
	
	\subsection{  Taking a Serious Step} \label{subsec1}
	
	Suppose we are at the $k$-th iteration of the method, and $\varepsilon_k\in(0, 1)$ is the current radius of the region within which we collect the subgradient information of the objective function. For the current point $\bm x_k\in C\subseteq\mathbb{R}^n$ and $\alpha_k:=\frac{\varepsilon_k}{\sqrt{n}}>0$, define
	\begin{equation}
		C_k:=C\cap\left\{\bm x\in\mathbb{R}^n \,\,: \,\, \lVert \bm x-\bm x_k \rVert_\infty\leq \alpha_k   \right\}.
	\end{equation}
	Obviously, $\bm x_k\in C_k$, and hence $C_k$ is a nonempty polyhedral set.

	Let $\boldsymbol{\xi}_k\in\partial f(\bm y)$, for some $\bm y\in C_k$, and denote by $G_k$ the current bundle of subgradients satisfying
	\begin{equation}
		\boldsymbol{\xi}_k\in G_k\qquad \text{and} \qquad G_k\subseteq \partial_{\varepsilon_k} f(\bm x_k).
	\end{equation}
	Assume that $\bm g^*_k$ is the least-norm element of $\texttt{conv}\{G_k\}\subseteq\partial_{\varepsilon_k} f(\bm x_k)$. In other words
	\begin{equation}\label{least-norm}
		\bm g^*_k:=\text{argmin} \big\{\lVert \bm g\lVert \,\, :\,\, \bm g\in \texttt{conv}\{G_k\}    \big\}.
	\end{equation}
	Next, consider the map $W:\mathbb{R}_+\times \mathbb{N}_0\to\mathbb{R}^n$ which is given by
	\begin{equation}
		W(t,k):= \frac{1}{t}\big(\bm x_k-P_C(\bm x_k-t\bm g^*_k)\big).
	\end{equation}
	Let us proceed with assuming that, for some parameter $\tilde{t}\in(0,1]$, the norm of $W(\tilde{t}, k)$ is sufficiently large, i.e., for some $\delta_k>0$, we have
	$$\left\lVert  W(\tilde t, k) \right\rVert>\delta_k,$$
	(we shall soon specify a value to the parameter $\tilde{t}$; see \eqref{t-tilde}). Then, $\lVert \bm g^*_k\rVert\neq 0$, and  one can define the normalized search direction $\bm d_k$ by $\bm d_k:=- \bm g^*_k/\rVert \bm g^*_k\rVert$. In case $\texttt{conv}\{G_k\}$ is an adequate inner approximation of the Clarke $\varepsilon_k$-subdifferential of $f$ at $\bm x_k$ over the feasible region $C$, one can employ a limited backtracking line search along the search direction $\bm d_k$ to take a projected serious step.
	Such a line search procedure has been presented in Algorithm~\ref{exp-limited-line-search}.

	\begin{algorithm}
		\caption{An Exponential  Limited Backtracking Line Search}
		\label{exp-limited-line-search}
		\hspace*{\algorithmicindent}\textbf{Inputs:} Radius $\varepsilon_k\in(0,1)$, current point $\bm x_k\in C$, and search direction ${\bm d}_k=-\bm g^*_k/\lVert \bm g^*_k\rVert$ with $\bm g^*_k\neq \bm 0$. \\
		\hspace*{\algorithmicindent}\textbf{Parameters:} Scale parameters  $p\in\mathbb{N}$ and $\hat{t}\in(0, 1)$, maximum number of backtrack steps $S_{\max}\in\mathbb{N}$ such that $S_{\max}>p$, $\tilde t:=[\exp(\frac{\log \hat{t}}{p})]^{S_{\max}} $, and sufficient decrease parameter $\beta\in(0,1)$.  \\
		\hspace*{\algorithmicindent}\textbf{Outputs:} Step size $t_k\geq 0$, and indicator $I_k\in\{ 0, 1, 2\}$.\\
		%\hspace*{\algorithmicindent}\textbf{Requirements:} nothing \\
		
		%\hspace*{\algorithmicindent}\textbf{Function:} $\{\bm s, {\rm \bm I}\}$\,\,=\,\,\texttt{T-PLS}\,($\varepsilon$, $\mathbf x$, ${\bm d}$)
		
		\begin{algorithmic}[1]
			\STATE{\textbf{Initializations:} Set $I_k:=0$ and $t_k:=0$ ;}
			\FOR{$s=0, 1, 2,\ldots,S_{\max}$}
			\STATE{Set $ t:=[\exp(\frac{\log \hat{t}}{p})]^{s}$ ;}
			\IF{$s\neq p\,\, \text{and} \,\,f\big(P_C(\bm x_k + t \bm d_k) \big)-f\left(\bm x_k \right)\leq - \beta t \left\lVert W(\tilde t, k) \right\rVert^2  $ }
			\STATE{ Set $t_k:=t, I_k:=1$, and \textbf{STOP} ;}
			%\STATE{ \textbf{Break} ;  }
			\ENDIF
			\IF{$s= p\,\, \text{and} \,\,f\big(P_{C_k}(\bm x_k + t \bm d_k) \big)-f\left(\bm x_k \right)\leq - \beta t \left\lVert W(\tilde t, k) \right\rVert^2  $ }
			\STATE{ Set $t_k:=t, I_k:=2$, and \textbf{STOP} ;}
			%\STATE{ \textbf{break} ;  }
			\ENDIF
			
			\ENDFOR

		\end{algorithmic}
		%\hspace*{\algorithmicindent}\textbf{ End Function}
	\end{algorithm}
	
	Regarding Algorithm~\ref{exp-limited-line-search}, some explanations are necessary. Since $\hat{t}\in(0, 1)$ and $p\in\mathbb{N}$, we have $\frac{\log \hat{t}}{p}<0$, and hence 
	\begin{equation}\label{decrease-property}
		0<\left[\exp\left(\frac{\log \hat{t}}{p}\right) \right]^{s+1}<\left[\exp\left(\frac{\log \hat{t}}{p}\right) \right]^{s}\leq 1, \quad \forall\,\, s\in\mathbb{N}_0.
	\end{equation}
	Therefore, if Algorithm~\ref{exp-limited-line-search} terminates with $I_k=1$ or $I_k=2$, we have
	\begin{equation}\label{t-tilde}
		1\geq t_k\geq \tilde{t}:=\left[\exp\left(\frac{\log \hat{t}}{p}\right) \right]^{S_{\max}}>0.
	\end{equation}
	Moreover, in view of \eqref{decrease-property}, the  set
	$$\mathcal{G}:=\left\{ \left[\exp\left(\frac{\log \hat{t}}{p}\right)\right]^{s} \, : \, s\in\{0,1,\ldots,S_{\max}\}   \right\},$$ provides a grid over the interval $(0,1]$. Clearly, by increasing the maximum number of backtracking steps $S_{\max}$, one can include more points in the grid. Moreover, a large value of the parameter $p$ concentrates the grid points toward the right-hand side of the interval $(0,1]$, whereas small values for $\hat{t}\in(0,1)$ cause the grid points to accumulate near the left-hand side of the interval $(0,1]$. The reason for using a different sufficient decrease condition when $s=p$ will become clear in Algorithm~\ref{projected-mifflin-line search} and its convergence analysis.
	
	In case Algorithm~\ref{exp-limited-line-search} terminates with $I_k=1$, we employ the step size $t_k>0$ to take a serious step, i.e., we set 
	\begin{equation}\label{serious_step1}
		\bm x_{k+1}:=P_C(\bm x_k+t_k \bm d_k),
	\end{equation}
	and, for the case $I_k=2$, we set
	\begin{equation}\label{serious_step2}
		\bm x_{k+1}:=P_{C_k}(\bm x_k+t_k \bm d_k).
	\end{equation}
	After taking a serious step, we compute an arbitrary subgradient $\boldsymbol{\xi}_{k+1}\in\partial f(\bm x_{k+1})$, and update the bundle of subgradients by $G_{k+1}:=\{\boldsymbol{\xi}_{k+1}  \}$.
	Eventually, we put $\varepsilon_{k+1}:=\varepsilon_k$, $\delta_{k+1}:=\delta_k$, increment $k$ by one, and repeat the above process.

	\begin{remark}
		In this subsection, we focused on the structure of the proposed algorithm at the $k$-th iteration under the assumption that, for some $\delta_k>0$, the vector $W(\tilde{t}, k)$ satisfies $\lVert W(\tilde{t}, k)\rVert>\delta_k$. Under this assumption, we considered the case in which Algorithm~\ref{exp-limited-line-search} terminates with $I_k\in\{1, 2\}$. The remaining cases, namely when Algorithm~\ref{exp-limited-line-search} terminates with $I_k=0$ or $\lVert W(\tilde{t}, k)\rVert\leq\delta_k$, are addressed in the next subsection.
		
	\end{remark}

	\subsection{ Taking a Null Step and Improving the Bundle of Subgradients}
	
	Suppose that we are at the $k$-th iteration and $\lVert W(\tilde{t}, k)\rVert>\delta_k$. We now consider the case in which Algorithm~\ref{exp-limited-line-search} terminates with $I_k=0$, that is, $t_k=0$.
	In this situation, we take a null step in order to append a new \emph{nonredundant} subgradient to the bundle of subgradients with the aim of improving our approximation of $\partial_{\varepsilon_k} f(\bm x_k)$ over the feasible region $C$. For this purpose, for some $\bm y\in C_k$, we compute $\boldsymbol{\xi}_{k+1}\in \partial f(\bm y)$  that satisfies
	\begin{equation}
		\boldsymbol{\xi}_{k+1}\notin \texttt{conv}\{G_k\}.
	\end{equation}
	Since $\bm g^*_k$ can be expressed by
	$\bm g^*_k=P_{\texttt{conv}\{G_k\}}(\bm 0),$
	it follows from~\eqref{First-Pro-Theorem} that
	\begin{equation*}
		\bm g^T \bm g^*_k\geq \lVert \bm g^*_k\rVert^2, \quad \forall \, \bm g\in \texttt{conv}\{G_k\}.
	\end{equation*}
	By noting that $\bm g^*_k\neq \bm 0$ and $\bm d_k=-\frac{\bm g^*_k}{\lVert\bm g^*_k \rVert}$, the above inequality can be rewritten as 
	\begin{equation}\label{non-redundant-condition}
		\bm g^T \bm d_k\leq -\lVert \bm g^*_k\rVert, \quad \forall \, \bm g\in \texttt{conv}\{G_k\}.
	\end{equation}
	Therefore, by computing $\boldsymbol{\xi}_{k+1}$ which satisfies 
	\begin{equation}\label{suff-cond}
		\boldsymbol{\xi}_{k+1} \bm d_k\geq  -c \lVert \bm g^*_k\rVert,
	\end{equation}
	for some $c\in(0,1)$, condition~\eqref{non-redundant-condition} ensures that $\boldsymbol{\xi}_{k+1}\notin \texttt{conv}\{G_k\}$. In this regard, in Algorithm~\ref{projected-mifflin-line search}, we develop a projected variant of Mifflin's line search \cite{kiwielbook} which employs the sufficient condition~\eqref{non-redundant-condition} to find a nonredundant subgradient. 
	
	\begin{algorithm}
		\caption{A Projected Subgradient Search}
		\label{projected-mifflin-line search}
		\hspace*{\algorithmicindent}\textbf{Inputs:}  Current point $\bm x_k\in C$,  search direction ${\bm d}_k=-\bm g^*_k/\lVert \bm g^*_k\rVert$ such that $\bm g^*_k\neq \bm 0$, along with $\hat{t}$ and $\tilde{t}$ as set in Algorithm~\ref{exp-limited-line-search}. \\
		\hspace*{\algorithmicindent}\textbf{Parameters:}  Sufficient decrease parameter $\beta\in(0,1)$ as chosen in Algorithm~\ref{exp-limited-line-search}, $c\in(0,1)$ with $\beta<c$, and reduction factor $r\in(0,0.5)$. \\
		\hspace*{\algorithmicindent}\textbf{Output:} A nonredundant subgradient $\boldsymbol{\xi}_{k+1}$.\\
		%\hspace*{\algorithmicindent}\textbf{Requirements:} nothing \\
		
		%\hspace*{\algorithmicindent}\textbf{Function:} $\{\bm s, {\rm \bm I}\}$\,\,=\,\,\texttt{T-PLS}\,($\varepsilon$, $\mathbf x$, ${\bm d}$)
		
		\begin{algorithmic}[1]
			\STATE{\textbf{Initialization:} Set $t_0:=\hat{t}\in(0, 1)$, compute arbitrary subgradient $\boldsymbol{\xi}_0\in \partial f\big(P_{C_k}(\bm x_k + t_0\bm d_k)\big)$, and set  $t^l_0=0$, $t^u_0:=1$, and $i:=0$ ;}
			\WHILE{$True$}
			
			\IF{$f\big(P_{C_k}(\bm x_k + t_i \bm d_k) \big)-f\left(\bm x_k \right)\leq - \beta t_i \left\lVert W(\tilde t, k) \right\rVert^2  $ }
			\STATE{ Set $t^l_{i+1}:=t_i$ and $ t^u_{i+1}:=t^u_i$ ;}
			\ELSE
			\STATE{ Set $t^l_{i+1}:=t^l_i$ and $ t^u_{i+1}:=t_i$ ;  }
			\ENDIF
			
			\IF{$\boldsymbol{\xi}_i^T \bm d_k \geq -c  \lVert \bm g^*_k\rVert$}
			\STATE{Set $\boldsymbol{\xi}_{k+1}:=\boldsymbol{\xi}_i$ and \textbf{STOP} ;}
			\ENDIF 
			\STATE{Choose $t_{i+1}\in \left[t^l_{i+1}+r (t^u_{i+1}-t^l_{i+1}), t^u_{i+1}-r (t^u_{i+1}-t^l_{i+1})  \right]$ ;}
			%\IF{$\bm x_k + t_{i+1}\bm d_k\notin C$ and $\eta_i\leq\eta_{\max}$    }
			%\STATE{Compute $\boldsymbol{\xi}_{i+1}\in\partial f(\bm x_k+t_{i+1} \bm d_k)$ and set $\eta_{i+1}:=\eta_i+1$ ;  }
			%\ELSE
			\STATE{ Compute $\boldsymbol{\xi}_{i+1}\in\partial f\big(P_{C_k}(\bm x_k+t_{i+1} \bm d_k)\big)$  ;  }
			%\ENDIF
			\STATE{Set $i:=i+1$ ;}

			\ENDWHILE
			
		\end{algorithmic}
		%\hspace*{\algorithmicindent}\textbf{ End Function}
	\end{algorithm}
	
	Regarding Algorithm~\ref{projected-mifflin-line search}, we provide some explanations. The first conditional block modifies the interval  within which we seek a suitable step size along the search direction $\bm d_k$ to find an effective  subgradient. The second one checks the sufficient condition~\eqref{suff-cond} for the trial subgradient $\boldsymbol{\xi}_i$. As discussed above, once this condition is met, we deduce  $\boldsymbol{\xi}_i\notin\texttt{conv}\{G_k \}$, and therefore, the algorithm is terminated to append $\boldsymbol{\xi}_{k+1}:=\boldsymbol{\xi}_i$ to the bundle of subgradients.
	% By the third conditional block, the algorithm is permitted, for a finite number of iterations, to collect subgradient information at points that are not necessarily contained within the feasible region. The reason behind this strategy is that, due to the use of the projection technique, it is often the case that $\bm x_k$ lies on the boundary of the feasible region, and therefore, to effectively explore $\partial_{\varepsilon_k} f(\bm x_k)$ we may need some subgradient information from the points which do not necessarily belong to the feasible region $C$. Due to some theoretical concerns, we keep the number of such iterations finite.

	In what follows, we show the finite convergence of Algorithm~\ref{projected-mifflin-line search}. To this end, we first need to explore some asymptotic behavior of this algorithm. In this respect, in the following discussion, it is assumed that Algorithm~\ref{projected-mifflin-line search} does not terminate, i.e., $i\to\infty$.
	By construction of this algorithm, one can easily verify that
	\begin{equation}\label{line-p-1}
		0\leq t^l_i\leq t^l_{i+1} < t^u_{i+1}\leq t^u_{i}\leq 1, 
	\end{equation}
	and
	\begin{equation}\label{line-p-2}
		t^u_{i+1}-t^l_{i+1}\leq (1-r)(t^u_i-t^l_i),
	\end{equation}
	for all $i\in\mathbb{N}$.
	% Moreover, by the definition of $C_k$ and $\alpha_k$, it is guaranteed that
	%\begin{equation*}
	%\lVert \bm x_k - P_{C_k}(\bm x_k + t_i \bm d_k)  \rVert\leq \sqrt{n} \lVert \bm x_k - P_{C_k}(\bm x_k + t_i \bm d_k)  \rVert_\infty\leq \sqrt{n}\alpha_k=\varepsilon_k,
	%\end{equation*}
	%for all $i\in\mathbb{N}_0$, which means
	%Moreover, it is guaranteed by \eqref{line-p-1} that $t_i\in(0,\varepsilon_k)$, for all $i\in\mathbb{N}$.
	%As a result, by noting that $\bm x_k\in C$ and $P_C$ is nonexpansive, we have
	%\begin{equation*}
	%	\lVert \bm x_k - P_C(\bm x_k + t_i \bm d_k)  \rVert=\lVert  P_C(\bm x_k) - P_C(\bm x_k + t_i \bm d_k)  \rVert\leq \lVert t_i \bm d_k\rVert = t_i\leq \varepsilon_k,
	%\end{equation*}
	%	for all $i\in\mathbb{N}$. This fact along with $\boldsymbol{\xi}_0\in \partial f\big(P_C(\bm x_k + \varepsilon_k\bm d_k)\big)$
	% implies 
	%\begin{equation}
	%\boldsymbol{\xi}_i\in\partial_{\varepsilon_k} f(\bm x_k), \quad \forall \, i\in\mathbb{N}_0.
	%\end{equation}
	It follows from~\eqref{line-p-1} that the sequences $\{t^u_i\}$ and $\{t^l_i\}$ are convergent. In addition, inequality~\eqref{line-p-2} together with the fact that $r\in(0, 0.5)$ ensures the sequence $\{t^u_i-t^l_i\}$ converges to zero. Consequently, there exists $\bar{t}\in[0, 1]$ such that $t^u_i\downarrow \bar{t}$ and $t^l_i\uparrow \bar{t}$, as $i\to\infty$.  By the first conditional block of the algorithm, we have $t_i\in\{t^l_{i+1}, t^u_{i+1}\}$, for all $i\in\mathbb{N}_0$, which means $t_i\to \bar{t}$, as $i\to\infty$. Next, define
	\begin{equation*}
		\mathrm T_{\text{lower}}:=\left\{t\geq 0 \,\,: \,\, f\big(P_{C_k}(\bm x_k + t \bm d_k)\big)-f(\bm x_k) \leq -\beta t \left\lVert W(\tilde t, k) \right\rVert^2   \right\}.
	\end{equation*} 
	Then, in view of the first conditional block, one can observe $t^l_i\in  \mathrm T_{\text{lower}}$, for all $i\in\mathbb{N}_0$, i.e.,
	\begin{equation*}
		f\left(P_{C_k}(\bm x_k + t^l_i \bm d_k)\right)-f(\bm x_k) \leq -\beta t^l_i \left\lVert W(\tilde{t}, k) \right\rVert^2, \quad \forall \, i\in\mathbb{N}_0.
	\end{equation*}
	Letting $i$ approach infinity in the above inequality, we obtain 
	\begin{equation*}
		f\big(P_{C_k}(\bm x_k + \bar t \bm d_k)\big)-f(\bm x_k) \leq -\beta \bar t \left\lVert   W(\tilde{t}, k)  \right\rVert^2,
	\end{equation*}
	yielding $\bar{t}\in  \mathrm T_{\text{lower}}$.
	
	\begin{lemma}\label{L3}
		Suppose  Algorithm~\ref{exp-limited-line-search} terminates with $I_k=0$, and  Algorithm~\ref{projected-mifflin-line search} does not terminate, i.e., $i\to\infty$ in this algorithm. Let 
		$$\hat{\mathrm{I}}:=\left\{i\in\mathbb{N}_0\,:\, t^u_{i+1}=t_i \,\, \text{\rm in Algorithm~\ref{projected-mifflin-line search}}  \right\}.$$
		Then, $\hat{\mathrm{I}}\subseteq\mathbb{N}_0$ is an infinite set.
	\end{lemma}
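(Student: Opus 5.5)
We argue by contradiction: suppose $\hat{\mathrm{I}}$ is finite. By the first conditional block of Algorithm~\ref{projected-mifflin-line search}, every index $i\notin\hat{\mathrm{I}}$ has $t^l_{i+1}=t_i$, i.e.\ $t_i\in\mathrm T_{\text{lower}}$. Every index $i\in\hat{\mathrm{I}}$ has $t^u_{i+1}=t_i$ with $t_i\notin\mathrm T_{\text{lower}}$, since the upper endpoint is moved only when the sufficient decrease test fails. We split into two cases: $\hat{\mathrm{I}}=\emptyset$ and $\hat{\mathrm{I}}$ finite but nonempty.

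\emph{Case $\hat{\mathrm{I}}=\emptyset$.} Here $0\notin\hat{\mathrm{I}}$, so at the very first pass the test succeeds at $t_0=\hat t$, that is,
\begin{equation*}
f\big(P_{C_k}(\bm x_k+\hat t\bm d_k)\big)-f(\bm x_k)\leq -\beta\hat t\left\lVert W(\tilde t,k)\right\rVert^2 .
\end{equation*}
On the other hand, in Algorithm~\ref{exp-limited-line-search} the index $s=p$ gives the trial step $t=[\exp(\frac{\log\hat t}{p})]^{p}=\hat t$. For this index the test uses exactly the projection $P_{C_k}$ and the same right-hand side $-\beta\hat t\lVert W(\tilde t,k)\rVert^2$. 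Since $S_{\max}>p$, this index is actually visited. Because Algorithm~\ref{exp-limited-line-search} terminated with $I_k=0$, this test failed, which contradicts the displayed inequality. This is precisely where the special treatment of $s=p$ in Algorithm~\ref{exp-limited-line-search} is used.

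\emph{Case $\hat{\mathrm{I}}$ finite and nonempty.} Let $i_1:=\max\hat{\mathrm{I}}$. Then $t^u_{i_1+1}=t_{i_1}$ and $t_{i_1}\notin\mathrm T_{\text{lower}}$. For every $i>i_1$ the upper endpoint is never changed again, so $t^u_i=t_{i_1}$ for all $i>i_1$. From the discussion preceding the lemma, $t^u_i\downarrow\bar t$, hence $\bar t=t_{i_1}$. That same discussion shows $\bar t\in\mathrm T_{\text{lower}}$: one passes to the limit in the sufficient decrease inequality along $t^l_i\uparrow\bar t$, using continuity of $f$ and of $P_{C_k}$. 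Thus $t_{i_1}\in\mathrm T_{\text{lower}}$, contradicting $t_{i_1}\notin\mathrm T_{\text{lower}}$.

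Both cases lead to a contradiction, so $\hat{\mathrm{I}}$ is infinite. The only delicate point is the empty case, because there the limit $\bar t=t^u_0=1$ may well belong to $\mathrm T_{\text{lower}}$, so the limit argument alone gives no contradiction. One must instead invoke the failure of the $s=p$ test in Algorithm~\ref{exp-limited-line-search}, which checks the same point $t=\hat t$ with the same projection $P_{C_k}$ used by Algorithm~\ref{projected-mifflin-line search}.
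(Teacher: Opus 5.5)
Your proof is correct and matches the paper's argument: nonemptiness of $\hat{\mathrm{I}}$ comes from the failed $s=p$ test in Algorithm~\ref{exp-limited-line-search}, which checks the same point $t=\hat t$ with the same projection $P_{C_k}$. Infinitude comes from showing that the last change of the upper endpoint forces $\bar t\notin\mathrm T_{\text{lower}}$; you use the index $\max\hat{\mathrm{I}}$, while the paper uses the first index at which $t^u_i$ reaches $\bar t$, which is the same index shifted by one.
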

	\begin{proof}
		First, we show  $\hat{\mathrm{I}}\subseteq\mathbb{N}_0$ is a nonempty set. Suppose for contradiction that it is an empty set. Then, in virtue of the first conditional block of Algorithm~\ref{projected-mifflin-line search}, one may write
		\begin{equation}\label{L3-1}
			f\big(P_{C_k}(\bm x_k + t_i \bm d_k)\big)-f(\bm x_k)\leq -\beta  t_i\left\lVert W(\tilde{t}, k) \right\rVert^2,
			\quad \forall \, i\in\mathbb{N}_0.
		\end{equation}
		In particular, for $ i=0$, we have $t_0=\hat{t}$ in Algorithm~\ref{projected-mifflin-line search}, and therefore
		\begin{equation}\label{L3-2}
			f\big(P_{C_k}(\bm x_k + \hat{t} \bm d_k)\big)-f(\bm x_k)\leq -\beta \hat t\left\lVert W(\tilde{t}, k) \right\rVert^2.
		\end{equation}
		On the other hand, when $s=p$ in Algorithm~\ref{exp-limited-line-search}, we have 
		$$t=\left[ \exp\frac{\log \hat{t}}{p}    \right]^p=\hat{t}.$$
		Thus, \eqref{L3-2} ensures that Algorithm~\ref{exp-limited-line-search} terminates with $I_k=2$, which is a contradiction.  Next, we prove that $\hat{\mathrm{I}}$ is infinite. By indirect proof, suppose that $\hat{\mathrm{I}}$ is a finite set. Then, as $\hat{\mathrm{I}}\neq\emptyset$ and $t^u_i\downarrow \bar t$, as $i\to\infty$, there exists $\bar i\in\mathbb{N}$ such that  
		\begin{equation*}
			t^u_i=\bar t, \quad \forall \, i\geq \bar{i} \quad \text{and} \quad t^u_i>\bar{t}, \quad \forall \, i<\bar{i}. 
		\end{equation*}
		Consequently, we have $t^u_{\bar i}=t_{\bar i -1}=\bar{t}$, and hence 
		\begin{equation*}
			f\big(P_{C_k}(\bm x_k + \bar{t} \bm d_k)\big)-f(\bm x_k) > -\beta \bar t\left\lVert W(\tilde{t}, k) \right\rVert^2,
		\end{equation*} 
		violating the fact that $\bar{t}\in\mathrm{T}_{\text{lower}}$.
	\end{proof}

	Now, we are ready to show the finite convergence of Algorithm~\ref{projected-mifflin-line search}.
	
	\begin{theorem}
		Assume that $f:\mathbb{R}^n\to\mathbb{R}$ is a weakly semismooth function. If Algorithm~\ref{exp-limited-line-search} terminates with $I_k=0$, then Algorithm~\ref{projected-mifflin-line search} terminates after a finite number of iterations.
	\end{theorem}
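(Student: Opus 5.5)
Suppose, toward a contradiction, that Algorithm~\ref{projected-mifflin-line search} does not terminate. Then the asymptotic facts established before Lemma~\ref{L3} hold: $t^l_i\uparrow\bar t$, $t^u_i\downarrow\bar t$, $t_i\to\bar t$, and $\bar t\in\mathrm T_{\text{lower}}$. By Lemma~\ref{L3}, the index set $\hat{\mathrm I}$ is infinite.

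Set $\phi(t):=f\big(P_{C_k}(\bm x_k+t\bm d_k)\big)$ and $\gamma:=\beta\lVert W(\tilde t,k)\rVert^2$. For $i\in\hat{\mathrm I}$ we have $t_i=t^u_{i+1}>\bar t$ and $\phi(t_i)-f(\bm x_k)>-\gamma t_i$. Since $\bar t\in\mathrm T_{\text{lower}}$, subtracting the two inequalities gives
\begin{equation*}
\frac{\phi(t_i)-\phi(\bar t)}{t_i-\bar t}>-\gamma,\qquad i\in\hat{\mathrm I}.
\end{equation*}
So the difference quotients from the right at $\bar t$ are bounded below by $-\gamma$ along a sequence $h_i:=t_i-\bar t\downarrow 0$.

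The idea is to link these quotients to $\boldsymbol{\xi}_i^T\bm d_k$ using weak semismoothness. Write $\bm z:=\bm x_k+\bar t\bm d_k$. Then $\phi(\bar t+h)=(f\circ P_{C_k})(\bm z+h\bm d_k)$. Because $C_k$ is polyhedral, Theorem~\ref{T0} makes $f\circ P_{C_k}$ weakly semismooth, hence directionally differentiable at $\bm z$ in direction $\bm d_k$. It follows that $\phi'(\bar t^+)=\lim_{h\downarrow0}\bm V\bm d_k$ over $\bm V\in\partial(f\circ P_{C_k})(\bm z+h\bm d_k)$, and the displayed inequality yields $\phi'(\bar t^+)\ge-\gamma$.

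The main obstacle is that $\boldsymbol{\xi}_i$ is a subgradient of $f$ at $P_{C_k}(\bm z+h_i\bm d_k)$, not of the composition. The proof of Theorem~\ref{T0} suggests how to handle this. By Lemma~\ref{L2'} applied to $C_k$, for large $i\in\hat{\mathrm I}$ the points $\bm z+h_i\bm d_k$ lie in one region $R_{\bar j}$. There
\begin{equation*}
P_{C_k}(\bm z+h_i\bm d_k)=P_{C_k}(\bm z)+h_i\bm M_{\bar j}\bm d_k,
\end{equation*}
so $\phi(\bar t+h)=f(\bm y+h\bm u)$ with $\bm y:=P_{C_k}(\bm z)$ and $\bm u:=\bm M_{\bar j}\bm d_k$. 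Weak semismoothness of $f$ then gives $\boldsymbol{\xi}_i^T\bm u\to f'(\bm y;\bm u)=\phi'(\bar t^+)\ge-\gamma$.

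To conclude, we must pass from $\boldsymbol{\xi}_i^T\bm u$ to $\boldsymbol{\xi}_i^T\bm d_k$. The natural route is to show $\bm u=\bm d_k$, or at least $\boldsymbol{\xi}^T\bm u\le\boldsymbol{\xi}^T\bm d_k$ in the limit. When $\bm M_{\bar j}=\bm I_n$ (inactive constraints) this is immediate. When the region is a face, $\bm M_{\bar j}$ is an orthogonal projector; here I would try to exploit that $\lVert W(\tilde t,k)\rVert$ measures the projected direction, that $\bm u^T\bm d_k=\lVert\bm u\rVert^2$, and that $\gamma\le\beta\lVert\bm g^*_k\rVert$ up to the normalization of $\bm d_k$. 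I expect this step to be the genuine difficulty and to possibly require using that $\bar t$ lies where the box part of $C_k$ is inactive. Granting it, we obtain
\begin{equation*}
\lim_{i}\boldsymbol{\xi}_i^T\bm d_k\ge-\gamma\ge-\beta\lVert\bm g^*_k\rVert>-c\lVert\bm g^*_k\rVert,
\end{equation*}
using $\beta<c$. Thus the stopping test $\boldsymbol{\xi}_i^T\bm d_k\ge-c\lVert\bm g^*_k\rVert$ holds for large $i\in\hat{\mathrm I}$, which contradicts non-termination.
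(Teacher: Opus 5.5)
The gap is the step you flag and then grant: going from $\lim_{i\in\hat{\mathrm I}}\boldsymbol{\xi}_i^T\bm M_{\bar j}\bm d_k=\phi'(\bar t^+)\ge-\gamma$ to a lower bound on $\boldsymbol{\xi}_i^T\bm d_k$. Everything before that step is correct. It is also more careful than the paper, whose proof asserts $(f\circ P_{C_k})'(\bm z_k;\bm d_k)=\lim_i\boldsymbol{\xi}_i^T\bm d_k$ from weak semismoothness of $f\circ P_{C_k}$. But $\boldsymbol{\xi}_i\in\partial f\big(P_{C_k}(\bm z_k+h_i\bm d_k)\big)$ is not a subgradient of the composition. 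Weak semismoothness only controls the chain-rule quantity $(\bm M_{\bar j}\boldsymbol{\xi}_i)^T\bm d_k$.

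Your step cannot be supplied, because nothing bounds the normal part $\boldsymbol{\xi}_i^T(\bm I_n-\bm M_{\bar j})\bm d_k$; in fact the statement fails. Take:
\begin{itemize}
\item $n=2$, $C=\{\bm x:x_2\le 0\}$, $\bm x_k=\bm 0$;
\item $f(\bm x)=\max\{-10x_2,\,-(x_1+x_2)/\sqrt2\}$, which is piecewise linear, hence semismooth;
\item $G_k=\{\boldsymbol{\xi}_k\}$ with $\boldsymbol{\xi}_k=-(1,1)^T/\sqrt2\in\partial f(\bm 0)$.
\end{itemize}
Then $\bm g^*_k=\boldsymbol{\xi}_k$, $\lVert\bm g^*_k\rVert=1$, $\bm d_k=(1,1)^T/\sqrt2$ and $W(\tilde t,k)=-(1/\sqrt2,0)^T$. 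So $\lVert W(\tilde t,k)\rVert>\delta_k$ for, e.g., the paper's $\delta_0=0.5$.

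For every $t\in(0,1]$, both $P_C(t\bm d_k)$ and $P_{C_k}(t\bm d_k)$ have the form $(s,0)^T$ with $s>0$, where $f=0=f(\bm x_k)$. So every Armijo test fails and Algorithm~\ref{exp-limited-line-search} returns $I_k=0$. In Algorithm~\ref{projected-mifflin-line search} every trial point is again such an $(s,0)^T$, where $f$ is differentiable with $\nabla f=(0,-10)^T$. Hence $\boldsymbol{\xi}_i^T\bm d_k=-10/\sqrt2<-c\lVert\bm g^*_k\rVert$ for all $i$ and every $c\in(0,1)$, and the loop never stops, even though each $\boldsymbol{\xi}_i$ is nonredundant. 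Here $\boldsymbol{\xi}_i^T\bm M_{\bar j}\bm d_k=0=\phi'(0^+)\ge-\gamma$, which is exactly what your argument yields and nothing more.

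Your closing chain has a second, independent error. Nonexpansiveness gives only $\gamma=\beta\lVert W(\tilde t,k)\rVert^2\le\beta\lVert\bm g^*_k\rVert^2$, not $\gamma\le\beta\lVert\bm g^*_k\rVert$. The final comparison therefore needs $\beta\lVert\bm g^*_k\rVert<c$, which does not follow from $\beta<c$; the paper's last line has the same $\lVert\bm g^*_k\rVert^2$ versus $\lVert\bm g^*_k\rVert$ mismatch. This breaks even your ``immediate'' case $\bm M_{\bar j}=\bm I_n$. Take $n=1$, $C=\{x:-x\le 1\}$, $x_k=0$ and $f(x)=-ax$ with $a>1/\beta$. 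Then $\lVert W(\tilde t,k)\rVert=a$, the test $-at\le-\beta a^2t$ fails for all $t>0$, and $\boldsymbol{\xi}_i^T\bm d_k=-a<-ca$ forever.

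So your route proves only a restricted statement: termination when $\beta\lVert\bm g^*_k\rVert<c$ and $\boldsymbol{\xi}_i^T(\bm I_n-\bm M_{\bar j})\bm d_k\ge 0$ for all large $i\in\hat{\mathrm I}$. This holds in particular when the trial points $\bm x_k+t_i\bm d_k$ eventually lie where $P_{C_k}$ is the identity. The theorem as stated needs such extra hypotheses, or a modified Armijo test and stopping rule. No argument can close the step you left open.
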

	\begin{proof}
		Suppose for contradiction that Algorithm~\ref{projected-mifflin-line search} does not terminate. Then, by Lemma~\ref{L3}, $\hat{\mathrm{I}}\subseteq\mathbb{N}_0$ is an infinite set. Moreover,
		\begin{equation}\label{T3-1}
			f\big(P_{C_k}(\bm x_k + t_i \bm d_k)\big)-f(\bm x_k) > -\beta  t_i\left\lVert W(\tilde{t}, k)\right\rVert^2, \quad \forall\, i\in\hat{\mathrm{I}}.
		\end{equation}
		Recalling $\bar{t}\in\mathrm{T}_{\text{lower}}$, we have 
		\begin{equation}\label{T3-2}
			f\big(P_{C_k}(\bm x_k + \bar{t} \bm d_k)\big)-f(\bm x_k) \leq -\beta \bar t\left\lVert W(\tilde{t}, k)\right\rVert^2.
		\end{equation}
		%	Notice that, for each $i\in\hat{\mathrm{I}}$, we have $t_i=t^u_{i+1}>\bar t>0$. Thus, in view of \eqref{projection-feature2}, one may see
		%	\begin{equation}
		%	t_i\left\lVert \frac{1}{ t_i} [ \bm x_k - P_C(\bm x_k-  t_i\bm g^*_k)]\right\rVert \leq \bar t\left\lVert \frac{1}{\bar t} [ \bm x_k - P_C(\bm x_k- \bar t\bm g^*_k)]\right\rVert, \quad \forall \, i\in\mathrm{I}.
		%	\end{equation}
		%	Hence, for any $i\in\mathrm{I}$, we can continue \eqref{T3-2} with 
		%	\begin{align}\label{T3-3}
		%	f\big(P_C(\bm x_k + \bar{t} \bm d_k)\big)-f(\bm x_k)& \leq -\beta \bar t\left\lVert \frac{1}{\bar t} [ \bm x_k - P_C(\bm x_k- \bar t\bm g^*_k)]\right\rVert^2\nonumber\\& \leq-\beta  t_i \left\lVert \frac{1}{ t_i} [ \bm x_k - P_C(\bm x_k-  t_i\bm g^*_k)]\right\rVert^2\nonumber\\& \leq-\beta  \bar t \left\lVert \frac{1}{ t_i} [ \bm x_k - P_C(\bm x_k-  t_i\bm g^*_k)]\right\rVert^2.
		%	\end{align}
		By employing  \eqref{T3-1} and \eqref{T3-2}, for each $i\in\hat{\mathrm{I}}$, we arrive at the following inequality
		\begin{align*}
			f\big(P_{C_k}(\bm x_k + t_i \bm d_k)\big) - 	f\big(P_{C_k}(\bm x_k + \bar{t} \bm d_k)\big)> -\beta \left(t_i-\bar t \,\right) \left\lVert W(\tilde{t}, k)\right\rVert^2.
		\end{align*}
		Since $\bm x_k\in C$ and $P_{C}$ is a nonexpansive map,  one can write
		\begin{align*}
			\left\lVert W(\tilde{t}, k)\right\rVert^2&=\frac{1}{{\tilde t}^2} \left\lVert \bm x_k-P_C(\bm x_k- \tilde t\bm g^*_k) \right\rVert^2=\frac{1}{{\tilde t}^2} \left\lVert P_C(\bm x_k)-P_C(\bm x_k- \tilde t\bm g^*_k) \right\rVert^2\\& \leq \frac{1}{\tilde t^2} \lVert \tilde t \bm g^*_k \rVert^2 = \lVert  \bm g^*_k \rVert^2,
		\end{align*}
		and therefore
		\begin{equation*}
			f\big(P_{C_k}(\bm x_k + t_i \bm d_k)\big) - 	f\big(P_{C_k}(\bm x_k + \bar{t} \bm d_k)\big)> -\beta \left(t_i-\bar t\,\right) \rVert   \bm g^*_k\rVert^2,
		\end{equation*}
		for all $i\in\hat{\mathrm{I}}$. Let $h_i:=t_i-\bar{t}>0$, for all $i\in\hat{\mathrm{I}}$, and $\bm z_k:=\bm x_k+ \bar{t} \bm d_k$. Then, the above inequality can be represented as
		\begin{equation}\label{T3-4}
			-\beta  \rVert   \bm g^*_k\rVert^2 <	\frac{f\big(P_{C_k}(\bm z_k + h_i \bm d_k)\big) - 	f\big(P_{C_k}(\bm z_k)\big)}{h_i},\quad \forall \, i\in\hat{\mathrm{I}}.
		\end{equation}
		Moreover, $h_i\downarrow 0$ as $i\xrightarrow{\hat{\mathrm{I}}}\infty$, and
		%	As $\eta_{\max}\in\mathbb{N}$ and the index set $\mathrm{I}$ is an infinite set, the third conditional block of Algorithm~\ref{projected-mifflin-line search} ensures the existence of $\bar{i}\in\mathbb{N}$ such that
		\begin{equation}\label{T3-5}
			\boldsymbol{\xi}_i\in\partial f\big(P_{C_k}(\bm x_k+t_i\bm d_k)     \big)=\partial f\big(P_{C_k}(\bm z_k+h_i\bm d_k)     \big), \quad \forall\, i\in\hat{\mathrm{I}}.
		\end{equation}
		Since $f:\mathbb{R}^n\to\mathbb{R}$ is weakly semismooth, by Theorem~\ref{T0},  we know that $f\circ P_{C_k}$ is weakly semismooth, as well. This fact along with \eqref{T3-4} and \eqref{T3-5}  implies
		\begin{align}\label{T3-6}
			-\beta  \rVert   \bm g^*_k\rVert^2& \leq \lim_{i\xrightarrow{\hat{\mathrm{I}}}\infty}	\frac{f\big(P_{C_k}(\bm z_k + h_i \bm d_k)\big) - 	f\big(P_{C_k}(\bm z_k)\big)}{h_i} = \left(f\circ P_{C_k}\right)'(\bm z_k; \bm d_k)\nonumber\\&=\lim_{i\xrightarrow{\hat{\mathrm{I}}}\infty} \boldsymbol{\xi}_i^T \bm d_k.
		\end{align}
		On the other hand, as  Algorithm~\ref{projected-mifflin-line search} does not terminate, it must be the case that
		\begin{equation*}
			\boldsymbol{\xi}_i^T \bm d_k < -c \lVert\bm g^*_k \rVert,\quad \forall\, i\in\mathbb{N}_0,
		\end{equation*}
		which means
		\begin{equation*}
			\lim_{i\xrightarrow{\hat{\mathrm{I}}}\infty} \boldsymbol{\xi}_i^T \bm d_k \leq -c \lVert\bm g^*_k \rVert < -\beta \lVert\bm g^*_k \rVert,
		\end{equation*}
		contradicting \eqref{T3-6}.
	\end{proof}
	
	Eventually, we show that, for the subgradient $\boldsymbol{\xi}_{k+1}$ generated by Algorithm~\ref{projected-mifflin-line search}, we have  $\boldsymbol{\xi}_{k+1}\in\partial_{\varepsilon_k} f(\bm x_k)$.
	
	\begin{lemma}\label{L-b-last}
		The resulting subgradient $\boldsymbol{\xi}_{k+1}$ generated by Algorithm~\ref{projected-mifflin-line search} satisfies $$\boldsymbol{\xi}_{k+1}\in\partial_{\varepsilon_k} f(\bm x_k).$$
	\end{lemma}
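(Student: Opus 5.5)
The plan is to show that every trial point at which Algorithm~\ref{projected-mifflin-line search} evaluates a subgradient lies in the closed ball $\mathcal{B}_{\varepsilon_k}(\bm x_k)$. Then the definition of $\partial_{\varepsilon_k} f(\bm x_k)$ as the convex hull of $\partial f(\bm y)$ over $\bm y\in\mathcal{B}_{\varepsilon_k}(\bm x_k)$ gives the claim at once.

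By construction, the output is $\boldsymbol{\xi}_{k+1}=\boldsymbol{\xi}_i$ for some index $i\in\mathbb{N}_0$. Every such $\boldsymbol{\xi}_i$ (including $\boldsymbol{\xi}_0$) satisfies
\[
\boldsymbol{\xi}_i\in\partial f(\bm y_i), \qquad \bm y_i:=P_{C_k}(\bm x_k+t_i\bm d_k).
\]
So it suffices to prove $\lVert \bm y_i-\bm x_k\rVert\le\varepsilon_k$.

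Since $P_{C_k}$ maps into $C_k$, we have $\bm y_i\in C_k$. By the definition of $C_k$, this gives $\lVert \bm y_i-\bm x_k\rVert_\infty\le\alpha_k=\varepsilon_k/\sqrt{n}$. The standard norm comparison $\lVert \bm v\rVert\le\sqrt{n}\,\lVert \bm v\rVert_\infty$ on $\mathbb{R}^n$ then yields
\[
\lVert \bm y_i-\bm x_k\rVert\le\sqrt{n}\,\alpha_k=\varepsilon_k,
\]
that is, $\bm y_i\in\mathcal{B}_{\varepsilon_k}(\bm x_k)$. Consequently,
\[
\boldsymbol{\xi}_{k+1}\in\partial f(\bm y_i)\subseteq \texttt{conv}\left\{\partial f(\bm y):\bm y\in\mathcal{B}_{\varepsilon_k}(\bm x_k)\right\}=\partial_{\varepsilon_k} f(\bm x_k).
\]

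There is no real obstacle here. The only point to state explicitly is that the initial subgradient $\boldsymbol{\xi}_0$ is also computed at a projection onto $C_k$, so the argument covers termination at $i=0$ as well. The bound depends only on $P_{C_k}$ landing in the $\ell_\infty$-box of radius $\alpha_k$; it does not use the step-size bounds $t_i\in(0,1)$, nonexpansiveness of $P_{C_k}$, or the fact that $\lVert\bm d_k\rVert=1$.
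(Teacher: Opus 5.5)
Your proof is correct and follows the same route as the paper. Every trial point $P_{C_k}(\bm x_k+t_i\bm d_k)$ lies in the $\ell_\infty$-box of radius $\alpha_k=\varepsilon_k/\sqrt{n}$, the comparison $\lVert\cdot\rVert\le\sqrt{n}\,\lVert\cdot\rVert_\infty$ places it in $\mathcal{B}_{\varepsilon_k}(\bm x_k)$, and the definition of $\partial_{\varepsilon_k} f(\bm x_k)$ finishes the argument; your explicit treatment of $\boldsymbol{\xi}_0$ is a small improvement, since the paper's proof is written in terms of $t_{i+1}$ and leaves termination at $i=0$ implicit.
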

	\begin{proof}
		At the $i$-th iteration of Algorithm~\ref{projected-mifflin-line search},  by the definition of $C_k$ and $\alpha_k$, it is guaranteed that
		\begin{equation*}
			\lVert \bm x_k - P_{C_k}(\bm x_k + t_{i+1} \bm d_k)  \rVert\leq \sqrt{n} \lVert \bm x_k - P_{C_k}(\bm x_k + t_{i+1} \bm d_k)  \rVert_\infty\leq \sqrt{n}\alpha_k=\varepsilon_k.
		\end{equation*}
		Thus, if the algorithm terminates at the $\bar{i}$-th iteration, it must be the case that $\boldsymbol{\xi}_{k+1}:=\boldsymbol{\xi}_{\bar i}\in\partial_{\varepsilon_k} f(\bm x_k)$.
	\end{proof}
	
	Once Algorithm~\ref{projected-mifflin-line search} computes the nonredundant subgradient  $\boldsymbol{\xi}_{k+1}$, 	we set 
	\begin{equation*}
		\bm x_{k+1}:=\bm x_k \quad \text{and} \quad G_{k+1}:=G_k\cup \{\boldsymbol{\xi}_{k+1}  \}.
	\end{equation*} 
	In addition, we put $\varepsilon_{k+1}:=\varepsilon_k$ and $\delta_{k+1}:=\delta_k$, increment $k$ by one, and the process of Subsection~\ref{subsec1} is repeated.
	
	Termination of Algorithm~\ref{exp-limited-line-search} with $I_k=0$ is not the only situation in which we take a null step. Assume that the norm of $W(\tilde t, k)$ is not sufficiently large, i.e.,
	$$\left\lVert W(\tilde t, k)\right\rVert\leq \delta_k.$$
	In this case, motivated by the following lemma, we take a null step by setting $\bm x_{k+1}:=\bm x_k$ in order to update $\varepsilon_k$ and $\delta_k$ by 
	$$\varepsilon_{k+1}:=\sigma\varepsilon_k\quad \text{and} \quad \delta_{k+1}:=\sigma\delta_k,$$
	in which $\sigma\in(0,1)$ is a reduction factor. Next, we update $G_k$ by 
	$$G_{k+1}:=\{ \boldsymbol{\xi}_{k+1}  \},$$
	where $\boldsymbol{\xi}_{k+1}\in\partial f(\bm x_{k+1})$ is an arbitrary subgradient.
	%\{ \boldsymbol{\xi}\in G_k \,\, :\,\, \boldsymbol{\xi}\in\partial_{\varepsilon_{k+1}} f(\bm x_{k}) \},$$
	Finally, we increment $k$ by one, and repeat the process of Subsection~\ref{subsec1}.
	
	\begin{lemma}\label{L4'}
		Assume that there exists an infinite subset $\mathcal{K}\subseteq\mathbb{N}_0$ such that, for some $\bm x^*\in C$, the sequence $\{\bm x_k  \}_{k\in\mathcal{K}}$ converges to $\bm x^*$.  Suppose, in addition, 
		$$\max\left\{ \left\lVert W(\tilde t, k)\right\rVert, \varepsilon_k   \right\}\to 0, \quad \text{\rm as} \quad k\xrightarrow{\mathcal{K}}\infty.$$
		Then, $\bm x^*$ is stationary for problem~\eqref{Main_Problem-1}.
	\end{lemma}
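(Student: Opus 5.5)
Let $\tilde t\in(0,1]$ be the fixed parameter of \eqref{t-tilde}. The plan is to pass to the limit in the projection characterization of Theorem~\ref{T2}. For each $k\in\mathcal{K}$, the bundle satisfies $G_k\subseteq\partial_{\varepsilon_k} f(\bm x_k)$, and $\partial_{\varepsilon_k} f(\bm x_k)$ is convex, so $\bm g^*_k\in\partial_{\varepsilon_k} f(\bm x_k)$. Since $\varepsilon_k<1$ and $\bm x_k\to\bm x^*$ along $\mathcal{K}$, all the sets $\partial_{\varepsilon_k} f(\bm x_k)$ lie in $\partial_{1} f$ evaluated on a fixed compact neighborhood of $\bm x^*$. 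Local boundedness of $\partial_1 f$ then makes $\{\bm g^*_k\}_{k\in\mathcal{K}}$ bounded. Passing to a further infinite subset $\mathcal{K}'\subseteq\mathcal{K}$, I will assume $\bm g^*_k\to\boldsymbol{\xi}^*$ as $k\xrightarrow{\mathcal{K}'}\infty$.

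The first key step is to show $\boldsymbol{\xi}^*\in\partial f(\bm x^*)$. By Carath\'eodory's theorem, write $\bm g^*_k=\sum_{l=1}^{n+1}\lambda_{k,l}\bm v_{k,l}$ with $\bm v_{k,l}\in\partial f(\bm y_{k,l})$, $\lVert\bm y_{k,l}-\bm x_k\rVert\le\varepsilon_k$, and $\lambda_{k,l}\ge 0$ summing to one. Then $\bm y_{k,l}\to\bm x^*$ because $\varepsilon_k\to0$. Local boundedness lets us extract convergent subsequences $\lambda_{k,l}\to\lambda_l$ and $\bm v_{k,l}\to\bm v_l$. Upper semicontinuity (closedness of the graph) of $\partial f$ gives $\bm v_l\in\partial f(\bm x^*)$. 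Convexity of $\partial f(\bm x^*)$ then gives $\boldsymbol{\xi}^*=\sum_l\lambda_l\bm v_l\in\partial f(\bm x^*)$.

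The second step is to pass to the limit in $W$. By definition,
$$\tilde t\,W(\tilde t,k)=\bm x_k-P_C(\bm x_k-\tilde t\bm g^*_k).$$
Here $\tilde t>0$ is a constant independent of $k$, and $\lVert W(\tilde t,k)\rVert\to0$, so the left-hand side tends to $\bm 0$. $P_C$ is continuous (indeed nonexpansive), so the right-hand side tends to $\bm x^*-P_C(\bm x^*-\tilde t\boldsymbol{\xi}^*)$. Hence $\bm x^*=P_C(\bm x^*-\tilde t\boldsymbol{\xi}^*)$. Since $\bm x^*\in C$ because $C$ is closed, Theorem~\ref{T2} with $t=\tilde t$ yields stationarity.

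The main obstacle is the first step, namely the closedness of the $\varepsilon$-subdifferential limit as $\varepsilon_k\to0$; this is where the Carath\'eodory argument is needed. If the paper prefers, it can instead cite the standard fact that $\limsup_{\varepsilon_k\downarrow0,\ \bm x_k\to\bm x^*}\partial_{\varepsilon_k}f(\bm x_k)\subseteq\partial f(\bm x^*)$. Everything else is continuity, because $\tilde t$ is fixed and does not depend on $k$.
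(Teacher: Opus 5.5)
Your proposal is correct and follows essentially the same route as the paper's proof: you extract a subsequence of $\bm g^*_k$ converging to some $\boldsymbol{\xi}^*\in\partial f(\bm x^*)$, pass to the limit in $\bm x_k-P_C(\bm x_k-\tilde t\bm g^*_k)\to\bm 0$ using continuity of $P_C$ and the fixed $\tilde t>0$, and then invoke Theorem~\ref{T2}. Your Carath\'eodory argument makes explicit the step the paper attributes to local boundedness and upper semicontinuity of the $\varepsilon$-subdifferential map, and that extra detail is worthwhile because $\varepsilon_k$ varies with $k$.
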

	\begin{proof}
		By assumption, we have $ \bm x_k-P_C(\bm x_k-\tilde{t}\bm g^*_k)\to \bm 0$ and $\varepsilon_k\to 0$, as $k\xrightarrow{\mathcal{K}}\infty$. Moreover,
		$\bm g^*_k\in\texttt{conv}\{G_k  \}\subseteq\partial_{\varepsilon_k} f(\bm x_k)$,  for all  $k\in\mathcal{K}.$
		As $\bm x_k\xrightarrow{k\in\mathcal{K}}\bm x^*$,  the local boundedness of the Clarke $\varepsilon$-subdifferential map together with its upper semicontinuity  implies the existence of $\mathcal{K}'\subseteq\mathcal{K}$ and $\boldsymbol{\xi}^*\in\partial f(\bm x^*)$ such that $\bm g^*_k\to\boldsymbol{\xi}^*$, as $k\xrightarrow{k\in\mathcal{K}'}\infty$.  Thus,  
		$$\bm x^*=P_C(\bm x^* - \tilde t \boldsymbol{\xi}^*).$$
		Since $\tilde{t}>0$ and $\boldsymbol{\xi}^*\in\partial f(\bm x^*)$, it follows from Theorem~\ref{T2}  that $\bm x^*$ is stationary for problem~\eqref{Main_Problem-1}.
	\end{proof}
	%Inspired by the above lemma, once $\lVert\frac{1}{\tilde{t}}[\bm x_k-P_C(\bm x_k-\tilde{t}\bm g^*_k)]\rVert\leq \delta_k$, we measure the stationarity of the current point $\bm x_{k+1}$ by
	%\begin{equation}
	%v_{k+1}:=\max\left\{ \left\lVert\frac{1}{\tilde{t}}[\bm x_k-P_C(\bm x_k-\tilde{t}\bm g^*_k)]\right\rVert, \varepsilon_k   \right\}.
	%\end{equation}

	\subsection{ Main Algorithm}
	
	Based on the provided details in the previous parts, the proposed method is presented in Algorithm~\ref{main-alg}. In this algorithm, inspired by Lemma~\ref{L4'}, the variable $v_k$ serves as an optimality certificate for the method's overall performance up to the $k$-th iteration. It is initialized with $v_0:=\infty$ and is updated according to
	\begin{equation*}
		v_{k+1}:=\max\left\{ \left\lVert W(\tilde t, k)\right\rVert, \varepsilon_k   \right\},
	\end{equation*}
	once a significant reduction in  $ \left\lVert W(\tilde t, k)\right\rVert$ is observed.

	\begin{algorithm}
		\caption{Main Algorithm}
		\label{main-alg}
		\hspace*{\algorithmicindent}\textbf{Inputs:} Starting point $\bm x_0\in C$, $(\varepsilon_0, \delta_0)\in(0,1)\times(0, 1)$, and stationarity tolerance $\tau>0$.\\
		\hspace*{\algorithmicindent}\textbf{Parameters:}   Reduction factor $\sigma\in(0,1)$, and $\tilde t$ as set in Algorithm~\ref{exp-limited-line-search}. \\
		\hspace*{\algorithmicindent}\textbf{Output:} An approximate stationary point. 
		%\hspace*{\algorithmicindent}\textbf{Requirements:} nothing \\
		
		%\hspace*{\algorithmicindent}\textbf{Function:} $\{\bm s, {\rm \bm I}\}$\,\,=\,\,\texttt{T-PLS}\,($\varepsilon$, $\mathbf x$, ${\bm d}$)
		
		\begin{algorithmic}[1]
			\STATE{\textbf{Initialization:}  Compute arbitrary subgradient $\boldsymbol{\xi}_0\in \partial f(\bm x_0)$, and set $G_0:=\{ \boldsymbol{\xi}_0  \}$, $v_0:=+\infty$, and $k:=0$ ;}
			\WHILE{$v_k> \tau$}
			
			\STATE{ Compute $\bm g^*_k$ by solving subproblem \eqref{least-norm} ;  }
			
			\IF{$\lVert W(\tilde t, k) \rVert\leq \delta_k$  }
			\STATE{Set $v_{k+1}:=\max\{ \lVert W(\tilde t, k) \rVert, \varepsilon_k  \}$ ;   }
			\STATE{Set $\varepsilon_{k+1}:=\sigma\varepsilon_k, \delta_{k+1}:=\sigma\delta_k,$ and $\bm x_{k+1}:=\bm x_k$ ;}
			\STATE{For an arbitrary subgradient $\boldsymbol{\xi}_{k+1}\in\partial f(\bm x_{k+1})$, set $G_{k+1}:=\{  \boldsymbol{\xi}_{k+1} \}$ ; }
			\STATE{Set $k\leftarrow k+1$, and \textbf{Continue} ;}
			\ENDIF
			
			\STATE{Compute the normalized search direction  $\bm d_k$ by $\bm d_k:=-\bm g^*_k/\lVert\bm g^*_k\rVert$ ;}
			\STATE{Compute $t_k$ and $I_k$ by using  Algorithm~\ref{exp-limited-line-search} ;   }

			\IF{$I_k=1$}
			
			\STATE{Set $\bm x_{k+1}:=P_C(\bm x_k+ t_k \bm d_k)$, and compute $\boldsymbol{\xi}_{k+1}\in\partial f(\bm x_{k+1})$ ;  }
			\STATE{Set $G_{k+1}:=\{ \boldsymbol{\xi}_{k+1} \}$ ;}
			\STATE{Set $\varepsilon_{k+1}:=\varepsilon_k$ and $\delta_{k+1}:=\delta_k$ ;}
			%\STATE{Set $v_{k+1}:=\max\{\lVert \bm x_{k+1}-\bm x_k\rVert, \varepsilon_k   \}$ ;}
			
			\ELSIF{$I_k=2$}
			
			\STATE{Set $\bm x_{k+1}:=P_{C_k}(\bm x_k+ t_k \bm d_k)$, and compute $\boldsymbol{\xi}_{k+1}\in\partial f(\bm x_{k+1})$ ;  }
			\STATE{Set $G_{k+1}:=\{ \boldsymbol{\xi}_{k+1} \}$ ;}
			\STATE{Set $\varepsilon_{k+1}:=\varepsilon_k$ and $\delta_{k+1}:=\delta_k$ ;}
			%		      \IF{ $\lVert \bm x_{k+1} -\bm x_k\rVert\leq \delta_k$ }
			%		      \STATE{Set $\varepsilon_{k+1}:=\sigma\varepsilon_k$ and $\delta_{k+1}:=\sigma\delta_k$ ; }
			%		      \ENDIF
			\ELSE
			\STATE{ Compute $\boldsymbol{\xi}_{k+1}\in\partial_{\varepsilon_k}f(\bm x_k)$ by using Algorithm~\ref{projected-mifflin-line search} ;} \STATE{ Set $G_{k+1}:=G_k\cup \{\boldsymbol{\xi}_{k+1}\}$ ; }
			\STATE{Set $\varepsilon_{k+1}:=\varepsilon_k, \delta_{k+1}:=\delta_k,$ and $\bm x_{k+1}:=\bm x_k$ ; }

			\ENDIF
			
			\STATE{ Set $v_{k+1}:=v_k$ ;}
			\STATE{Set $k\leftarrow k+1$ ;}

			\ENDWHILE
			
		\end{algorithmic}
		%\hspace*{\algorithmicindent}\textbf{ End Function}
	\end{algorithm}
	
	\section{Convergence Analysis}\label{Conv-Analysis}
	In order to observe the asymptotic behavior of Algorithm~\ref{main-alg}, throughout this section, it is assumed that $\tau=0$, which allows the algorithm to generate the infinite sequence $\{\bm x_k\}_{k\in\mathbb{N}_0}$. Next, one of the following cases may occur:\\
	\textbf{Case (i):} The number of serious steps is infinite.\\
	\textbf{Case (ii):} The number of serious steps is finite.

	First, we consider case (i), where the index set 
	\begin{equation*}
		K:=\{k\in\mathbb{N}_0 \,\, : \,\, I_k=1\, \text{or} \, I_k=2 \,\, \text{in Algorithm~\ref{main-alg}} \},
	\end{equation*}
	is infinite. Throughout the study of this case, we need the following assumption.
	\begin{assumption}\label{assump1}
		The sublevel set 
		$$lev_{f(\bm x_0)}( f):=\{\bm x\in C \,\,:\,\, f(\bm x)\leq f(\bm x_0)   \},$$
		is a bounded set.
	\end{assumption}
	
	We first need the following technical lemma.
	\begin{lemma}\label{L4}
		Suppose Assumption~\ref{assump1} holds, and the index set $K$ is infinite. Then, $\delta_k\downarrow 0$ and $\varepsilon_k\downarrow 0 $,  as $k\to\infty$. 
	\end{lemma}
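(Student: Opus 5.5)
Since $\delta_k$ and $\varepsilon_k$ are nonincreasing and each reduction multiplies both by the same $\sigma\in(0,1)$, it suffices to show that the reduction branch (the test $\lVert W(\tilde t,k)\rVert\leq\delta_k$ in Algorithm~\ref{main-alg}) is executed infinitely often. Indeed, $\delta_k=\sigma^{m_k}\delta_0$ and $\varepsilon_k=\sigma^{m_k}\varepsilon_0$, where $m_k$ counts the reductions performed before iteration $k$. If $m_k\to\infty$, both sequences tend to zero monotonically. So I argue by contradiction: assume only finitely many reductions occur. Then there are $\bar k$, $\bar\delta>0$ and $\bar\varepsilon>0$ with $\delta_k=\bar\delta$, $\varepsilon_k=\bar\varepsilon$ and $\lVert W(\tilde t,k)\rVert>\bar\delta$ for all $k\geq\bar k$.

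Next I use the sufficient decrease condition at serious steps. For every $k\in K$ with $k\geq\bar k$, Algorithm~\ref{exp-limited-line-search} returns $t_k\geq\tilde t$ by \eqref{t-tilde}, and the accepted test gives
$$f(\bm x_{k+1})-f(\bm x_k)\leq-\beta t_k\lVert W(\tilde t,k)\rVert^2\leq-\beta\tilde t\,\bar\delta^2 .$$
This holds whether $\bm x_{k+1}=P_C(\bm x_k+t_k\bm d_k)$ or $\bm x_{k+1}=P_{C_k}(\bm x_k+t_k\bm d_k)$, since in each case the test was applied to exactly the point that is taken. At null steps $\bm x_{k+1}=\bm x_k$. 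Hence $\{f(\bm x_k)\}$ is nonincreasing, and all iterates lie in $C$: $P_C$ maps into $C$ and $C_k\subseteq C$. Therefore every iterate belongs to $lev_{f(\bm x_0)}(f)$.

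Finally, I bound $f$ from below. The set $lev_{f(\bm x_0)}(f)$ is bounded by Assumption~\ref{assump1} and closed because $f$ is continuous and $C$ is closed, so it is compact. Thus $f$ is bounded below on it by some $f_{\min}$. Since $K$ is infinite, summing the decreases over $k\in K$, $k\geq\bar k$, gives
$$f(\bm x_k)\leq f(\bm x_{\bar k})-|\{j\in K:\bar k\leq j<k\}|\,\beta\tilde t\,\bar\delta^2\to-\infty,$$
which contradicts $f(\bm x_k)\geq f_{\min}$. Hence reductions occur infinitely often, and $\delta_k\downarrow 0$, $\varepsilon_k\downarrow 0$.

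The main point to check is the uniform lower bound on the decrease per serious step. It needs both $t_k\geq\tilde t$, which depends on the fixed grid in Algorithm~\ref{exp-limited-line-search}, and $\lVert W(\tilde t,k)\rVert>\bar\delta$ once $\delta_k$ has frozen. The monotonicity of the two sequences is immediate, because they change only through multiplication by $\sigma$.
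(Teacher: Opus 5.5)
Your proof is correct and follows essentially the same route as the paper. You freeze $\delta_k$ by contradiction, use $t_k\geq\tilde t$ together with $\lVert W(\tilde t,k)\rVert>\bar\delta$ to get a uniform decrease $\beta\tilde t\,\bar\delta^2$ at each of the infinitely many serious steps, and contradict the boundedness below of $f$. Your version is slightly more careful than the paper's on two points: you sum the decreases only from $\bar k$ onward, and you obtain the lower bound from compactness of $lev_{f(\bm x_0)}(f)$ rather than asserting $\inf\{f(\bm x):\bm x\in C\}\in\mathbb{R}$ directly.
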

	\begin{proof}
		Since $\delta_k$ and $\varepsilon_k$ are simultaneously updated in Algorithm~\ref{main-alg}, it is sufficient to show that $\delta_k\downarrow 0$ as $k\to\infty$. By indirect proof, assume the existence of $\bar{k}\in\mathbb{N}_0$ such that 
		$\delta_k=\delta_{\bar{k}}>0$, for all $k\geq \bar{k}$. Thus, construction of Algorithm~\ref{main-alg} implies
		\begin{equation*}
			\lVert W(\tilde t, k) \rVert > \delta_{\bar{k}}, \quad \forall \, k \geq \bar{k}.
		\end{equation*} 
		For any $k\in K$, as a serious step takes place, we have 
		\begin{align}\label{L4-1}
			f(\bm x_{k+1})-f(\bm x_k)&\leq -\beta t_k \left\lVert W(\tilde t, k) \right\rVert^2 \leq -\beta\, \tilde t\, \delta_{\bar k}^2.
		\end{align}
		Moreover, for each $k\in\mathbb{N}_0\setminus K$, we have $f(\bm x_{k+1})=f(\bm x_k)$. This fact along with $\eqref{L4-1}$ yields
		\begin{equation*}
			f(\bm x_{k+1})\leq f(\bm x_0)-\sum_{\substack{j\in K\\ j\leq k+1}} \beta\, \tilde{t}\, \delta_{\bar k}^2.
		\end{equation*} 	
		As $K$ is infinite, letting $k$ approach infinity in the above inequality, we obtain $f(\bm x_k)\to -\infty$, as $k\to\infty$. On the other hand, by Assumption~\ref{assump1}, we have 
		\begin{equation*}
			\inf \left\{f(\bm x) \,\,: \,\, \bm x \in C   \right\} \in \mathbb{R},
		\end{equation*}
		which is a contradiction.
	\end{proof}
	The main results for case (i) is stated in the next theorem.
	
	\begin{theorem}\label{T4}
		Suppose Assumption~\ref{assump1} holds, and the index set $K$ is infinite.
		\begin{itemize}
			\item[(i)] Let $K^*:=\{k\in\mathbb{N}_0\,:\, \lVert W(\tilde{t},k) \rVert\leq\delta_k  \}$. Then, any limit point of the sequence $\{\bm x_k \}_{k\in K^*}$ is stationary for problem \eqref{Main_Problem-1}.
			\item[(ii)] Any limit point of the sequence  $\{\bm x_k\}_{k\in K}$ is stationary for problem \eqref{Main_Problem-1}.
			\item[(iii)] 
			$f(\bm x_k)\downarrow f(\bm x^*) $ as $k\to\infty$, where $\bm x^*\in C$ is a stationary point for problem~\eqref{Main_Problem-1}.
		\end{itemize}
		
		%any limit point of the sequence $\{\bm x_k  \}_{k\in\mathbb{N}_0}$ generated by Algorithm~\ref{main-alg} is stationary for problem~\eqref{Main_Problem-1}.
	\end{theorem}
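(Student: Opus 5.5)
The plan is to first record the facts that all three parts use, and then deduce each part from Lemma~\ref{L4'}. Every iterate lies in $C$, because serious steps are projections onto $C$ or $C_k\subseteq C$ and null steps keep $\bm x_{k+1}=\bm x_k$. Moreover $f(\bm x_{k+1})\le f(\bm x_k)$ for all $k$, since serious steps satisfy the Armijo-type condition of Algorithm~\ref{exp-limited-line-search} and null steps leave $f$ unchanged. Hence $\{\bm x_k\}\subseteq lev_{f(\bm x_0)}(f)$, which is bounded by Assumption~\ref{assump1} and closed since $f$ is continuous and $C$ is closed. So every subsequence has limit points, and they all lie in $C$. Also $f$ is bounded below on this compact set, so $f(\bm x_k)\downarrow f^*\in\mathbb{R}$. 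Finally, Lemma~\ref{L4} gives $\varepsilon_k\downarrow 0$ and $\delta_k\downarrow 0$.

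For part (i), I first note that $K^*$ is infinite. Indeed, $\delta_k$ is reduced only at indices $k\in K^*$, and $\delta_k\downarrow 0$, so such reductions must happen infinitely often. Now let $\bm x^*$ be a limit point of $\{\bm x_k\}_{k\in K^*}$, say $\bm x_k\to\bm x^*$ along $\mathcal{K}\subseteq K^*$. For $k\in\mathcal K$ we have $\lVert W(\tilde t,k)\rVert\le\delta_k\to 0$ and $\varepsilon_k\to 0$. Therefore $\max\{\lVert W(\tilde t,k)\rVert,\varepsilon_k\}\to 0$ along $\mathcal K$, and Lemma~\ref{L4'} gives stationarity of $\bm x^*$.

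For part (ii), I would show directly that $\lVert W(\tilde t,k)\rVert\to 0$ along $K$, using a telescoping sum. For $k\in K$ the step size satisfies $t_k\ge\tilde t$ by \eqref{t-tilde}, so
\[
f(\bm x_{k+1})-f(\bm x_k)\le-\beta\tilde t\lVert W(\tilde t,k)\rVert^2 .
\]
For $k\notin K$ we have $f(\bm x_{k+1})=f(\bm x_k)$. Summing over all $k$ gives
\[
\beta\tilde t\sum_{k\in K}\lVert W(\tilde t,k)\rVert^2\le f(\bm x_0)-f^*<\infty ,
\]
so $\lVert W(\tilde t,k)\rVert\to 0$ as $k\xrightarrow{K}\infty$. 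Combining this with $\varepsilon_k\to 0$, any subsequence of $\{\bm x_k\}_{k\in K}$ converging to some $\bm x^*$ satisfies the hypotheses of Lemma~\ref{L4'}. Hence $\bm x^*$ is stationary.

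For part (iii), boundedness gives a limit point $\bm x^*$ of $\{\bm x_k\}_{k\in K}$, which is stationary by part (ii). Continuity of $f$ gives $f(\bm x_k)\to f(\bm x^*)$ along that subsequence. Since the whole sequence $f(\bm x_k)$ is monotone nonincreasing, it converges, and its limit must be $f(\bm x^*)$; this gives $f(\bm x_k)\downarrow f(\bm x^*)$.

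I expect no serious obstacle. The only delicate points are checking that $K^*$ is infinite in part (i), and, in part (ii), remembering that the sufficient decrease uses $t_k\ge\tilde t$. That uniform lower bound on the step is what makes the sum over $K$ finite.
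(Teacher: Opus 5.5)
Your proof is correct and follows the paper's argument. It uses Lemma~\ref{L4} to get $\varepsilon_k,\delta_k\downarrow 0$, boundedness of the iterates from the sublevel set, and Lemma~\ref{L4'} along suitable subsequences, and it reads (iii) off from (ii). The differences are cosmetic: you justify explicitly why $K^*$ is infinite, and you use a telescoping sum to get $\sum_{k\in K}\lVert W(\tilde t,k)\rVert^2<\infty$, whereas the paper deduces $\lVert W(\tilde t,k)\rVert\to 0$ along the subsequence from $f(\bm x_k)-f(\bm x_{k+1})\to 0$ and $t_k\ge\tilde t$.
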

	
	\begin{proof}
		(i) It follows from  Lemma~\ref{L4} that $\delta_k\downarrow 0$ and $\varepsilon_k\downarrow 0$,   as $k\to\infty$. Consequently,  construction of Algorithm~\ref{main-alg} implies that $K^*$ is an infinite set. By the descent nature of Algorithm~\ref{main-alg}, we have $\{\bm x_k \}_{k\in K^*}\subseteq lev_{f(\bm x_0)}(f)$, and thus, Assumption~\ref{assump1} ensures that the sequence 	$\{\bm x_k \}_{k\in K^*}$ is bounded.  Let $\mathcal{K}\subseteq K^*$ be such that $\bm x_k\xrightarrow{k\in\mathcal{K}}\bm x^*$. As $C$ is closed and $\{\bm x_k\}_{k\in\mathcal{K}}\subseteq C$, we deduce $\bm x^*\in C$. Moreover, for any $k\in\mathcal{K}\subseteq K^*$, we have
		\begin{equation*}
			\lVert W(\tilde{t}, k) \rVert\leq \delta_k,
		\end{equation*}
		which means
		\begin{equation*}
			\max \{ \lVert W(\tilde{t}, k) \rVert, \varepsilon_k  \} \to 0 \quad \text{as} \quad k\xrightarrow{k\in\mathcal{K}}\infty.
		\end{equation*}
		Now, it follows from Lemma~\ref{L4'} that $\bm x^*\in C$ is a stationary point for problem~\eqref{Main_Problem-1}. 
		
		(ii) Lemma~\ref{L4} implies  $\delta_k\downarrow 0$ and $\varepsilon_k\downarrow 0$,   as $k\to\infty$. For any $k\in K$, since a serious step occurs, we have
		\begin{equation}\label{Th1-1}
			0\leq \beta t_k \lVert W(\tilde{t}, k) \rVert^2 \leq f(\bm x_k)-f(\bm x_{k+1}).
		\end{equation}
		The descent nature of Algorithm~\ref{main-alg} ensures $\{\bm x_k \}_{k\in K}\subseteq lev_{f(\bm x_0)}(f)$. Hence, let $\mathcal{K}\subseteq K$ be such that $\bm x_k\xrightarrow{k\in\mathcal{K}}\bm x^*$. Clearly, $\bm x^*\in C$. Continuity of $f$ yields $f(\bm x_k)\xrightarrow{k\in\mathcal{K}} f(\bm x^*)$. This fact along with the monotonicity of the sequence $\{f(\bm x_k) \}_{k\in\mathbb{N}_0}$ gives 
		\begin{equation*}
			f(\bm x_k)\downarrow f(\bm x^*) \quad \text{as} \quad k\to\infty.
		\end{equation*}
		Therefore, by taking $t_k\geq \tilde{t}>0$ for all $k\in K$, and $\beta>0$ into account, we conclude from \eqref{Th1-1} that
		\begin{equation*}
			\lVert W(\tilde{t}, k) \rVert\to 0 \quad \text{as} \quad k\xrightarrow{k\in\mathcal{K}}\infty,
		\end{equation*}
		yielding
		\begin{equation*}
			\max \{ \lVert W(\tilde{t}, k) \rVert, \varepsilon_k  \} \to 0 \quad \text{as} \quad k\xrightarrow{k\in\mathcal{K}}\infty.
		\end{equation*}
		Consequently, we deduce from Lemma~\ref{L4'} that  $\bm x^*\in C$ is a stationary point for problem~\eqref{Main_Problem-1}. 
		
		(iii) This follows immediately from the proof of part(ii) of this theorem.
		% Let $\mathcal{K}\subseteq K^*$ be as defined in the proof of part(i) of this theorem. Then,  $\bm x_k\xrightarrow{k\in\mathcal{K}}\bm x^*$ with  $\bm x^*\in C$ as a stationary point for problem~\eqref{Main_Problem-1}. Continuity of $f$ implies $f(\bm x_k)\xrightarrow{k\in\mathcal{K}} f(\bm x^*)$, and it follows from the monotonicity of the sequence $\{f(\bm x_k) \}_{k\in\mathbb{N}_0}$ that 
		%\begin{equation*}
		%f(\bm x_k)\downarrow f(\bm x^*) \quad \text{as} \quad k\to\infty.
		%\end{equation*}
	\end{proof}
	
	\begin{remark}
		Theorem~\ref{T4} identifies two subsequences of the sequence of iterates $\{\bm x_k\}_{k\in\mathbb{N}_0}$ generated by Algorithm~\ref{main-alg}, each of which has the property that any cluster point is stationary for the main problem. These results seem to be the strongest stationarity guarantees that can reasonably be obtained under the present assumptions. In the unconstrained setting, several conditions and frameworks have been proposed in~\cite{Improved-convergence} to ensure that every cluster point of the generated sequence of iterates is stationary. Despite our efforts, extending such conditions and frameworks to the present projection-based setting appears to be difficult, leaving us with the open question of whether every cluster point of the sequence of iterates is stationary.
	\end{remark}

	Next, we consider case (ii), where the number of serious steps is finite.
	
	\begin{lemma}\label{T5}
		Suppose that the index set $K$ is finite. Then
		\begin{equation*}
			\liminf_{k\to\infty} \lVert W(\tilde{t}, k) \rVert=0.
		\end{equation*}
	\end{lemma}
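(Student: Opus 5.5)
The plan is a case split on the behaviour of the test $\lVert W(\tilde t,k)\rVert\leq\delta_k$ after the last serious step. Since $K$ is finite, there is $k_0\in\mathbb{N}_0$ such that no serious step happens for $k\geq k_0$. Then $\bm x_k=\bar{\bm x}:=\bm x_{k_0}$ for all $k\geq k_0$, and every iteration $k\geq k_0$ is a null step of one of the two kinds in Algorithm~\ref{main-alg}.

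\textbf{Case A: the test $\lVert W(\tilde t,k)\rVert\leq\delta_k$ holds for infinitely many $k$.} Each such $k$ multiplies $\delta_k$ by $\sigma$. The other null steps leave $\delta_k$ unchanged. Hence $\delta_k\downarrow 0$, and along these indices $\lVert W(\tilde t,k)\rVert\leq\delta_k\to 0$. This gives $\liminf_{k\to\infty}\lVert W(\tilde t,k)\rVert=0$ directly.

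\textbf{Case B: the test fails for all $k\geq k_1\geq k_0$.} We argue by contradiction. For $k\geq k_1$ we have $\varepsilon_k\equiv\bar\varepsilon$, $\delta_k\equiv\bar\delta$, and $\lVert W(\tilde t,k)\rVert>\bar\delta$. Every iteration calls Algorithm~\ref{exp-limited-line-search}, which returns $I_k=0$, and then Algorithm~\ref{projected-mifflin-line search}, which terminates finitely by the preceding theorem. So the bundle grows as $G_{k+1}=G_k\cup\{\boldsymbol{\xi}_{k+1}\}$. The nonexpansiveness estimate from the proof of that theorem gives $\lVert \bm g^*_k\rVert\geq\lVert W(\tilde t,k)\rVert>\bar\delta$ for all $k\geq k_1$. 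By Lemma~\ref{L-b-last}, every element of every $G_k$ lies in $\partial_{\bar\varepsilon}f(\bar{\bm x})$. This set is compact, so there is $L>0$ with $\lVert\boldsymbol{\xi}\rVert\leq L$ for all bundle elements, and hence $\lVert \bm g^*_k\rVert\leq L$ as well.

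The key step is the standard bundle-descent estimate for the least-norm element. Since $\bm g^*_k$ and $\boldsymbol{\xi}_{k+1}$ both lie in $\texttt{conv}\{G_{k+1}\}$, for every $\lambda\in[0,1]$
\begin{equation*}
\lVert \bm g^*_{k+1}\rVert^2\leq \lVert (1-\lambda)\bm g^*_k+\lambda\boldsymbol{\xi}_{k+1}\rVert^2
=\lVert \bm g^*_k\rVert^2+2\lambda\,{\bm g^*_k}^T(\boldsymbol{\xi}_{k+1}-\bm g^*_k)+\lambda^2\lVert\boldsymbol{\xi}_{k+1}-\bm g^*_k\rVert^2 .
\end{equation*}
The stopping test of Algorithm~\ref{projected-mifflin-line search}, $\boldsymbol{\xi}_{k+1}^T\bm d_k\geq -c\lVert\bm g^*_k\rVert$, is equivalent to ${\bm g^*_k}^T\boldsymbol{\xi}_{k+1}\leq c\lVert \bm g^*_k\rVert^2$. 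Therefore the linear coefficient satisfies ${\bm g^*_k}^T(\boldsymbol{\xi}_{k+1}-\bm g^*_k)\leq-(1-c)\lVert\bm g^*_k\rVert^2$, and the quadratic coefficient is at most $4L^2$. Taking $\lambda:=(1-c)\lVert \bm g^*_k\rVert^2/(4L^2)$, which lies in $[0,1]$ because $\lVert\bm g^*_k\rVert\leq L$ and $c\in(0,1)$, yields
\begin{equation*}
\lVert \bm g^*_{k+1}\rVert^2\leq \lVert \bm g^*_k\rVert^2-\frac{(1-c)^2\lVert \bm g^*_k\rVert^4}{4L^2}\leq \lVert \bm g^*_k\rVert^2-\frac{(1-c)^2\bar\delta^4}{4L^2}.
\end{equation*}
Summing this over $k\geq k_1$ forces $\lVert \bm g^*_k\rVert^2\to-\infty$, which is impossible. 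So Case B cannot occur, and Case A gives the claim.

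The main obstacle is making sure the uniform bound $L$ is legitimate and that the bundle is really never reset during Case B. Both hinge on the fact that $\bm x_k$ and $\varepsilon_k$ are frozen after $k_1$, so all subgradients come from the single compact set $\partial_{\bar\varepsilon}f(\bar{\bm x})$ (Lemma~\ref{L-b-last}). The only bundle resets in Algorithm~\ref{main-alg} occur at serious steps or at $\varepsilon$-reductions, and neither happens after $k_1$.
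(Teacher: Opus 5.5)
Your proof is correct and follows essentially the same route as the paper. The paper argues by contradiction from $\liminf_{k\to\infty}\lVert W(\tilde t,k)\rVert>0$, implicitly using your Case A reasoning to conclude that the bundle eventually only grows. It then applies the same least-norm estimate with step $(1-c)\lVert\bm g^*_k\rVert^2/(4C_2^2)$, turning it into a geometric contraction of $\lVert\bm g^*_k\rVert^2$ by a fixed factor in $(0,1)$, whereas you use the lower bound $\bar\delta$ to get a constant additive decrease; both give the contradiction.
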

	\begin{proof}
		Since $K$ is finite, there exits $\tilde{k}\in\mathbb{N}_0$ such that  
		\begin{equation}
			\bm x_k = \bm x_{\tilde k}, \quad \forall \, k\geq \tilde{k}.
		\end{equation}
		The local boundedness of the Clarke $\varepsilon$-subdifferential map along with the fact that $\bm g^*_k\in\texttt{conv} \{G_k\}\subseteq\partial_{\varepsilon_k} f(\bm x_k)$, for all $k\in\mathbb{N}_0$, ensures the sequence $\{\lVert W(\tilde{t},k)\rVert  \}_{k\in\mathbb{N}_0}$ is bounded. Let
		\begin{equation}
			w:=\liminf_{k\to\infty} \lVert W(\tilde{t}, k) \rVert.
		\end{equation}
		For contradiction purposes, assume $w>0$. Then, by construction of Algorithm~\ref{main-alg}, there exists $k'\in\mathbb{N}_0$ such that 
		\begin{equation}
			G_{k+1}=G_k\cup \{\boldsymbol{\xi}_{k+1} \}, \quad \forall \, k\geq k'.
		\end{equation}
		Furthermore, $w>0$ implies $\liminf_{k\to\infty} \lVert \bm g^*_k \rVert >0$. In other words,  there exist $\bar{w}>0$ and $k''\in\mathbb{N}_0$ such that
		\begin{equation}\label{T5-liminf-cons}
			\lVert \bm g^*_k \rVert \geq \bar{w}, \quad \forall \, k\geq k''.
		\end{equation}
		Let $\bar{k}:=\max\{\tilde{k}, k', k''  \}$. By construction of Algorithm~\ref{projected-mifflin-line search}, we have $$\boldsymbol{\xi}_{k+1}^T \bm d_k\geq -c \lVert \bm g^*_k \rVert,\quad \forall k\geq \bar{k}.$$ 
		Taking $\bm d_k:=-\bm g^*_k/\lVert\bm g^*_k \rVert$ into account, the above inequality  is equivalent to
		\begin{equation}\label{T5-v-inequality}
			\boldsymbol{\xi}_{k+1}^T \bm g^*_k\leq  c \lVert \bm g^*_k \rVert^2, \quad \forall k\geq \bar{k}.
		\end{equation}
		Moreover, by Lemma~\ref{L-b-last},
		\begin{equation*}
			\boldsymbol{\xi}_{k+1}\in\partial_{\varepsilon_k}f(\bm x_k)= \partial_{\varepsilon_{\bar k}}f(\bm x_{\bar k}), \quad \forall \, k\geq \bar{k}.
		\end{equation*}
		Notice that the above equality follows from the fact that $\bm x_k=\bm x_{\bar k}$ and $\varepsilon_k=\varepsilon_{\bar k}$, for all $k\geq \bar k$. Define $C_1:=\max\{\lVert \boldsymbol{\xi} \rVert \,:\, \boldsymbol{\xi}\in\partial_{\varepsilon_{\bar k}} f(\bm x_{\bar{k}})   \}$, and $C_2:=\max\{C_1, \bar{w} \}$. Then, in view of $\bm g^*_k, \boldsymbol{\xi}_{k+1}\in \texttt{conv}\{G_{k+1}\}\subseteq \partial_{\varepsilon_{\bar k}}f(\bm x_{\bar k})$, for all $k\geq \bar{k}$, we conclude
		\begin{equation}\label{T5-up-bound}
			\lVert \boldsymbol{\xi}_{k+1}- \bm g^*_k \rVert \leq 2 C_1 \leq 2 C_2, \quad \forall \, k\geq \bar{k}.
		\end{equation}
		Next, for any $\lambda\in(0,1)$ and $k\geq \bar{k}$, we have
		\begin{align}\label{T5-cont.}
			\lVert \bm g^*_{k+1} \rVert^2 &\leq \lVert \lambda \boldsymbol{\xi}_{k+1} + (1-\lambda)\bm g^*_k \rVert^2\nonumber \\& 
			= \lambda^2 \lVert \boldsymbol{\xi}_{k+1} - \bm g^*_k \rVert^2 + 2\lambda {\bm g^*_k}^T (\boldsymbol{\xi}_{k+1} - \bm g^*_k)+ \lVert \bm g^*_k\rVert^2.
		\end{align}
		In virtue of \eqref{T5-v-inequality} and \eqref{T5-up-bound}, one can continue \eqref{T5-cont.} as
		\begin{align}\label{T5-g_star}
			\lVert \bm g^*_{k+1} \rVert^2&\leq 4\lambda^2 C_2^2 +2\lambda c   \lVert \bm g^*_k\rVert^2 -2\lambda   \lVert \bm g^*_k\rVert^2  +\lVert \bm g^*_k\rVert^2\nonumber\\&
			=4\lambda^2 C_2^2 +\big( 1-2\lambda (1-c)  \big) \lVert \bm g^*_k \rVert^2=: q(\lambda),
		\end{align}
		for all $\lambda\in(0, 1)$.
		One may verify that $\lambda^*:=(1-c)\lVert \bm g^*_k \rVert^2 /4C_2^2 \in (0,1)$ minimizes $q(\lambda)$, and
		\begin{equation*}
			q(\lambda^*)= \left(1-\frac{(1-c)^2\lVert \bm g^*_k \rVert^2}{4C_2^2} \right)\lVert \bm g^*_k \rVert^2.
		\end{equation*}
		By taking \eqref{T5-liminf-cons} into account, we can observe
		\begin{equation}\label{T5-q_t}
			q(\lambda^*)\leq \left(1-\frac{(1-c)^2\bar{w}^2}{4C_2^2} \right)\lVert \bm g^*_k \rVert^2.
		\end{equation}
		Noting that  $\bar{w}\leq C_2$ and $c\in(0, 1)$, one can deduce $\sigma:=1-\frac{(1-c)^2\bar{w}^2}{4C_2^2}\in(0, 1)$. Now, \eqref{T5-g_star} and \eqref{T5-q_t} yield
		\begin{equation}\label{T5-last}
			0\leq \lVert \bm g^*_{k+1} \rVert^2\leq q(\lambda^*)\leq \sigma \lVert \bm g^*_k \rVert^2, \quad \forall \, k\geq \bar{k},
		\end{equation}
		which means the sequence $\{\lVert \bm g^*_k \rVert \}_{k\in\mathbb{N}_0}$ is convergent. Let $\lVert \bm g^*_k \rVert\to l$, as $k\to \infty$. Since $\sigma\in(0, 1)$, we conclude from \eqref{T5-last} that $l=0$, and therefore 
		$$\bm g^*_k\to \bm 0, \quad \text{as} \quad k\to \infty,$$
		contradicting  \eqref{T5-liminf-cons}.
	\end{proof}
	
	Concerning case (ii), the main result is provided in the following theorem.
	
	\begin{theorem}
		Suppose that the index set $K$ is finite, and
		$$\bar k:=\max\{k\,\,: \,\, k\in K  \},$$
		with the convention that  $\bar k:=0$ if $K=\emptyset$. 	Then, $\bm x^*:=\bm x_{\bar k + 1}$ is a stationary point for problem~\eqref{Main_Problem-1}.
	\end{theorem}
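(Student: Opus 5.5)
The plan is to reduce the statement to Lemma~\ref{L4'} with the constant sequence $\bm x_k=\bm x^*$ along a suitable subsequence.

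\textbf{Step 1: the iterate freezes.} Since $K$ is finite and $\bar k$ is its largest element (or $\bar k=0$ when $K=\emptyset$), every iteration $k>\bar k$ is a null step. Inspecting Algorithm~\ref{main-alg}, such steps set $\bm x_{k+1}:=\bm x_k$. Hence $\bm x_k=\bm x_{\bar k+1}=\bm x^*$ for all $k\geq \bar k+1$, and $\bm x^*\in C$. (When $K=\emptyset$ this holds for all $k$, since then $\bm x_k=\bm x_0$.)

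\textbf{Step 2: the radii go to zero.} I claim that $\varepsilon_k\downarrow 0$ and $\delta_k\downarrow 0$. Lemma~\ref{T5} gives $\liminf_{k\to\infty}\lVert W(\tilde t,k)\rVert=0$.

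Suppose, for contradiction, that $\delta_k$ is eventually constant, say $\delta_k=\delta_{\hat k}>0$ for all $k\geq\hat k$. Then the test $\lVert W(\tilde t,k)\rVert\leq\delta_k$ fails for every $k\geq \hat k$, because a success would shrink $\delta_k$. This means $\lVert W(\tilde t,k)\rVert>\delta_{\hat k}$ for all such $k$, so $\liminf_{k\to\infty}\lVert W(\tilde t,k)\rVert\geq\delta_{\hat k}>0$, contradicting Lemma~\ref{T5}.

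So the reduction branch is executed infinitely often. Each execution multiplies $\varepsilon_k$ and $\delta_k$ by $\sigma\in(0,1)$, and these sequences are otherwise unchanged, so $\varepsilon_k\downarrow0$ and $\delta_k\downarrow0$.

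\textbf{Step 3: choose the subsequence and apply Lemma~\ref{L4'}.} Let $\mathcal K:=\{k\geq \bar k+1:\lVert W(\tilde t,k)\rVert\leq\delta_k\}$. By Step 2 this set is infinite. Along $\mathcal K$ we have $\bm x_k=\bm x^*$, and
$$\max\{\lVert W(\tilde t,k)\rVert,\varepsilon_k\}\leq\max\{\delta_k,\varepsilon_k\}\to0.$$
Lemma~\ref{L4'} then yields that $\bm x^*$ is stationary for problem~\eqref{Main_Problem-1}.

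Lemma~\ref{L4'} itself relies on upper semicontinuity of $\partial_\varepsilon f$ and on Theorem~\ref{T2}. Note that no boundedness assumption is needed here, since the iterates are constant.

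\textbf{Main obstacle.} The delicate point is Step 2. Lemma~\ref{T5} only provides a liminf along the whole sequence, not a statement about the branch actually taken. The argument has to turn this into the fact that the reduction branch occurs infinitely often, and this uses that $\delta_k$ changes only in that branch and is otherwise held fixed. One must also check that the convention $\bar k=0$ for $K=\emptyset$ causes no issue, which it does not since $\bm x_k=\bm x_0$ for all $k$ in that case.
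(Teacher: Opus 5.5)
Your proof is correct and follows the paper's argument: the iterates freeze after $\bar k$, Lemma~\ref{T5} gives $\liminf_{k\to\infty}\lVert W(\tilde t,k)\rVert=0$, $\varepsilon_k\downarrow 0$, and Lemma~\ref{L4'} finishes. Your Step 2 makes explicit why $\varepsilon_k\downarrow 0$, which the paper only attributes to ``the construction of the algorithm'', and taking $\mathcal K$ to be the indices where the reduction branch fires is an equivalent way to extract the subsequence the paper obtains from the liminf.
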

	
	\begin{proof}
		By assumption, $\bm x_k=\bm x_{\bar k+1}$, for all $k>\bar k$, and therefore
		\begin{equation}
			\bm x_k\to \bm x^* \quad \text{as} \quad k\to\infty.
		\end{equation}
		Clearly, $\bm x^*\in C$. Moreover, since $K$ is finite, it follows from Lemma~\ref{T5} that 
		\begin{equation}\label{T6-1}
			\liminf_{k\to\infty} \lVert W(\tilde{t}, k) \rVert \to 0.
		\end{equation}
		Now, the construction of Algorithm~\ref{main-alg} implies $\varepsilon_k\downarrow 0$, as $k\to\infty$. This fact together with \eqref{T6-1} ensures the existence of $\mathcal{K}\subseteq\mathbb{N}_0$ such that
		$$\max\{ \lVert W(\tilde{t}, k)\rVert, \varepsilon_k  \}\to 0 \quad \text{as} \quad k\xrightarrow{k\in\mathcal{K}}\infty.$$
		Next, it follows from Lemma~\ref{L4'} that $\bm x^*$ is stationary for problem~\eqref{Main_Problem-1}.
	\end{proof}
	
	\section{Numerical Experiments}\label{Numeric}
	In this section, we present an implementation of the proposed \textbf{P}rojected \textbf{D}escent \textbf{S}ubgradient \textbf{M}ethod (\textbf{PDSM}) and evaluate its practical performance. The first experiment considers a collection of convex and nonconvex academic test problems and reports the main computational results. The second experiment addresses image denoising using the $\ell_1$ version of total variation. The third experiment concerns the approximation of the Pareto front for a nonconvex multiobjective optimization problem. The final experiment addresses a constrained data clustering problem in which the centroid of each cluster is required to lie within an octagon.
	
	We implemented the proposed \textbf{PDSM} in \textsc{Matlab} (R2022b) on a computer equipped with an Intel Core i5 processor and 16 GB of RAM. The convex quadratic subproblems arising in the computation of the search directions were solved using the \texttt{quadprog} solver. For problems with bound constraints, the projection admits a closed-form expression. For general polyhedral constraints, the projection subproblems were solved using an active-set method.
	
	In the subsequent experiments, all objective functions are locally Lipschitz. Moreover, most of them are weakly semismooth, being either piecewise linear or the pointwise maximum of finitely many smooth functions. Although verifying weak semismoothness may be difficult in general, our computational experience indicates that, for the locally Lipschitz functions considered in this section, the projected subgradient search of Algorithm~\ref{projected-mifflin-line search} terminates after finitely many iterations. In the rare event that the projected subgradient search fails to terminate, a small random perturbation of the current iterate is introduced as a heuristic to avoid stagnation.

	Regarding the parameters of the proposed method, we adopt the following choices:
	
	\begin{itemize}
		\item \textbf{Algorithm~\ref{exp-limited-line-search}.} The scale parameter is set to $\hat{t}:=0.005$, the sufficient decrease parameter to $\beta:=10^{-6}$, and $(S_{\max},p)$ is selected from the set
		\[
		\{(100,50),\,(150,100),\,(200,150),\,(500,300)\}.
		\]
		For challenging problems, the choice $(S_{\max},p)=(500,300)$ is recommended.
		\item \textbf{Algorithm~\ref{projected-mifflin-line search}.} The parameter $c$ is set to $0.9$, and the reduction factor is chosen as $r:=0.25$. Moreover, $t_i$ is updated by an interpolation procedure proposed in \cite{kiwielbook}.
		
		\item \textbf{Algorithm~\ref{main-alg}.} We set $\delta_0:=0.5$, $\varepsilon_0:=0.1$, and the reduction factor $\sigma:=0.5$. Moreover, the norm of $W(\tilde{t},k)$ is normalized throughout the optimization process by dividing it by
		\[
		\max\{1,\|\bm{x}_0\|,\|\boldsymbol{\xi}_0\|\}.
		\]
	\end{itemize}
	
	The termination criteria are specified separately for each experiment.

	%Regarding the parameters of the proposed method, our choices are as follows.  In Algorithm~\ref{exp-limited-line-search}, we work with the scale parameter $\hat{t}:=0.005$, and  sufficient decrease parameter $\beta:=10^{-6}$. Moreover, $(S_{\max}, p)$ can be chosen from the set $\{(100, 50), (150,100), (200, 150), (500, 300)\}$.
	% Regarding Algorithm~\ref{projected-mifflin-line search}, we put $c:=0.9$, and the reduction factor $r$ is set to be $0.25$.  In Algorithm~\ref{main-alg}, we work with $\delta_0:=0.5$, $\varepsilon_0:=0.1$, and the reduction factor $\sigma$ is set to be $0.5$. Moreover, the norm of $W(\tilde{t}, k)$ is normalized throughout the optimization process through division by $\max\{1, \lVert \bm x_0 \rVert, \lVert \boldsymbol{\xi}_0 \rVert\}$. Concerning the termination criteria, we will impose several conditions, which we specify individually for each experiment.

	\subsection{Academic Test Problems}
	
	In this experiment, we consider three classes of academic test problems according to the structure of their feasible regions, namely:
	\begin{itemize}
		\item[(i)] Bound Constrained Problems (BCP);
		
		\item[(ii)] Linearly Constrained Problems (LCP);
		
		\item[(iii)] General Constrained Problems (GCP), whose feasible regions are of the form
		\[
		\Omega:=\{\bm{x}\in\mathbb{R}^n:\; g_i(\bm{x})\le0,\quad i\in I\},
		\]
		where $I\subset\mathbb{N}$ is a finite index set and each
		$g_i:\mathbb{R}^n\rightarrow\mathbb{R}$ is a smooth function.
	\end{itemize}

	Although the proposed method is not designed to handle problem class~(iii) directly, it can be extended to general constrained problems through a sequential scheme based on linearizations of the constraint functions. This scheme is described in Algorithm~\ref{SLP}, where the proposed \textbf{PDSM} is applied to a sequence of linearly constrained subproblems. Developing termination criteria based on first-order optimality conditions, together with a convergence analysis, lies beyond the scope of this paper and constitutes an interesting direction for future research. In this algorithm, the stopping tolerances are set to $\varepsilon_1=\varepsilon_2:=5\times10^{-4}$. Furthermore, the nonnegative thresholds are chosen as
	$c_1:=10^{-2}$, $c_2:=0$, $\rho_1:=10^{-2}$, and $\rho_2:=10^{-3}$, while the initial trust-region radius is set to $\Delta_0:=0.1$.
	
	\begin{algorithm}[h]
		\caption{A Sequential Linearization Scheme}
		\label{SLP}
		\hspace*{\algorithmicindent}\textbf{Inputs:} Objective function $f:\mathbb{R}^n\to\mathbb{R}$, feasible region $\Omega:=\{\bm x\in\mathbb{R}^n\,:\, g_i(\bm x)\leq 0\,\, \text{for all}\,\, i\in I  \}$, initial radius $\Delta_0>0$, starting point $\bm x_0\in\Omega$, and positive stopping tolerances $\varepsilon_1$ and $\varepsilon_2$.  \\
		\hspace*{\algorithmicindent}\textbf{Parameters:} Nonnegative thresholds $c_1 >c_2\geq 0$ and $\rho_1>\rho_2>0$.   \\
		%	\hspace*{\algorithmicindent}\textbf{Outputs:} Step size $t_k\geq 0$, and indicator $I_k\in\{ 0, 1, 2\}$.\\
		
		\begin{algorithmic}[1]
			\STATE{\textbf{Initializations:} Set $k:=0$ ;}
			\WHILE{$True$}
			\STATE Set {\small $\Omega(\bm x_k):=\{\bm d\in\mathbb{R}^n\,:\, g_i(\bm x_k)+\nabla g_i(\bm x_k)^T \bm d \leq 0, \, \forall \,\, i\in I \,\, \text{and} \,\, \lVert \bm d \rVert_{\infty} \leq \Delta_k   \}    $} ;
			\STATE Employ \textbf{PDSM} to find the stationary point $\bm d^*_k$ of subproblem $$\min_{\bm d} f(\bm x_k + \bm d) \quad \text{s.t.} \quad \bm d\in \Omega(\bm x_k).  $$

			\IF{$\lvert f(\bm x_k + \bm d^*_k)-f(\bm x_k)  \rvert<\varepsilon_1$ \text{and}  $\max_{i\in I}\{ g_i(\bm x_k ), 0 \}< \varepsilon_2$  }
			\STATE{ Return $\bm x_k$  and \textbf{STOP} ;}
			\ELSIF {$ f(\bm x_k + \bm d^*_k)-f(\bm x_k) < -c_1$ \text{and}  $\max_{i\in I}\{ g_i(\bm x_k + \bm d^*_k), 0 \}< \rho_2$   }
			\STATE{Set $\bm x_{k+1}:=\bm x_k + \bm d^*_k$ \,\, \text{and} \,\, $\Delta_{k+1}:=2\Delta_k$ ; }
			\ELSIF {$ f(\bm x_k + \bm d^*_k)-f(\bm x_k) < -c_2$ \text{and}  $\max_{i\in I}\{ g_i(\bm x_k + \bm d^*_k), 0 \}< \rho_1$   }
			\STATE{Set $\bm x_{k+1}:=\bm x_k + \bm d^*_k$ \text{and} \,\, $\Delta_{k+1}:=\Delta_k$ ; }
			\ELSE
			\STATE{Set $\bm x_{k+1}:=\bm x_k$ \,\, \text{and} \,\, $\Delta_{k+1}:=0.5\Delta_k$ ; }
			\ENDIF
			\STATE{Set $k\leftarrow k+1$ ;}

			\ENDWHILE
		\end{algorithmic}
		%\hspace*{\algorithmicindent}\textbf{ End Function}
	\end{algorithm}

	We will consider the following three instances of general constrained minimization problems in our experiments:
	
	\begin{align}\tag{\textbf{P1}}
		\begin{split}
			&\quad\quad\,\,\min\,\, \ln(e^{\lvert x_1 \rvert}+e^{\lvert x_2 \rvert}+e^{\lvert x_3 \rvert})\\&
			\text{s.t.} \quad \sinh(x_1)+\cosh(x_2)-x_3^2-1\leq 0.
		\end{split}
	\end{align}
	The global minimizer is $\bm x^*=\bm 0$ with optimal value $f(\bm x^*)=1.0986$. The suggested starting point is $\bm x_0=[-10, 2, 2]$.
	The second problem is
	{\small
		\begin{align}\tag{\textbf{P2}}
			\begin{split}
				&\quad\quad\,\, \min\,\, \max\{x_1^2-2x_1+(x_2-1)^2-3x_2,  -x_1^2+2x_1-(x_2-1)^2+4x_2 +13 \}\\&
				\text{s.t.} \quad -\ln(x_1)-x_2^2+1\leq 0,\\&
				\qquad \,\,-x_1+1\leq 0,\\&
				\qquad \,\,\,\,\,x_2+1\leq 0.
			\end{split}
		\end{align}
	}
	A (local) minimizer is given by $\bm x^*=[1, -1]$  with $f(\bm x^*)=6$. The suggested starting point is $\bm x_0=[10, -10]$. The third problem has a piecewise linear objective function and is defined by
	\begin{align}\tag{\textbf{P3}}
		\begin{split}
			&\quad\quad\,\,\min\,\, \max\{x_1, x_2, x_3\} + x_1+x_2+x_3\\&
			\text{s.t.} \quad -e^{\frac{x_1+x_2+x_3}{3^{\textcolor{white}{1}}       }}+e\leq 0,
		\end{split}
	\end{align}
	whose global minimizer is  $\bm x^*=[1, 1, 1]$ with optimal value $f(\bm x^*)=4$. The suggested starting point is $\bm x_0=[3, 3, 3]$. 
	
	Table~\ref{Table1} summarizes the test problems considered in this experiment, including BCP, LCP, and GCP. Here, $n$ denotes the problem dimension, and $f^*$ is a known (local) optimal value. For the BCP class, lower and upper bounds, denoted by \textbf{LB} and \textbf{UB}, are imposed on all variables so that the known (local) minimizer is neither an interior point of the feasible region nor a differentiable point of the objective function. The only exception is problem P11, for which a local minimizer is unavailable. A detailed description of test problems P1--P22 can be found in \cite{Bagirov2014,kiwielbook}.
	\begin{table}[h]
		\centering
		\caption{A list of test problems}\label{Table1}
		\resizebox{\textwidth}{!}{%
			\begin{tabular}{|lllllllllclllll|}
				\hline
				&  \rule{0pt}{3ex}P  &  & Name                   &  & $n$ &  & $f^*$     &  & Convex? &  & Class &  & [\textbf{LB}, \textbf{UB} ]          &  \\ \cline{2-2} \cline{4-4} \cline{6-6} \cline{8-8} \cline{10-10} \cline{12-12} \cline{14-14}
				&  \rule{0pt}{3ex}1  &  & MXHILB                 &  & 200 &  & 0.0000    &  & Yes       &  & BCP   &  & $[\bm 0, \bm {2} ]$  &  \\
				& 2  &  & L1HILB                 &  & 200 &  & 0.0000  &  & Yes       &  & BCP   &  & $[\bm 0, \bm{2}]$   &  \\
				& 3  &  & MAXL                   &  & 200 &  & 1.0000    &  & Yes       &  & BCP   &  & $[\bm 1, \bm 3]$    &  \\
				& 4  &  & MAXQ                   &  & 200 &  & 1.0000    &  & Yes       &  & BCP   &  & $[\bm 1, \bm 3]$    &  \\
				& 5  &  & Chained LQ             &  & 200 &  & -281.4284  &  & Yes       &  & BCP   &  & $[\bm 1/\sqrt{\bm 2}, \bm 5]$    &  \\
				& 6  &  & Chained CB3 I          &  & 200 &  & 398.0000 &  & Yes       &  & BCP   &  & $[\bm 1, \bm 3]$    &  \\
				& 7  &  & Chained CB3 II         &  & 200 &  & 398.0000 &  & Yes       &  & BCP   &  & $[\bm 1, \bm 3]$    &  \\
				& 8  &  & Number of Active Faces &  & 200 &  & 0.0000    &  & No        &  & BCP   &  & $[\bm 0, \bm 2]$    &  \\
				& 9  &  & Chained Crescent 1     &  & 200 &  & 0.0000  &  & No        &  & BCP   &  & $[\bm{-2}, \bm 0]$    &  \\
				& 10 &  & Chained Crescent 2     &  & 200 &  & 0.0000  &  & No        &  & BCP   &  & $[\bm{-2}, \bm 0]$    &  \\
				& 11 &  & Chained Mifflin 2      &  & 200 &  & -140.8600 &  & No        &  & BCP   &  & $[\bm {-1}, \bm 1]$   &  \\
				& 12 &  & Brown Function 2         &  & 200 &  & 0.0000  &  & No        &  & BCP   &  & $[\bm 0, \bm 1]$   &  \\
				& 13 &  & Rosenbrock             &  & 2   &  & 1.0000    &  & No        &  & BCP   &  & $[\bm {-10}, \bm 0 ]$  &  \\
				& 14 &  & Wong 2C               &  & 10  &  & 24.3062   &  & Yes        &  & LCP   &  & \multicolumn{1}{l}{-} &  \\
				& 15 &  & Ill-conditioned LP                &  & 15  &  & -20.0420  &  & Yes        &  & LCP   &  & \multicolumn{1}{l}{-} &  \\
				& 16 &  & MAD 1                  &  & 2   &  & -0.3896   &  & No        &  & LCP   &  & \multicolumn{1}{l}{-} &  \\
				& 17 &  & MAD 2                  &  & 2   &  & -0.3303   &  & No        &  & LCP   &  & \multicolumn{1}{l}{-} &  \\
				& 18 &  & MAD 4                  &  & 2   &  & -0.4489   &  & No        &  & LCP   &  & \multicolumn{1}{l}{-} &  \\
				& 19 &  & MAD 5                  &  & 2   &  & -0.4292   &  & No        &  & LCP   &  & \multicolumn{1}{l}{-} &  \\
				& 20 &  & Pentagon               &  & 6   &  & -1.8596   &  & No        &  & LCP   &  & \multicolumn{1}{l}{-} &  \\
				& 21 &  & MAD 6                  &  & 7   &  & 0.040152    &  & No        &  & LCP   &  & \multicolumn{1}{l}{-} &  \\
				& 22 &  & MAD 8                  &  & 20  &  & 0.5069    &  & No        &  & LCP   &  & \multicolumn{1}{l}{-} &  \\
				& 23 &  & $\bm{P1}$ in this paper       &  & 3   &  & 1.0986     &  & No        &  & GCP   &  & \multicolumn{1}{l}{-} &  \\
				& 24 &  & $\bm{P2}$ in this paper       &  & 2   &  & 6.0000    &  & No        &  & GCP   &  & \multicolumn{1}{l}{-} &  \\
				& 25 &  & $\bm{P3}$ in this paper       &  & 3   &  & 4.0000    &  & No        &  & GCP   &  & \multicolumn{1}{l}{-} &  \\ \hline
			\end{tabular}
		}
	\end{table}

	For problems P1--P22, since the optimal value $f^*$ is available, the optimization process is terminated once the relative error
	\begin{equation}\label{RE}
		RE(\bm x_k):= \frac{\lvert f(\bm x_k)-f^* \rvert}{\rvert f^*\lvert + 1},
	\end{equation}
	falls below $5\times10^{-4}$. In addition, the maximum number of iterations is set to $10^4$.
	
	Table~\ref{Table2} reports the numerical results obtained by the proposed \textbf{PDSM} on problems P1--P22. Here, ``Iter'' denotes the total number of iterations, while ``Fun'' and ``Sub'' represent the numbers of function and subgradient evaluations, respectively. Furthermore, $f_{\mathrm{best}}$ is the lowest objective value achieved during the optimization process, $v_f$ denotes the value of the optimality certificate $v_k$ at the final iteration, and ``RE'' is the relative error associated with $f_{\mathrm{best}}$. As shown in the table, the proposed method attained the prescribed accuracy for all test problems within a reasonable computational time. It is also worth noting that the number of subgradient evaluations is substantially smaller than the number of function evaluations. This behavior stems from the fact that the \textbf{PDSM} computes a new nonredundant subgradient only when the limited exponential line search of Algorithm~\ref{exp-limited-line-search} indicates that the current search direction is not an effective direction. In some cases, the number of function or subgradient evaluations is smaller than the number of iterations. This occurs because, once the first conditional block of  Algorithm~\ref{main-alg} is executed, the iteration counter $k$ is increased by one without performing any function or subgradient evaluations.

	% Concerning problems P23-P25, at each iteration, Algorithm~\ref{SLP} executes \textbf{PDSM} using optimality tolerance $\tau:=5*10^{-2}$, and maximum number of iterations $10^2$.

	\begin{table}[h]
		\centering
		\caption{Numerical results of the \textbf{PDSM} on problems P1-P22.}\label{Table2}
		\resizebox{\textwidth}{!}{%
			\begin{tabular}{|lllllllllllllllll|}
				\hline
				& \rule{0pt}{3ex} P  &  & Iter &  & Fun    &  & Sub  &  & $f_{best}$ &  & $v_f$  &  & RE     &  & Time(s) &  \\ \cline{2-2} \cline{4-4} \cline{6-6} \cline{8-8} \cline{10-10} \cline{12-12} \cline{14-14} \cline{16-16}
				& \rule{0pt}{3ex}1  &  & 263  &  & 601   &  & 351  &  & 0.0004     &  & 0.0002 &  & 0.0004 &  & 11.12    &  \\
				& 2  &  & 39   &  & 66     &  & 34   &  & 0.0000     &  & 0.0121 &  & 0.0000 &  & 0.70     &  \\
				& 3  &  & 241  &  & 835   &  & 235  &  & 1.0009     &  & 0.0100 &  & 0.0004 &  & 0.25     &  \\
				& 4  &  & 365 &  & 2328  &  & 360 &  & 1.0009     &  & 0.0304 &  & 0.0004 &  & 0.32    &  \\
				& 5  &  & 53   &  & 724  &  & 47   &  & -281.2880  &  & 0.0152 &  & 0.0004 &  & 1.29     &  \\
				& 6  &  & 31  &  & 610   &  & 28  &  & 398.0009  &  & 0.0707 &  & 0.0000 &  & 1.40     &  \\
				& 7  &  & 53   &  & 1126   &  & 248  &  & 398.1516  &  & 0.0309 &  & 0.0003 &  & 1.57     &  \\
				& 8  &  & 26   &  & 30     &  & 16   &  & 0.0000     &  & 0.0007 &  & 0.0000 &  & 0.04     &  \\
				& 9  &  & 46   &  & 171   &  & 136  &  & 0.0004     &  & 0.0009 &  & 0.0004 &  & 7.36    &  \\
				& 10 &  & 39   &  & 268   &  & 34   &  & 0.0000     &  & 0.0196 &  & 0.0000 &  & 0.44     &  \\
				& 11 &  & 30 &  & 605 &  & 24 &  & -140.8114  &  & 0.0120 &  & 0.0003 &  & 0.33   &  \\
				& 12 &  & 25  &  & 464   &  & 19  &  & 0.0004     &  & 0.0141 &  & 0.0004 &  & 1.19   &  \\
				& 13 &  & 14   &  & 20     &  & 11   &  & 1.0000     &  & 0.0891 &  & 0.0000 &  & 0.01     &  \\
				& 14 &  & 458  &  & 131029  &  & 1053  &  & 24.3188    &  & 0.0237 &  & 0.0004 &  & 2.92     &  \\
				& 15 &  & 36  &  & 60 &  & 31  &  & -20.0315  &  & 0.0235 &  & 0.0004 &  & 0.03    &  \\
				& 16 &  & 40  &  & 1857  &  & 263  &  & -0.3891    &  & 0.0070 &  & 0.0003 &  & 0.06     &  \\
				& 17 &  & 13   &  & 6      &  & 4    &  & -0.3296    &  & 0.0019 &  & 0.0004 &  & 0.01     &  \\
				& 18 &  & 62   &  & 102    &  & 52   &  & -0.4484    &  & 0.0009 &  & 0.0002 &  & 0.03     &  \\
				& 19 &  & 40   &  & 56     &  & 29   &  & -0.4289    &  & 0.0002 &  & 0.0001 &  & 0.02     &  \\
				& 20 &  & 307  &  & 28348 &  & 1000 &  & -1.8581   &  & 0.0045 &  & 0.0004 &  & 0.74    &  \\
				& 21 &  & 85  &  & 3193  &  & 276  &  & 0.0406     &  & 0.0019 &  & 0.0004 &  & 0.32    &  \\
				& 22 &  & 18  &  & 758 &  & 12  &  & 0.5076     &  & 0.0132 &  & 0.0004 &  & 0.05  &  \\ \hline
			\end{tabular}
		}
	\end{table}

	Next, we apply Algorithm~\ref{SLP} to the general constrained problems P23--P25. At each iteration, the algorithm invokes the proposed \textbf{PDSM} with an optimality tolerance of $\tau:=5\times10^{-2}$ and a maximum of 50 iterations. The numerical results are reported in Table~\ref{Table3}, where ``Con.Acc'' denotes the value of
	\[
	\max_{i\in I}\{g_i(\bm{x}),0\},
	\]
	at the final iteration, which measures the constraint violation. Although the proposed sequential scheme has been tested on only a limited number of general constrained problems, the results reported in Table~\ref{Table3} suggest that it provides a promising basis for developing optimization methods that preserve the original form of the objective function while approximating the feasible region through successive linearizations of the constraints.

	\begin{table}[h]
		\centering
		\caption{Numerical results of Algorithm~\ref{SLP} on problems P23-P25.}\label{Table3}
		\resizebox{\textwidth}{!}{%
			\begin{tabular}{|lclllllllllllllll|}
				\hline
				&\rule{0pt}{3ex} P                     &  & Iter &  & Fun &  & Sub &  & $f_{best}$ &  & Con.Acc &  & RE &  & Time(s) &  \\ \cline{2-2} \cline{4-4} \cline{6-6} \cline{8-8} \cline{10-10} \cline{12-12} \cline{14-14} \cline{16-16}
				&\rule{0pt}{3ex}23                     &  &   15   &  &   9271   &  &   1100  &  &    1.0996     &  &  3E-5          &  &  4E-4  &  &  0.32       &  \\
				& 24                     &  &  11    &  & 46    &  &  35   &  &  6.0000        &  &    1E-16      &  &  1E-8  &  & 0.09        &  \\
				& 25 &  &  8    &  &14166     &  &   2009  &  &    4.0002     &  &     0     &  &  4E-5  &  &   0.41      &  \\ \hline
			\end{tabular}
		}
	\end{table}
	
	To the best of our knowledge, projection-based descent methods for minimizing weakly semismooth functions over closed and convex sets have received limited attention in the literature. For comparison purposes, we consider the classical \textbf{P}rojected \textbf{S}ubgradient \textbf{M}ethod (\textbf{PSM}) and its generalization, the \textbf{M}irror \textbf{D}escent \textbf{M}ethod
	\footnote{For the mirror descent method, the Bregman distance is generated by the negative entropy.}
	(\textbf{MDM}), both equipped with the adaptive step size rule proposed in \cite{Amir_Beck_first_order}. Although these methods are simple to implement, they are not descent methods, and their convergence theory is restricted to convex objective functions.

	\begin{figure}[!h]
		\centering % <-- added
		
		\begin{subfigure}
			{\includegraphics[width=.49\textwidth]{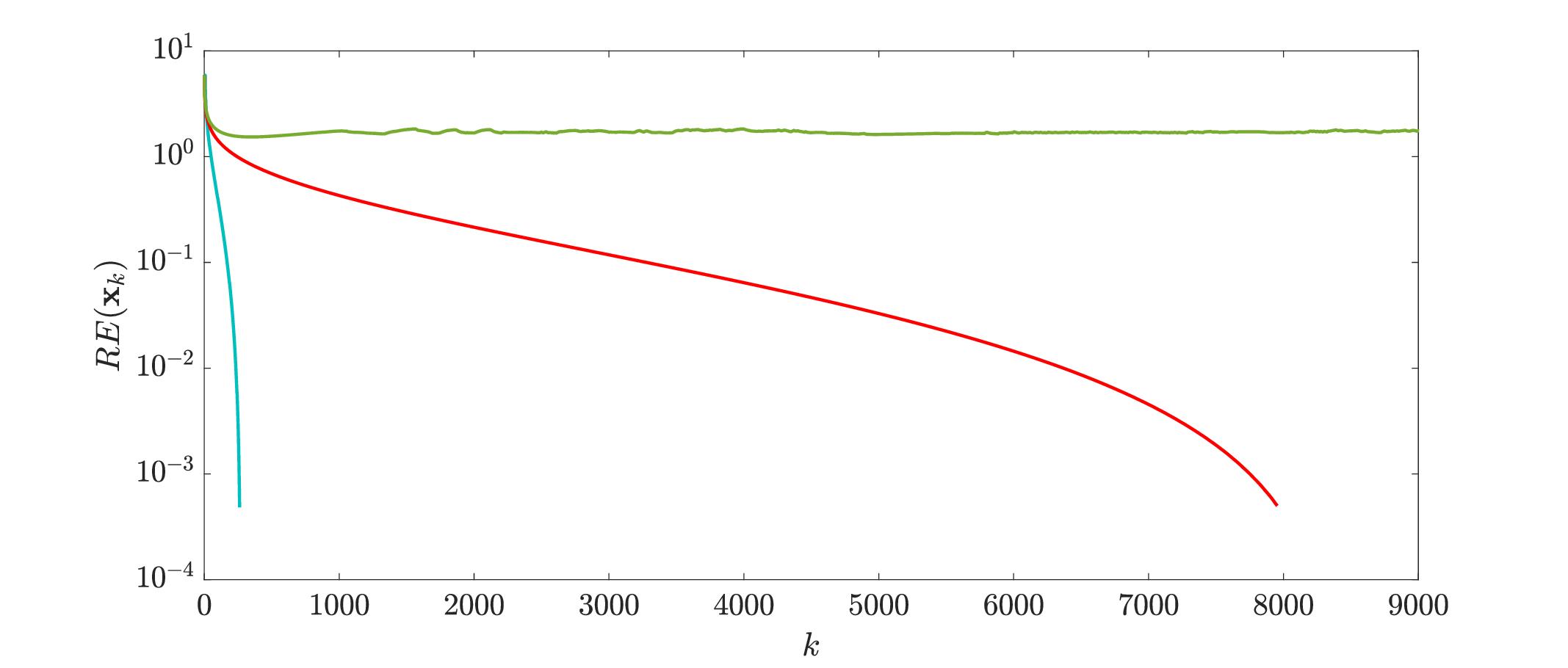}}
			{\includegraphics[width=.49\textwidth]{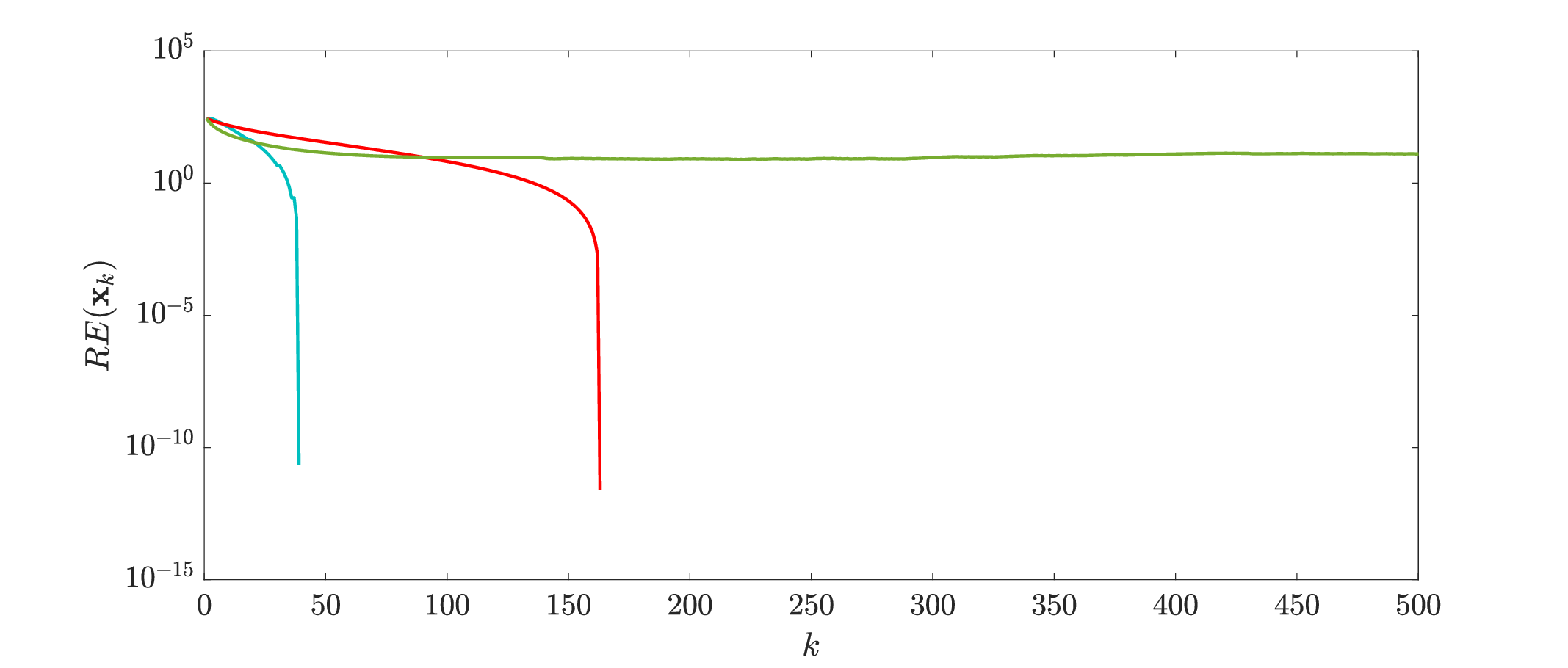}}
		\end{subfigure}\hfill
		\medskip
		\begin{subfigure}
			{\includegraphics[width=.49\textwidth]{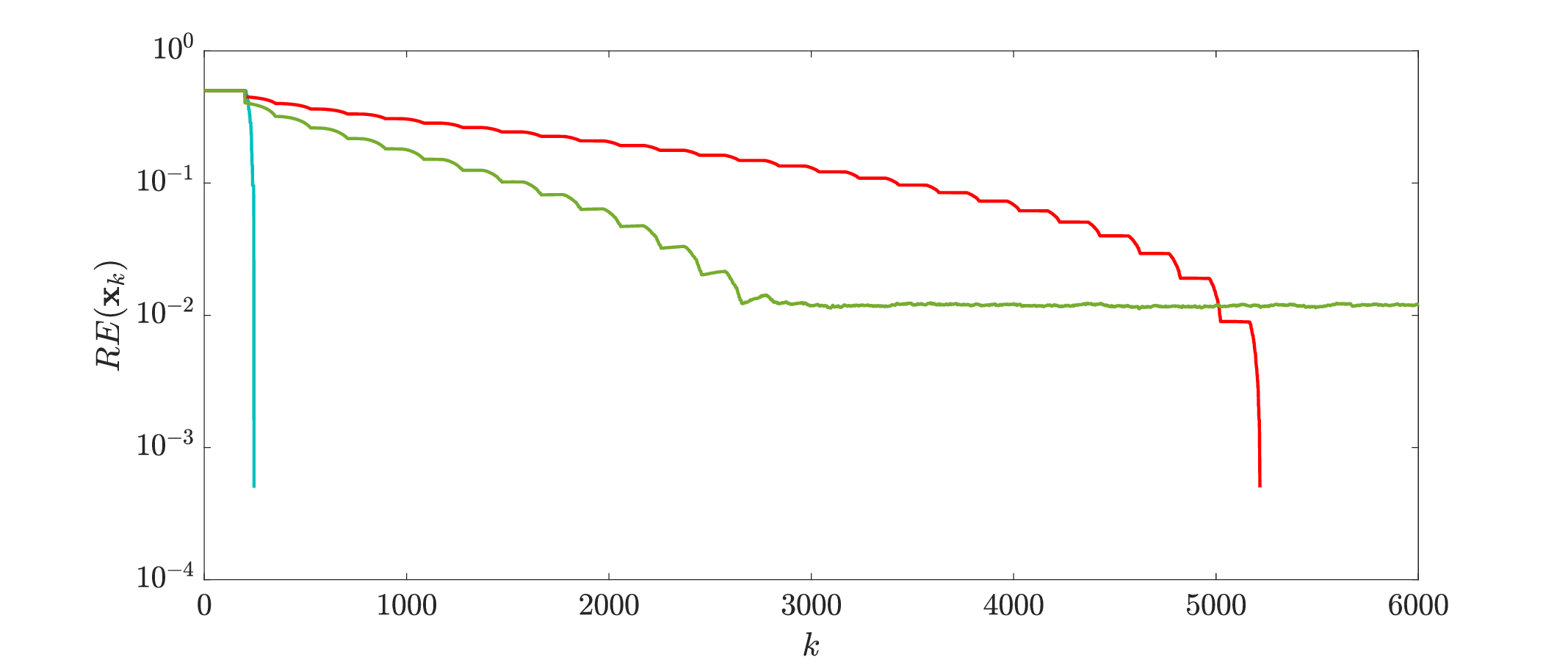}}
			{\includegraphics[width=.49\textwidth]{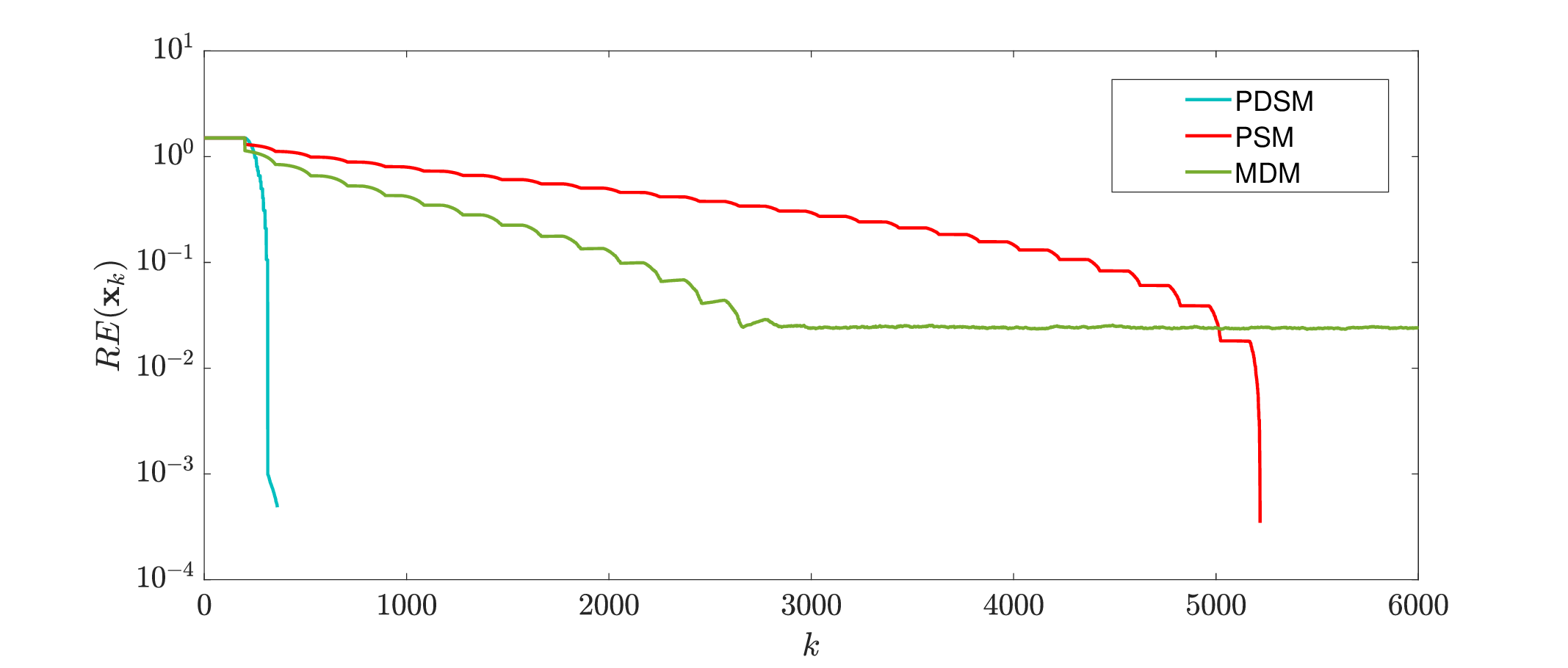}}
		\end{subfigure}
		
		\caption{Relative error versus iteration number for \textbf{PDSM}, \textbf{PSM}, and \textbf{MDM} on test problems P1 (top left), P2 (top right), P3 (bottom left), and P4 (bottom right).}
		\label{Fig1}
	\end{figure}

	Figure~\ref{Fig1} illustrates the performance of \textbf{PDSM}, \textbf{PSM}, and \textbf{MDM} on the convex test problems P1--P4. In each run, an algorithm was terminated either when the relative error defined in \eqref{RE} fell below $5\times10^{-4}$ or when the number of iterations reached $2\times10^{4}$. As shown, both \textbf{PDSM} and \textbf{PSM} achieved the prescribed accuracy. However, \textbf{PSM} required substantially more iterations than \textbf{PDSM} to do so. In contrast, \textbf{MDM} did not achieve the prescribed accuracy before reaching the maximum number of iterations. Overall, \textbf{PDSM} and \textbf{PSM} substantially outperformed \textbf{MDM} in terms of solution accuracy.
	
	\subsection{Image Denoising}
	As a large-scale application of the proposed method, we consider the problem of image denoising.
	Let $\bm A\in\mathbb{R}^{m\times n}$ be a normalized noisy image. A common approach to restoring the original image from the noisy image $\bm A$ is to solve the following optimization problem based on the $\ell_1$-variant of the total variation regularization:
	\begin{align}\label{cameraman}
		\begin{split}
			\min_{\bm U} \,\, &\lVert \bm U- \bm A \rVert_F^2 + \mu \left\{ \sum_{j=1}^{n} \sum_{i=1}^{m-1} \lvert U_{i+1,j}-U_{i,j} \rvert + \sum_{i=1}^{m} \sum_{j=1}^{n-1} \lvert U_{i,j+1}-U_{i,j} \rvert  \right\}\\&
			\text{s.t.} \,\,\, 0\leq U_{i,j}\leq 1, \,\,\,\, \forall \,\, i,j,
		\end{split}
	\end{align}
	where $\lVert \cdot \rVert_F $ denotes the Frobenius norm and $\mu>0$ is a  positive  regularization parameter. In this convex optimization model, the second term of the objective function, known as the total variation regularization, suppresses vertical and horizontal intensity variations, while the first term encourages the restored image to remain close to the observed noisy image, thereby balancing noise suppression and feature preservation.
	
	Let the \emph{Cameraman} image of size $640\times640$ be the original image. The noisy images were generated by adding zero-mean uniformly distributed random noise with noise levels of 0.05, 0.10, and 0.15 to the original image. Since each pixel corresponds to a decision variable, the resulting optimization problem has $640\times640=409,\!600$ variables, making it a large-scale optimization problem. We then applied the proposed \textbf{PDSM} to solve problem~\eqref{cameraman}. A randomly generated starting point, an optimality tolerance of $\tau:=0.1$, and a regularization parameter of $\mu:=0.07$ were used throughout the experiment.

	The quality of the restored images is evaluated using the \emph{Peak Signal-to-Noise Ratio} (PSNR) and the \emph{Structural Similarity Index Measure} (SSIM) \cite{PSNR}. PSNR is a widely used measure of image fidelity based on the pixel-wise reconstruction error, with higher values indicating better restoration quality. SSIM measures the similarity between two images in terms of their luminance, contrast, and structural information. Its value ranges from 0 to 1, with values closer to 1 indicating greater structural similarity to the original image.

	\begin{table}[h]
		\centering
		\caption{Numerical results of the \textbf{PDSM} on the considered instance of image denoising problem \eqref{cameraman}.}\label{Table4}
		\resizebox{\textwidth}{!}{%
			\begin{tabular}{|lllllllllllllllllllllll|}
				\hline
				& \rule{0pt}{3ex}$\sigma$ &  & Iter &  & Fun  &  & Sub  &  & $f_{best}$ &  & $v_f$  &  & Time(s) &  & PSNR(n) &  & PSNR(r) &  & SSIM(n) &  & SSIM(r) &  \\ \cline{2-2} \cline{4-4} \cline{6-6} \cline{8-8} \cline{10-10} \cline{12-12} \cline{14-14} \cline{16-16} \cline{18-18} \cline{20-20} \cline{22-22}
				& \rule{0pt}{3ex}0.05     &  & 746  &  & 6189 &  & 743  &  & 650.7528   &  & 0.0619 &  & 74.91   &  & 30.8224     &  & 32.1402     &  & 0.6718      &  & 0.8875      &  \\
				& 0.10     &  & 809  &  & 806  &  & 9999 &  & 1096.4137  &  & 0.0410 &  & 115.36  &  & 24.9816     &  & 31.5998     &  & 0.4045      &  & 0.8815      &  \\
				& 0.15     &  & 647  &  & 5835 &  & 644  &  & 1806.1525  &  & 0.0315 &  & 71.22   &  & 21.5781     &  & 30.6823     &  & 0.2737      &  & 0.8679      &  \\ \hline
			\end{tabular}
		}
	\end{table}

	Table~\ref{Table4} reports the numerical performance of the proposed \textbf{PDSM} at three noise levels, $\sigma\in\{0.05,0.1,0.15\}$. In this table, PSNR(n) denotes the PSNR of the noisy image with respect to the original image, whereas PSNR(r) denotes the PSNR of the reconstructed image with respect to the original image. Similarly, SSIM(n) and SSIM(r) denote the corresponding SSIM values for the noisy and reconstructed images, respectively.
	
	The results in Table~\ref{Table4} demonstrate the effectiveness of the optimization process carried out by \textbf{PDSM} for the considered image denoising problem. In all three cases, the reconstructed images exhibit substantial improvements in both PSNR and SSIM compared with the corresponding noisy images, with the improvement becoming more pronounced as the noise level increases.  Moreover, we observed that the majority of the computational time was spent verifying the optimality condition $v_k\leq \tau$. This observation suggests considering alternative termination criteria in this context, such as detecting negligible progress in the objective function values over a prescribed number of serious iterations. 
	In addition to the results reported in Table~\ref{Table4}, Figure~\ref{Fig2} presents the original, noisy, and reconstructed images for the noise level $\sigma=0.15$.

	\begin{figure}
		\centering
		
		\begin{subfigure}
			{\includegraphics[width=.32\textwidth]{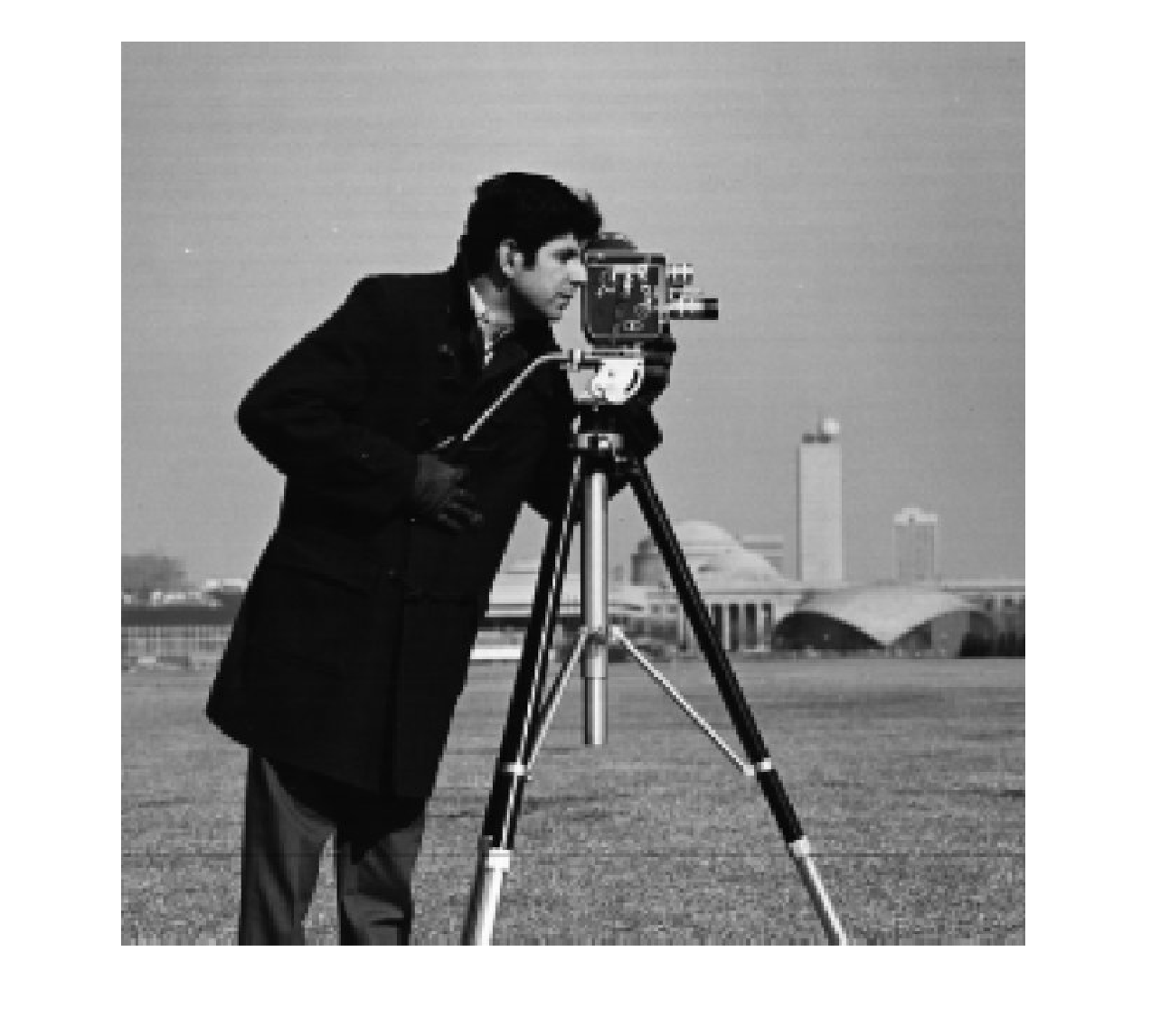}}
			{\includegraphics[width=.32\textwidth]{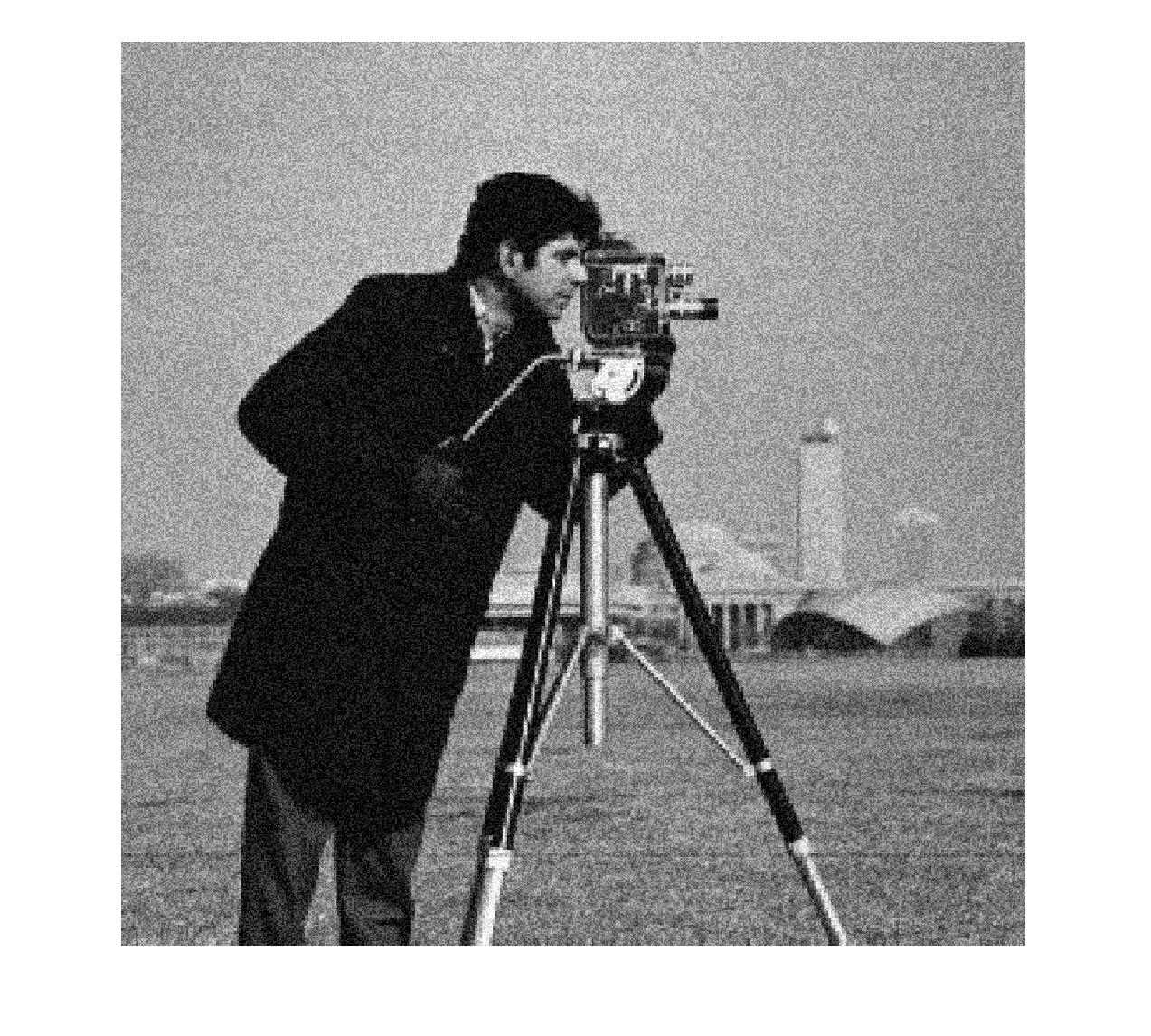}}
			{\includegraphics[width=.32\textwidth]{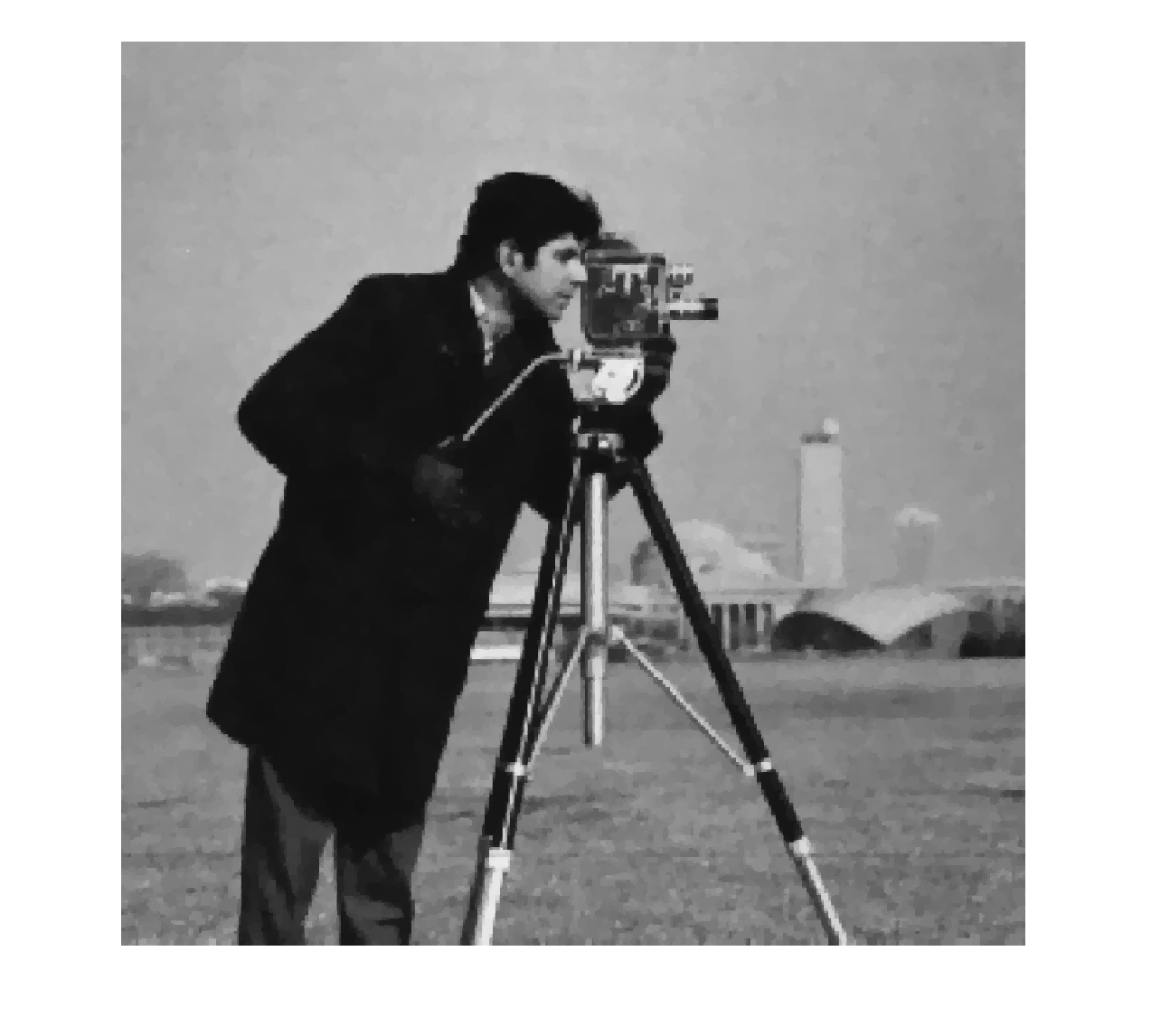}}
			
		\end{subfigure}\hfill
		\caption{Left. the original image. Middle. the noisy image using noise level $\sigma=0.15$. Right.  the reconstructed image by the \textbf{PDSM}. }\label{Fig2}
	\end{figure}

	\subsection{Multiobjective Optimization}
	In this experiment, we consider the following nonsmooth nonconvex constrained bi-objective optimization problem:
	\begin{align}\label{bi-objective}
		\min \,\, \big(f_1(\bm x), f_2(\bm x)\big) \quad \text{s.t.} \quad -0.5x_1+x_2\leq 0,
	\end{align}
	where $f_1, f_2:\mathbb{R}^2\to \mathbb{R}$ are given by
	\begin{equation*}
		f_1(\bm x):= \max\{ x_1^2+(x_2-1)^2+x_2-1, -x_1^2-(x_2-1)^2+x_2+1  \},
	\end{equation*}
	and
	\begin{equation*}
		f_2(\bm x):= \max\{ -x_1-x_2, -x_1-x_2+x_1^2+x_2^2-1 \},
	\end{equation*}
	respectively. This bi-objective optimization problem combines the \emph{Crescent} and \emph{LQ} test functions \cite{Bagirov2014}. The objective space  and the corresponding  \emph{Pareto front} of this problem are shown in the left and middle plots of Figure~\ref{Fig3}, respectively. Since the objective space is $\mathbb{R}^2_{\geq}$-convex, the weighted sum method can be used to approximate the entire Pareto front \cite{Ehrgott2005}. Accordingly, let $\Delta_2$ be the two-dimensional unit simplex, i.e.,
	$$\Delta_2:=\{(\lambda_1, \lambda_2)\in\mathbb{R}^2 \,\,:\,\, \lambda_1+ \lambda_2=1, \,\, \lambda_1\geq 0, \,\, \lambda_2\geq 0  \},$$
	and for a weighting vector $\boldsymbol{\lambda}\in\Delta_2$, we consider the following single-objective weighted sum problem:
	\begin{equation}
		\min \,\, \lambda_1 f_1(\bm x) + \lambda_2 f_2(\bm x) \qquad \text{s.t.} \quad -0.5 x_1 + x_2\leq 0.
	\end{equation}
	It is well known that any optimal solution of the weighted sum problem is a Pareto point of the bi-objective problem. Conversely, for an appropriate choice of the weighting vector $\boldsymbol{\lambda}\in\Delta_2$, every Pareto point of the bi-objective problem is an optimal solution of the corresponding weighted sum problem. In this respect, for a given $m\in\mathbb{N}$, we consider the following uniform grid of the simplex $\Delta_2$:
	\begin{equation}
		\Lambda_m:=\left\{(\lambda_1^i,\lambda_2^i):\,
		\lambda_1^i=\frac{i}{m},\;
		\lambda_2^i=1-\frac{i}{m},\;
		i=0,1,\ldots,m
		\right\}.
	\end{equation} 
	For each $\boldsymbol{\lambda}\in\Lambda_{200}$, we solved the corresponding weighted sum problem using the \textbf{PDSM} with the feasible starting point $\bm x_0=[8,4]$ and the optimality tolerance $\tau:=10^{-4}$. The right plot of Figure~\ref{Fig3} shows the resulting approximation of the Pareto front.

	\begin{figure}
		\centering
		
		\includegraphics[width=\textwidth]{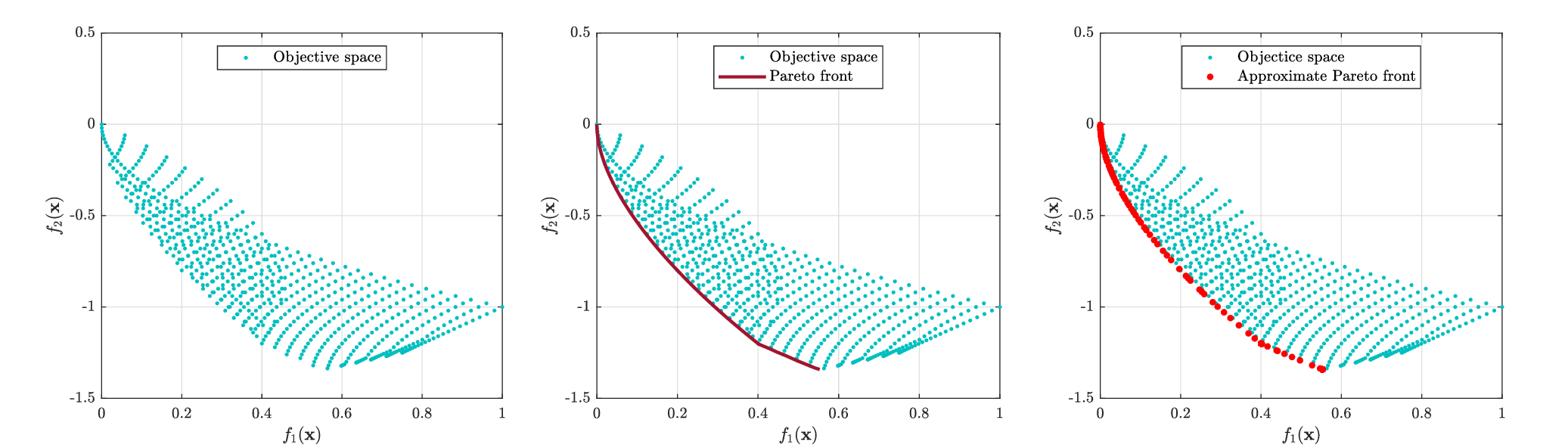}
		\caption{Left. the objective space. Middle. the Pareto front. Right. obtained approximate Pareto front by the \textbf{PDSM}. }\label{Fig3}
	\end{figure}

	When the Pareto front of a bi-objective optimization problem is connected, the \emph{Hole Absolute Size} (HAS) metric \cite{Multi-Indicators} provides an effective measure for evaluating the uniformity of the distribution of solutions along an approximate Pareto front.
	Let $\mathcal{P}$ denote an approximation of the Pareto front, with its elements sorted in ascending order according to the first objective function. For \mbox{$j = 1, \ldots, \lvert \mathcal{P}\rvert-1$}, define
	$d_j := \lVert \mathbf{p}_j - \mathbf{p}_{j+1} \rVert$,
	where $\mathbf{p}_j, \mathbf{p}_{j+1} \in \mathcal{P}$ are two consecutive solutions. 
	%Let
	%$$
	%\mu: = \frac{1}{\lvert \mathcal{P}\rvert-1}\sum_{j=1}^{\lvert \mathcal{P}\rvert-1} d_j
	%$$
	%be the average distance between consecutive solutions. 
	The HAS  indicator is then defined as
	$$
	\mathrm{HAS}(\mathcal{P}): =
	\max_{1 \le j \le |\mathcal{P}|-1} d_j.
	$$
	The HAS metric quantifies the absolute size of the largest gap between consecutive solutions,  with smaller value indicating a more evenly distributed approximation of the Pareto front.
	
	Table \ref{Table5} presents the computational performance of the \textbf{PDSM} in approximating the Pareto front of bi-objective problem~\eqref{bi-objective} using $\Lambda_{50}, \Lambda_{100}, \Lambda_{150}$, and $\Lambda_{200}$. As one would expect, we observe a downward trend in the HAS metric as $m$ increases, but at the cost of increased computational time.
	
	\begin{table}[]
		\centering
		\caption{Computational performance of the \textbf{PDSM} to approximate the Pareto front of bi-objective problem \eqref{bi-objective}.}\label{Table5}
		\resizebox{\textwidth}{!}{%
			\begin{tabular}{|lllllllllll|}
				\hline
				&\rule{0pt}{3ex}$m$ &  & Fun    &  & Sub   &  & HAS    &  & Time(s) &  \\ \cline{2-2} \cline{4-4} \cline{6-6} \cline{8-8} \cline{10-10}
				&\rule{0pt}{3ex}50  &  & 223395 &  & 10322 &  & 0.1585 &  & 16.32   &  \\
				& 100 &  & 452928 &  & 22865 &  & 0.0838 &  & 38.32   &  \\
				& 150 &  & 702250 &  & 33498 &  & 0.0638 &  & 59.85   &  \\
				& 200 &  & 905116 &  & 44133 &  & 0.0578 &  & 69.01   &  \\ \hline
			\end{tabular}
		}
	\end{table}
	
	\subsection{Data Clustering}
	
	For some $m\in\mathbb{N}$, let
	$
	\bm A=\{\bm a_1,\bm a_2,\ldots,\bm a_m\}\subset\mathbb{R}^{n}
	$
	be a finite set of data points. Given $q\in\mathbb{N}$, our goal is to partition $\bm A$ into $q$ clusters $\bm A_1,\bm A_2,\ldots,\bm A_q$ satisfying
	\begin{enumerate}
		\item[(i)] $\bm A_i\neq\emptyset$, for all $i$,
		\item[(ii)] $\bm A_i\cap\bm A_j=\emptyset$, for all $i\neq j$,
		\item[(iii)] $\bm A=\bigcup_{i=1}^{q}\bm A_i$.
	\end{enumerate}
	
	Each cluster $\bm A_i$ is associated with a center point, denoted by $\bm c_i$. A data point $\bm a\in\bm A$ is assigned to cluster $\bm A_j$ whenever
	\[
	\|\bm a-\bm c_j\|
	=
	\min_{i=1,\ldots,q}\|\bm a-\bm c_i\|.
	\]
	In addition, we require the cluster centers $\bm c_i$, $i=1,2,\ldots,q$, to lie in a polyhedral set $\mathcal P\subset\mathbb{R}^n$. Such a clustering problem can be formulated as the following optimization problem \cite{bagirov2020}:
	
	\begin{align}\label{Clustering-P}
		\begin{split}
			&\quad\quad\,\,\min\,\, f_q(\bm C)\\&
			\text{s.t.} \quad \bm C=[\bm c_1,\bm c_2,\ldots,\bm c_q]\in\mathbb{R}^{n\times q},\\&
			\qquad \,\, \bm c_i\in\mathcal{P}, \quad i=1,2,\ldots,q.
		\end{split}
	\end{align}
	where
	\[
	f_q(\bm C):=\frac{1}{m}\sum_{j=1}^{m}\min_{i=1,\ldots,q}\|\bm a_j-\bm c_i\|.
	\]
	
	For $q>1$, the objective function is nonsmooth and nonconvex, and the problem has $n \times q$ decision variables.
	
	To generate a test instance, we randomly sampled $m=10,\!000$ two dimensional data points ($n=2$) uniformly from the unit disk centered at the origin. Moreover, the cluster centers $\bm c_i$, $i=1,\ldots,q$, were required to lie in the regular octagon $\mathcal{P}\subset\mathbb{R}^2$ with vertices
	\[
	(x_i,y_i):=0.7(\cos\theta_i,\sin\theta_i),\qquad
	\theta_i:=\frac{\pi}{8}+i\frac{\pi}{4},\quad
	i=0,1,\ldots,7.
	\]

	\begin{figure}
		\centering
		
		\includegraphics[width=\textwidth]{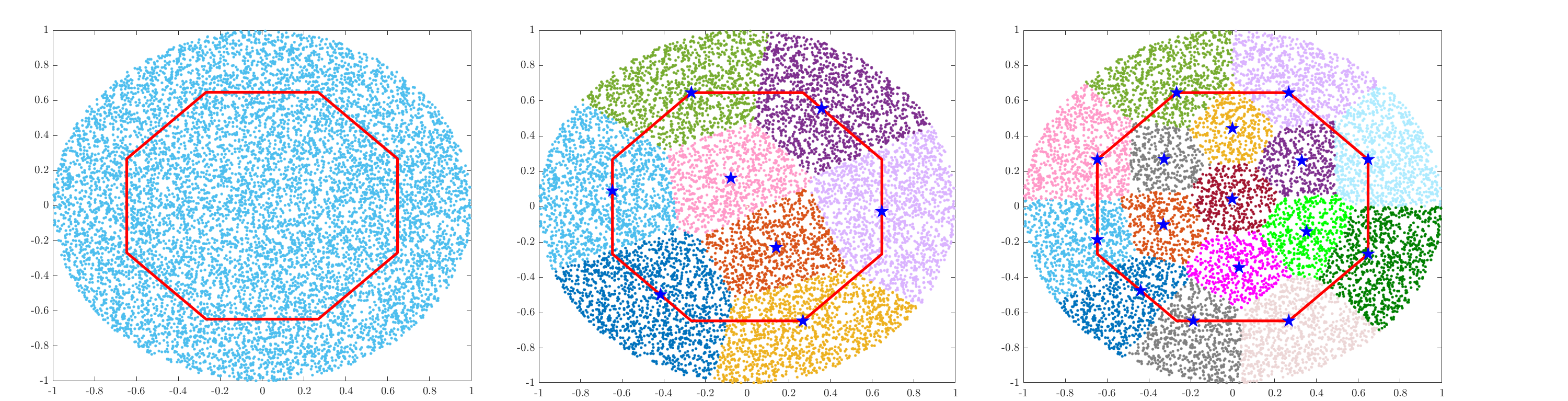}
		\caption{Left. the data set $\bm A$ and octagon $\mathcal{P}$. Middle. the obtained clusters $\bm A_i$ with the centers $\bm c_i$, for $q=8$. Right. the same for $q=16$. }\label{Fig4}
	\end{figure}
	
	The left plot of Figure~\ref{Fig4} illustrates the data set $\bm A$ together with the octagon $\mathcal{P}$. For $q=8$ and $16$, we then applied the proposed \textbf{PDSM} to  this instance of problem~\eqref{Clustering-P} using a randomly generated starting point and an optimality tolerance of $\tau=10^{-5}$. The resulting clusters $\bm A_i$, together with their corresponding centers, are shown in the middle and right plots of Figure~\ref{Fig4}. Moreover, Table~\ref{Table6} reports the computational performance of the \textbf{PDSM} on the clustering problem.

	\begin{table}[]
		\centering
		\caption{Computational performance of the \textbf{PDSM} to find center points $\bm c_i$, for $q=8$ and $16$.}\label{Table6}
		\resizebox{\textwidth}{!}{%
			\begin{tabular}{|lllllllllllllll|}
				\hline
				&\rule{0pt}{3ex}$q$ &  & Iter &  & Fun &  & Sub &  & $f_{best}$ &  & $v_f$ &  & Time(s) &  \\ \cline{2-2} \cline{4-4} \cline{6-6} \cline{8-8} \cline{10-10} \cline{12-12} \cline{14-14}
				&\rule{0pt}{3ex}8   &  & 82   &  & 741 &  & 65  &  & 0.0734   &  & 6E-6  &  & 4.84    &  \\
				& 16  &  & 96   &  & 358 &  & 79  &  & 0.0418   &  & 7E-6  &  & 3.86    &  \\ \hline
			\end{tabular}
		}
	\end{table}

	\section{Concluding Remarks}\label{Conclusion}
	We have developed a projected descent subgradient method for minimizing a weakly semismooth function over a closed and convex polyhedral set $C\subset\mathbb{R}^n$ and studied the global convergence behavior of the proposed method. Extending the method to a general closed and convex feasible set $C\subset\mathbb{R}^n$ is not straightforward. This is mainly due to the fact that, for a general closed and convex set, the projection operator $P_C:\mathbb{R}^n\to\mathbb{R}^n$ is not necessarily directionally differentiable, as shown by Kruskal~\cite{Kruskal1969}. Consequently, the weak semismoothness of $f$ cannot, in general, be inherited by the composite function $f\circ P_C$, which poses a challenge to establishing the finite convergence of Algorithm~\ref{projected-mifflin-line search}.
	
	Algorithm~\ref{main-alg} may be viewed as a basic framework that can be supplemented with several optional techniques. Instead of restarting the bundle of subgradients after each serious step, one may retain previously computed subgradients that still belong to the $\varepsilon$-subdifferential at the new iterate. The bundle can also be augmented using a subgradient sampling strategy, which may be effective in some situations. Although the effectiveness of this strategy has been problem-dependent in our observations, it can be a technique of choice for small-scale problems where subgradient evaluations are relatively inexpensive. If the number of consecutive null steps becomes large, storing the entire bundle may become impractical. In such cases, after solving subproblem~\eqref{least-norm}, the user may discard subgradients whose corresponding Lagrangian multipliers are sufficiently small.

\end{document}